\numberwithin{equation}{section}
\newtheorem{thm}{Theorem}[subsection]
\newtheorem{lem}[thm]{Lemma}
\newtheorem{prop}[thm]{Proposition}
\newtheorem{cor}[thm]{Corollary}
\theoremstyle{definition}
\newtheorem{definition}[thm]{Definition}
\newtheorem{rmk}[thm]{Remark}
\newcommand{\Z}{\mathbb{Z}}
\renewcommand{\k}{\mathbf{k}}
\newcommand{\cA}{\mathcal{A}}
\newcommand{\real}{\operatorname{real}}
\newcommand{\Pf}{\noindent {\it Proof}}
\newcommand{\id}{\operatorname{id}}
\newcommand{\ov}{\overline}
\newcommand{\WW}{{\mathcal W}}
\newcommand{\PP}{{\mathcal P}}
\newcommand{\Hom}{\operatorname{Hom}}
\newcommand{\homs}{\operatorname{hom}}
\newcommand{\Ext}{\operatorname{Ext}}
\newcommand{\End}{\operatorname{End}}
\newcommand{\Endlc}{\operatorname{end}}
\newcommand{\Aut}{\operatorname{Aut}}
\renewcommand{\a}{\alpha}
\renewcommand{\b}{\beta}
\newcommand{\De}{\Delta}
\newcommand{\C}{{\mathbb C}}
\newcommand{\La}{\Lambda}
\newcommand{\Ga}{\Gamma}
\newcommand{\wt}{\widetilde}
\newcommand{\ot}{\otimes}
\newcommand{\sub}{\subset}
\newcommand{\ed}{\qed\vspace{3mm}}
\renewcommand{\k}{\mathbf{k}}
\newcommand{\Id}{\operatorname{Id}}
\renewcommand{\mod}{\operatorname{mod}}
\newcommand{\Sym}{\operatorname{Sym}}
\newcommand{\DD}{{\mathcal D}}
\newcommand{\BB}{{\mathcal B}}
\newcommand{\G}{{\mathbb G}}
\newcommand{\hra}{\hookrightarrow}
\newcommand{\lan}{\langle}
\newcommand{\ran}{\rangle}
\newcommand{\CC}{{\mathcal C}}
\newcommand{\Spec}{\operatorname{Spec}}
\renewcommand{\P}{{\mathbb P}}
\newcommand{\si}{\sigma}
\newcommand{\ga}{\gamma}
\renewcommand{\ker}{\operatorname{ker}}
\newcommand{\im}{\operatorname{im}}
\newcommand{\Perf}{\operatorname{Perf}}
\newcommand{\w}{{\mathbf w}}
\title[Homological mirror symmetry for higher dimensional pairs of pants]{Homological mirror symmetry for higher dimensional pairs of pants}
\author{Yank\i\ Lekili}
\author{Alexander Polishchuk}
\address{King's College London}
\address{University of Oregon, National Research University Higher School of Economics, and
Korea Institute for Advanced Study}
\subjclass[2010]{Primary 14J33; Secondary 14F05, 53D37}
\keywords{Homological mirror symmetry, partially wrapped Fukaya category, symmetric products of surfaces, higher dimensional pairs-of-pants, modules over noncommutative orders}
\begin{document}

\begin{abstract}

Using Auroux's description of Fukaya categories of symmetric products of punctured surfaces, we compute the partially wrapped Fukaya category of the complement of $k+1$ generic hyperplanes in $\mathbb{CP}^n$, for $k \geq n$, with respect to certain stops in terms of the endomorphism algebra of a generating set of objects. The stops are chosen so that the resulting algebra is formal.
In the case of the complement of $(n+2)$-generic hyperplanes in $\mathbb{C}P^n$ ($n$-dimensional pair-of-pants), we show that our partial wrapped Fukaya category is equivalent to a certain categorical resolution of the derived category of the singular affine variety $x_1x_2..x_{n+1}=0$. By localizing, we deduce that the (fully) wrapped Fukaya category of the $n$-dimensional 
pair-of-pants is equivalent to the derived category of
$x_1x_2...x_{n+1}=0$. We also prove similar equivalences for finite abelian covers of the $n$-dimensional pair-of-pants.

\end{abstract}

\maketitle

\section{Introduction}

Originally homological mirror symmetry was conceived by Kontsevich as an equivalence of the 
Fukaya category of a compact symplectic manifold with the bounded derived category of coherent sheaves on a mirror
dual compact complex variety. Since then it grew into a vast program connecting Fukaya categories of several kinds associated with not necessarily compact symplectic manifolds with derived categories of several kinds attached to possibly singular
algebraic varieties. 

In \cite{LP}, we proved a version of homological mirror symmetry relating 
Fukaya categories of punctured Riemann surfaces with some derived categories attached to stacky nodal curves.
More precisely, for a punctured Riemann surface  $\Sigma$ and a line field $\eta$ on $\Sigma$, we choose a certain set of stops $\Lambda$, and consider the following sequence of pre-triangulated categories, related by quasi-functors 
\[ \mathcal{F}(\Sigma, \eta) \to \mathcal{W}(\Sigma, \Lambda, \eta) \to \mathcal{W}(\Sigma, \eta) \]
where $\mathcal{F}(\Sigma, \eta)$ is the compact Fukaya category (\cite{seidelbook}), $\mathcal{W}(\Sigma, \Lambda, \eta)$ is the partially wrapped Fukaya category (\cite{aurouxggt}, \cite{HKK}) , and $\mathcal{W}(\Sigma, \eta)$ is the (fully) wrapped Fukaya category (\cite{abouzseidel}). The first functor is full and faithful, and the second functor is a localization functor corresponding to dividing by the full subcategory of Lagrangians supported near $\Lambda$. 

On the mirror side, we consider a nodal stacky curve $C$ obtained by attaching copies of weighted projective lines at their orbifold points (see \cite{LP} for details), and we again have a sequence of categories
\[ \mathrm{Perf}(C) \to D^b(\mathcal{A}_C) \to D^b \mathrm{Coh}(C) \]
where $\mathcal{A}_C$ is a sheaf of algebras, called the Auslander order over $C$ that was previously studied in \cite{BurbanDrozd}. We again have that the first functor is full and faithful, and the second functor is a localization.  
The main result of \cite{LP} is an equivalence of homologically smooth and proper, pre-triangulated categories 
\begin{align}\label{ms} \mathcal{W}(\Sigma, \Lambda, \eta) \simeq D^b(\mathcal{A}_C). \end{align}
It is proved by constructing a generating set of objects on each side and matching their endomorphism algebras. 
The main point is that these algebras turn out to be formal (in fact, concentrated in degree $0$), which means that we
only need to prove an isomorphism of the usual associative algebras and do not have to worry about higher products.

One then deduces an equivalence $\mathcal{W}(\Sigma, \eta) \simeq D^b \mathrm{Coh}(C)$ by identifying the subcategories
on both sides of the equivalence \eqref{ms} with respect to which to take quotient.
The equivalence $\mathcal{F}(\Sigma, \eta) \simeq \mathrm{Perf}(C)$ is deduced by characterizing both sides as subcategories
of the two sides of \eqref{ms}. 
Note that considering the same generators in the localized categories leads to dg-algebras which are far from formal.
Note also that the embedding 
$\mathrm{Perf}(C)\hra D^b(\cA_C)$ is a simple example of categorical resolutions considered in \cite{Kuz-Lunts}. 

Let us explain this in more detail in a simple case. Let $\Sigma$ be the pair-of-pants, that is, a 3-punctured sphere, $\Lambda$ be 2 stops at the outer boundary as drawn in Figure \ref{pop}. We also choose a line field $\eta$ on $\Sigma$ which has rotation number 2 around the outer boundary and 0 along the interior boundary components (see \cite{LPgentle} for a recent detailed study of line fields on punctured surfaces). 

\begin{figure}[ht!]
\centering
\begin{tikzpicture}

\begin{scope}[scale=0.8]

\tikzset{
  with arrows/.style={
    decoration={ markings,
      mark=at position #1 with {\arrow{>}}
    }, postaction={decorate}
  }, with arrows/.default=2mm,
} 

\tikzset{vertex/.style = {style=circle,draw, fill,  minimum size = 2pt,inner        sep=1pt}}
\def \radius {1.5cm}

\foreach \s in {1,2,3,4,5,6,7,8,9,10} {
    \draw[thick] ([shift=({360/10*(\s)}:\radius-1.1cm)]-1,0) arc ({360/10 *(\s)}:{360/10*(\s+1)}:\radius-1.1cm);
    \draw[thick] ([shift=({360/10*(\s)}:\radius-1.1cm)]1,0) arc ({360/10 *(\s)}:{360/10*(\s+1)}:\radius-1.1cm);
   
   \draw[thick] ([shift=({360/10*(\s)}:\radius+1.5cm)]0,0) arc ({360/10 *(\s)}:{360/10*(\s+1)}:\radius+1.5cm);
}

\draw[with arrows] ([shift=({360/10*(7)}:\radius-1.1cm)]-1,0) arc ({360/10 *(7)}:{360/10*(5)}:\radius-1.1cm);
\draw[with arrows] ([shift=({360/10*(7)}:\radius-1.1cm)]1,0) arc ({360/10*(7)}:{360/10*(5)}:\radius-1.1cm);
   
\draw[with arrows]({360/10 * (3)}:\radius+1.5cm) arc ({360/10 *(3)}:{360/10*(4)}:\radius+1.5cm);

\draw [blue] ([shift=({360/10*(5)}:\radius-1.1cm)]-1,0) -- ([shift=({360/10*(5)}:\radius+1.5cm)]0,0);  
\draw [blue] ([shift=({360/10*(10)}:\radius-1.1cm)]1,0) -- ([shift=({360/10*(10)}:\radius+1.5cm)]0,0);

\draw [blue] ([shift=({360/10*(10)}:\radius-1.1cm)]-1,0) --  ([shift=({360/10*(5)}:\radius-1.1cm)]1,0);

\node[blue] at ([shift=({360/10*(4)}:\radius-1.1cm)]-2,0.05) {\small $L_{0}$}; 

\node[blue] at (0,0.3) {\small $L_{1}$};
\node[blue] at (2,0.3) {\small $L_{2}$};




\node[vertex] at  ([shift=({360/10*(7.5)}:\radius+1.5cm)]0,0) {};
\node[vertex] at  ([shift=({360/10*(2.5)}:\radius+1.5cm)]0,0) {};

\node at (-1,0.55){\tiny $u_1$};
\node at (-1,-0.6){\tiny $v_1$};

\node at (1,0.55){\tiny $u_2$};
\node at (1,-0.6){\tiny $v_2$};

\draw[purple] (-0.2, 3) to[in=180,out=270] (0,2.7);
\draw[purple] (0, 2.7) to[in=270,out=0] (0.2,3);

\draw[purple] (-0.2, -3) to[in=180,out=90] (0,-2.7);
\draw[purple] (0, -2.7) to[in=90,out=0] (0.2,-3);

\node[purple] at ([shift=({360/10*(7)}:\radius+1.5cm)]0.5,0.4) {\small $T_2$}; 
\node[purple] at ([shift=({360/10*(2)}:\radius+1.5cm)]-0.5,-0.4) {\small $T_1$};

\end{scope}

\end{tikzpicture}
    \caption{Pair-of-pants}
    \label{pop}
\end{figure}

The partially wrapped Fukaya category $\mathcal{W}(\Sigma, \Lambda, \eta)$ is generated by the Lagrangians $L_0, L_1, L_2$ drawn on Figure \ref{pop}, and their endomorphism algebra is easily computed to be given by the quiver with relations
on Figure \ref{quiverpop}.

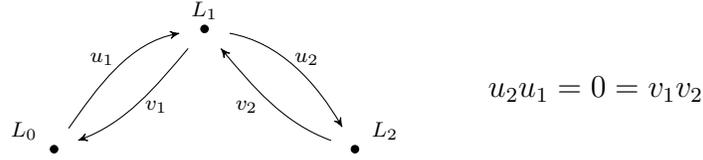
\begin{figure}[H]
\begin{tikzpicture}
    \tikzset{vertex/.style = {style=circle,draw, fill,  minimum size = 2pt,inner    sep=1pt}}

        \tikzset{edge/.style = {->,-stealth',shorten >=8pt, shorten <=8pt  }}

\begin{scope}[scale=0.8]
\node[vertex] (a) at  (0,0) {};
\node[vertex] (a1) at (2.5,2) {};
\node[vertex] (a3) at (5,0) {};

\node at (-0.5,0.3){\tiny $L_0$};
\node at (2.5,2.3) {\tiny $L_1$};
\node at (5.5,0.3) {\tiny $L_2$};

\node at (0.8, 1.5) {\tiny $u_1$};
\node at (1.7, 0.7) {\tiny $v_1$};
\node at (3.2, 0.7) {\tiny $v_2$};
\node at (4.2, 1.5) {\tiny $u_2$};


\draw[edge] (a)  to[out=55,in=190]  (a1);
\draw[edge] (a1)  to[out=230,in=20]  (a);
\draw[edge] (a1) to[out=-10, in=120] (a3);
\draw[edge] (a3) to[out=160, in=310] (a1);

\node at (9, 1) {$u_2 u_1 = 0 = v_1 v_2$};

\end{scope} 

\end{tikzpicture}
	\caption{Endomorphism algebra of a generating set}
\label{quiverpop}
\end{figure}

On the B-side, the mirror is given by the Auslander order $A$ over the node algebra
\[ R = \mathbf{k}[x_1,x_2] / (x_1 x_2), \]
where $\k$ is a commutative ring.               
The Auslander order in this case is simply,
\[ A = End_R ( R/(x_1) \oplus R/(x_2) \oplus R ). \] 
One can directly see that $A$ is isomorphic to the quiver algebra given in Figure \ref{quiverpop}.
Note that $R$ is a Cohen-Macaulay algebra and the modules $R$, $R/(x_1)$, $R/(x_2)$ 
                comprise the set of indecomposable maximal Cohen-Macaulay modules of $R$, 
              and they generate $D^b(A)$ as a triangulated category, provided $\k$ is regular.

Now, the wrapped Fukaya category $\mathcal{W}(\Sigma, \eta)$ is the localization of $\mathcal{W}(\Sigma, \Lambda, \eta)$ given by dividing out by the subcategory generated by the objects $T_1, T_2$ supported near the stops. We can express them in terms of $L_0, L_1, L_2$ as follows:
\[ T_1 \simeq \{ L_0 \xrightarrow{u_1} L_1 \xrightarrow{u_2} L_2 \} \] 
\[ T_2 \simeq \{ L_2 \xrightarrow{v_2} L_1 \xrightarrow{v_1} L_0 \} \] 
Similarly, $D^b(R)$ is the localization of $D^b(A)$ obtained by dividing out by the corresponding subcategory, and this allows one to establish an equivalence
\[ \mathcal{W}(\Sigma, \eta) \simeq D^b(R).\]

\subsection{New results} 

In this paper, we apply the above strategy to prove homological mirror symmetry for the higher-dimensional pair-of-pants,
\[ \mathcal{P}_n = \mathrm{Sym}^n ( \mathbb{P}^1 \setminus \{p_0, p_1, \ldots, p_{n+1}\} ), \] 
where $\mathrm{Sym^n}(\Sigma) = \Sigma^{n} / \mathfrak{S}_n$, 
see Section \ref{review} for a brief review of symplectic topology of these spaces. In other words, $n$-dimensional pair-of-pants is the complement of $(n+2)$ generic hyperplanes in $\mathbb{P}^n$.

On the A-side, we first introduce a stop $\Lambda = \Lambda_{1} \cup \Lambda_{2}$ where $\Lambda_{i} = \{ q_i \} \times \mathrm{Sym}^{n-1} (\Sigma)$ for some base points $q_1, q_2$. We pick a grading structure $\eta$ and 
consider the partially wrapped Fukaya $\mathcal{W}(\mathcal{P}_n, \Lambda, \eta)$, where we use 
some commutative ring $\mathbf{k}$ as coefficients. There is a natural
generating set of objects $\{ L_S : S \subset \{0,1,\ldots, n+1\}, |S|=n \}$ in this category
and our first result is an explicit computation of the algebra of morphisms between these objects. 
In fact, we do this more generally for the symplectic manifolds
\[ M_{n,k} = \mathrm{Sym}^n (\mathbb{P}^1 \setminus \{ p_0,p_1,\ldots, p_k \}), \]
see Theorem \ref{A-side-morphisms-general-thm} for the precise description of the resulting algebra.

Next, we specialize to the case $k=n+1$.
In this case, on the B-side we consider the categorical resolutions of the algebra
\[ R = \mathbf{k}[x_1,x_2,\ldots, x_{n+1}] / (x_1 x_2 \ldots x_{n+1}) \]
given by
\[ \BB^{\circ} :=\End_R(R/(x_1)\oplus R/(x_{[1,2]})\oplus \ldots \oplus R/(x_{[1,n]})\oplus R) \]
and
\[ \BB^{\circ\circ}:=\End_R(\bigoplus_{I\sub [1,n+1],I\neq\emptyset}R/(x_I)), \] 
where the summation is over all nonempty subintervals of $[1,n+1]$, and for $I \subset [1,n+1]$ we use the notation $x_{I} : = \prod_{i \in I} x_i$.  

By explicit computations we prove the following theorem (see Theorem \ref{main-thm}).

\begin{thm} 
There exists a grading structure $\eta$ on $\mathcal{P}_n$ such that we have an equivalence of pre-triangulated categories over $\mathbf{k}$, 
\[ \mathcal{W}(\mathcal{P}_n , \Lambda,\eta) \simeq \Perf(\BB^{\circ\circ}).  \]
\end{thm}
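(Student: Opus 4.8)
The plan is to follow the strategy of \cite{LP} recalled above: produce matching generating sets on the two sides, prove that the relevant endomorphism algebras are \emph{formal} — in fact concentrated in degree $0$ — identify them as one and the same ordinary associative $\mathbf{k}$-algebra given by a quiver with relations, and then upgrade this algebra isomorphism to the asserted equivalence of pre-triangulated categories by a Morita-type argument. On the A-side I would take as input Theorem \ref{A-side-morphisms-general-thm} specialized to $k = n+1$: it provides the generating collection $\{L_S : S \subset \{0,1,\ldots,n+1\},\ |S| = n\}$ of $\mathcal{W}(\mathcal{P}_n, \Lambda, \eta)$ together with an explicit presentation of the $A_\infty$-algebra $\bigoplus_{S,T}\homs(L_S, L_T)$; for the grading structure $\eta$ chosen as in the one-dimensional case of the Introduction, this algebra lives entirely in degree $0$, hence is automatically formal and is just an ordinary $\mathbf{k}$-algebra, which I will call $A_n$.

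The core of the argument is the B-side computation. I would compute $\BB^{\circ\circ} = \End_R\big(\bigoplus_{\emptyset\neq I\subset[1,n+1]} R/(x_I)\big)$, the sum over nonempty subintervals, completely explicitly. Since $x_1\cdots x_{n+1}$ is divisible by $x_I$ for every subinterval $I$, each summand is the cyclic module $R/(x_I) = \mathbf{k}[x_1,\ldots,x_{n+1}]/(x_I)$ — with $I = [1,n+1]$ recovering $R$ itself, as $x_1\cdots x_{n+1} = 0$ in $R$ — so a homomorphism $R/(x_I) \to R/(x_J)$ is multiplication by an element of the colon module $\big((x_J) :_R x_I\big)/(x_J)$, and the whole computation reduces to elementary colon-ideal bookkeeping in $R$. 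Carrying it out, I would obtain a monomial basis for each $\Hom$-space, read off generating arrows, determine the relations, and thereby present $\BB^{\circ\circ}$ as a quiver with relations whose $\binom{n+2}{2}$ vertices are indexed by the subintervals $I$. Under the bijection between these subintervals and the index sets $S$ — sending the two-element complement $\{0,\ldots,n+1\}\setminus S = \{i < j\}$ to $I = [i+1,j]$ — I would then check, vertex by vertex and arrow by arrow, that this quiver with relations coincides with the presentation of $A_n$ coming from Theorem \ref{A-side-morphisms-general-thm}; for $n = 1$ this specializes to the pair-of-pants picture of the Introduction with $\BB^{\circ\circ} = A$.

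Granting $A_n \cong \BB^{\circ\circ}$, the conclusion is formal: $\mathcal{W}(\mathcal{P}_n, \Lambda, \eta)$ is a pre-triangulated category generated by $\{L_S\}$ whose endomorphism $A_\infty$-algebra is $A_n$ concentrated in degree $0$, so it is equivalent to $\Perf$ of this algebra, i.e. to $\Perf(A_n) \simeq \Perf(\BB^{\circ\circ})$, by the standard identification of an idempotent-complete pre-triangulated category generated by a collection of objects with perfect modules over the endomorphism algebra of that collection (the equivalence sending each $L_S$ to the indecomposable projective $\BB^{\circ\circ}$-module indexed by the corresponding subinterval, and these projectives generate $\Perf(\BB^{\circ\circ})$). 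The two points I expect to be genuinely delicate are: (i) checking that the grading structure $\eta$ can be arranged so that the A-side morphism algebra really does collapse into degree $0$ — without this, formality fails and the whole approach breaks down; and (ii) the combinatorics of the B-side colon computations and the matching of relations, in particular confirming that the long monomial relations among paths passing through several vertices on the A-side correspond exactly to the relations forced on the B-side by the defining equation $x_1\cdots x_{n+1} = 0$ of $R$.
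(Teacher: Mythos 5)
Your proposal is correct and follows essentially the same route as the paper: Theorem \ref{A-side-morphisms-general-thm}(iv) gives the formality/degree-$0$ reduction (after shifting each $L_S$), Lemma \ref{mirror-lem} is exactly the claimed algebra isomorphism $\cA^{\circ\circ}\simeq\BB^{\circ\circ}$ under the bijection $\{0,\ldots,n+1\}\setminus S=\{i<j\}\leftrightarrow[i+1,j]$, carried out via the identification $\Hom_R(R/(x_J),R/(x_I))\simeq (x_{I\setminus J})/(x_I)$ and a case-by-case matching against the interval decomposition of Corollary \ref{pair-partition-cor}, and the equivalence then follows by generation. The two ``delicate points'' you flag are precisely the content of Theorem \ref{A-side-morphisms-general-thm}(iv) and of Lemma \ref{mirror-lem} (where the composition law is verified through the combinatorial identity \eqref{comb-triple-eq}), both already established in the paper.
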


We next analyze the localization of these categories corresponding to dividing out by objects supported near the stops to deduce homological mirror symmetry for the (fully) wrapped Fukaya categories:

\begin{cor} \label{popproof}  For the same grading structure $\eta$ we have
equivalences of pre-triangulated categories over $\mathbf{k}$: 
\[\mathcal{W}(\mathcal{P}_n, \Lambda_{1},\eta) \simeq \Perf(\BB^{\circ}).\]
Assume that $\mathbf{k}$ is a regular ring. Then we also have an equivalence                                                                         
\[\mathcal{W}(\mathcal{P}_n,\eta) \simeq D^b \mathrm{Coh}( x_1 x_2 \ldots x_{n+1}=0 ). \]
\end{cor}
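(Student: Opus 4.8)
The plan is to deduce the three stated equivalences from the main Theorem (the equivalence $\mathcal{W}(\mathcal{P}_n,\Lambda,\eta)\simeq\Perf(\BB^{\circ\circ})$) by identifying, on each side, the triangulated subcategory that one localizes by, and then invoking the compatibility of localizations with equivalences. First I would treat the passage $\Lambda=\Lambda_1\cup\Lambda_2 \rightsquigarrow \Lambda_1$: on the A-side, forgetting the stop $\Lambda_2$ is the localization of $\mathcal{W}(\mathcal{P}_n,\Lambda,\eta)$ by the subcategory generated by the linking Lagrangians supported near $\Lambda_2$. As in the surface case sketched in the introduction (the objects $T_1,T_2$), these linking objects can be written as explicit twisted complexes in the generators $L_S$; concretely, the ones for $\Lambda_2$ should be the Koszul-type complexes built from the morphisms ``$v$'' between the $L_S$'s. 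Under the equivalence with $\Perf(\BB^{\circ\circ})$, these complexes correspond to the summands $R/(x_I)$ for those intervals $I$ that are \emph{not} of the form $[1,j]$ — equivalently, passing from $\BB^{\circ\circ}$ to $\BB^{\circ}$ amounts to killing (as a quotient of the idempotent-completed category) exactly the idempotents corresponding to the ``extra'' maximal Cohen–Macaulay modules $R/(x_I)$ with $I$ not an initial interval. So I would: (i) write down these linking complexes explicitly; (ii) match them with the corresponding $R/(x_I)$ via the functor from the main Theorem; (iii) conclude $\mathcal{W}(\mathcal{P}_n,\Lambda_1,\eta)\simeq\Perf(\BB^{\circ})$ by noting both sides are the Verdier (or Drinfeld) quotient of the previous ones by matched subcategories, and that $\Perf(\BB^{\circ})$ is indeed such a quotient of $\Perf(\BB^{\circ\circ})$ — this last algebraic fact is the statement that $\BB^{\circ\circ}$ is a ``bigger'' Auslander-type order containing $\BB^{\circ}$ as a localization, which I expect has already been (or can quickly be) established by the explicit quiver description.

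Next I would handle the second localization, $\Lambda_1 \rightsquigarrow \emptyset$: removing the remaining stop $\Lambda_1$ realizes $\mathcal{W}(\mathcal{P}_n,\eta)$ as the localization of $\mathcal{W}(\mathcal{P}_n,\Lambda_1,\eta)$ by the subcategory generated by the linking objects near $\Lambda_1$ — these are the ``dual'' complexes built from the ``$u$'' morphisms. On the B-side, the parallel statement is that $D^b\mathrm{Coh}(R)$ (here $R=\mathbf{k}[x_1,\dots,x_{n+1}]/(x_1\cdots x_{n+1})$, so that $\Spec R$ is the union of coordinate hyperplanes $x_1x_2\cdots x_{n+1}=0$) is the localization of $D^b(\BB^{\circ})$ — equivalently, of $\Perf(\BB^{\circ})$ after suitable completion — by the subcategory generated by the objects that become acyclic after applying the projection functor $\BB^{\circ}\text{-}\mathrm{mod}\to R\text{-}\mathrm{mod}$. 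This is where the regularity hypothesis on $\mathbf{k}$ enters: when $\k$ is regular, $R$ has finite global dimension in the appropriate graded/relative sense, the modules $R,R/(x_1),R/(x_{[1,2]}),\dots,R/(x_{[1,n]})$ generate $D^b\mathrm{Coh}(R)$ (as already remarked in the introduction for $n=1$), and $\Perf(\BB^\circ)\simeq D^b(\BB^\circ)$, so the quotient is well-behaved; the statement that $D^b\mathrm{Coh}(R)$ is the Verdier quotient of $D^b(\BB^\circ)$ by the relevant subcategory is precisely the content of the categorical-resolution picture of \cite{Kuz-Lunts} applied to this order, and should be either quoted or checked directly from the quiver. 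Matching the generating objects of the kernel subcategory on the two sides — the ``$u$-Koszul'' linking Lagrangians with the modules killed by $\BB^\circ\to R$ — then yields $\mathcal{W}(\mathcal{P}_n,\eta)\simeq D^b\mathrm{Coh}(x_1\cdots x_{n+1}=0)$.

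The main obstacle I anticipate is step (i)–(ii) above: giving the \emph{correct} explicit twisted-complex representatives of the linking Lagrangians near $\Lambda_1$ and $\Lambda_2$ in terms of the generators $L_S$, and verifying that they match the intended $R/(x_I)$ summands on the nose (including gradings/signs, so that the grading structure $\eta$ fixed in the main Theorem is consistent throughout). In the surface case this is the one-line computation $T_1\simeq\{L_0\xrightarrow{u_1}L_1\xrightarrow{u_2}L_2\}$, $T_2\simeq\{L_2\xrightarrow{v_2}L_1\xrightarrow{v_1}L_0\}$; in dimension $n$ the analogous statement is a genuinely $n$-term (indeed, indexed over subintervals) computation, and while it is ``just'' linear algebra over the formal algebra $\BB^{\circ\circ}$ — recall the whole point is that this algebra is concentrated in degree $0$, so no higher products intervene — keeping track of which interval $I$ gives which linking object, and confirming surjectivity of the induced functor on quotients, is the technical heart. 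A secondary, more routine point is the verification that the Verdier quotient of $\Perf(\BB^{\circ\circ})$ by the subcategory generated by the matched modules is exactly $\Perf(\BB^{\circ})$, respectively that the further quotient is $D^b\mathrm{Coh}(R)$ under the regularity hypothesis; both are purely homological-algebra statements about the orders $\BB^{\circ\circ}\supset\BB^{\circ}\supset R$ and can be reduced to the explicit quivers, paralleling the $n=1$ discussion in the introduction.
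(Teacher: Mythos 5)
Your overall strategy matches the paper's: first establish the two-stop equivalence $\mathcal{W}(\mathcal{P}_n,\Lambda,\eta)\simeq\Perf(\BB^{\circ\circ})$, then identify on both sides the subcategories by which one localizes (linking objects near the stops on the A-side; kernels of restriction functors on the B-side), match them under the equivalence, and pass to quotients. Your point that the A-side linking objects are expressed as iterated cones built from the $u$- and $v$-morphisms between the $L_S$ (a higher analogue of the one-dimensional $T_1,T_2$ calculation) is exactly what the paper carries out in Proposition \ref{stop-resolutions-prop}.

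However, there is a genuine error in your description of what happens on the B-side. You assert that passing from $\Perf(\BB^{\circ\circ})$ to $\Perf(\BB^{\circ})$ ``amounts to killing the idempotents corresponding to the extra maximal Cohen--Macaulay modules $R/(x_I)$ with $I$ not an initial interval'', i.e.\ that the linking objects correspond to the projective summands $P_I$ for non-initial $I$. This is not correct. The restriction functor $r^{\BB^{\circ\circ}}_{\BB^{\circ}}(M)=\Hom_{\BB^{\circ\circ}}(\bigoplus_m P_{[1,m]},M)$ does \emph{not} annihilate those projectives: $\Hom_{\BB^{\circ\circ}}(P_{[1,m]},P_I)\simeq\Hom_R(R/(x_I),R/(x_{[1,m]}))$ is generically nonzero, so $P_I\notin\ker(r^{\BB^{\circ\circ}}_{\BB^{\circ}})$ even when $I$ is not an initial segment. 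The kernel is instead generated by the \emph{non-projective} modules $M\{I,J\}$, defined by the four-term exact sequence $0\to P_I\to P_{I\sqcup J}\to P_J\to M\{I,J\}\to 0$ (Lemma \ref{M-mod-lem}); Theorem \ref{B-loc-thm} shows that $\ker(r^{\BB^{\circ\circ}}_{\BB^{\circ}})$ is generated by the $M\{[i],[i+1,j]\}$, and $\ker(r^{\BB^{\circ\circ}}_{R})$ by the $M\{[i],[i+1,j]\}$ and $M\{[j],[i,j-1]\}$. And indeed these three-term complexes of projectives are precisely what the A-side resolutions of $T_2\times X$ (resp.\ $T_1\times X$) transport to. So the ``idempotent-killing'' picture would produce the wrong quotient; the correct mechanism is Verdier localization by a subcategory of acyclics built from mapping cones, and identifying these generators requires the semiorthogonal decompositions of Proposition \ref{B-semiorth-dec-prop} and Theorem \ref{A-semiorth-dec-thm}, which you somewhat underestimate as a ``routine'' quiver check.

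One smaller point: the regularity of $\mathbf{k}$ does not give $\Perf(\BB^{\circ})\simeq D^b(\BB^{\circ})$ as you suggest (the order $\BB^{\circ}$ has infinite global dimension over $R$ in the relevant sense); what regularity does buy, as in Theorem \ref{B-loc-thm}(ii), is that the localization can be stated at the level of bounded derived categories of finitely generated modules, with the abelian-level colocalization (via the cosection $i^{\BB^{\circ\circ}}_R$) descending to a derived-level equivalence by Miyachi's theorem.
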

                                                                         
Finally, we deduce similar equivalences for finite abelian covers of $\PP_n$, associated with homomorphisms
$\pi_1(\PP_n)\simeq \Z^{n+1}\to\Gamma$ into finite abelian groups $\Gamma$. On the B-side we take equivariant
versions of the above categories with respect to the natural actions of the dual finite commutative group 
                                                                         scheme $G=\Gamma^*$
(see Theorem \ref{abelian-cover-thm}).                                                                         

Note that to prove the homological mirror symmetry statement (the second equivalence of Corollary \ref{popproof}), it is enough to work with one stop $\Lambda_1$.
The reason we consider the picture with two stops is due to the relation with the Auslander order discussed above and to Ozsv\'ath-Szab\'o's bordered algebras 
(see also Remarks \ref{1dim-coverings-rem} and \ref{OS-rem}).                                                                         
                                                                         
\subsection{Relation to other works} 

Homological mirror symmetry for pair-of-pants is a much studied subject. However, a complete proof of Corollary \ref{popproof} has not appeared in writing until this paper. A motivation for studying these particular examples of mirror symmetry comes from a theorem of Mikhalkin \cite{mikhalkin} that a hypersurface in $\mathbb{C}P^{n+1}$ admits a decomposition into several $\mathcal{P}_n$, much like a Riemann surface admits a decomposition into several $\mathcal{P}_1$. 

In \cite{sheridan}, Sheridan identifies the mirror (immersed) Lagrangian in $\mathcal{P}_n$ corresponding to $\mathcal{O}_0$, the structure sheaf of the origin in the triangulated category of singularities of the normal crossing divisor $x_0x_1x_2\ldots x_{n+1}=0$ in $\mathbb{C}^{n+2}$. 
By a theorem of Orlov \cite{Orlov2004}, the latter category is quasi-equivalent to the matrix factorization category $\mathrm{mf}(\mathbb{C}^{n+2}, x_0x_1x_2 \ldots x_{n+1})$. Note that by the result of Isik \cite{Isik}, the latter category (or more precisely, its $\mathbb{G}_m$-equivariant version where $x_1,..,x_{n+1}$ have weight 0, and $x_{0}$ has weight 2) is naturally quasi-equivalent to the derived category of 
$x_1x_2\ldots x_{n+1}=0$ (see also \cite{Shipman}). 
Under this equivalence, $\mathcal{O}_0$ corresponds to a perfect object supported at the origin. 

In \cite{nadler} instead of wrapped Fukaya category of $\mathcal{P}_n$, Nadler studies the $\mathbb{Z}_2$-graded category of wrapped microlocal sheaves associated to a skeleton of $\mathcal{P}_n$ (see also the follow-up paper by Gammage and Nadler \cite{GamNad}). It is then verified that this category agrees with the $\mathbb{Z}_2$-graded category of matrix factorizations $\mathrm{mf}(\mathbb{C}^{n+2},x_0x_1x_2\ldots x_{n+1})$. It is expected and in certain cases proved that wrapped microlocal sheaves category is equivalent to wrapped Fukaya category - see \cite{GPS3} for the case of cotangent bundles. However, such an equivalence for arbitrary Weinstein manifold is not yet accomplished. Nonetheless, in view of the works of Ganatra-Pardon-Shende \cite{GPS1}, \cite{GPS2},\cite{GPS3} such an equivalence in the case of $\mathcal{P}_n$ seems to be within reach (see in particular the discussion in \cite[Section 6.6]{GPS3} which outlines a proof depending on a work-in-progress). Establishing such an equivalence for $\mathcal{P}_n$ would give another confirmation for Corollary \ref{popproof} (at least, in the $\mathbb{Z}_2$-graded case).

The closest to our work is \cite{aurouxspec}, where Auroux sketches a proof of homological mirror symmetry for $\mathcal{P}_n$ which depends on certain conjectures about generation by an explicit collection of Lagrangians and a classification of 
$A_\infty$-structures on their cohomology.

We also mention that partially wrapped Fukaya categories of symmetric products appear predominantly in Heegaard Floer homology \cite{LOT}, \cite{aurouxggt}, \cite{aurouxicm} (see also \cite{LPer}). In particular, our computations of $\mathcal{W}(M_{n,k})$ give an alternative viewpoint for knot Floer homology. We defer this to a future work.

The paper is organized as follows. After reviewing some background material on partially wrapped Fukaya categories
of symmetric powers of Riemann surfaces in Sec.\ \ref{review}, we present the computation of the algebra of morphisms between
generating Lagrangians in $M_{n,k}$ in Sec.\ \ref{A-sec}. Then in Sec.\ \ref{B-sec} we deal with the B-side of the story:
we study the derived categories of modules over $\BB^{\circ}$ and $\BB^{\circ\circ}$. In particular,
we construct semiorthogonal decompositions of these categories and obtain the localization results similar to those
corresponding to the removal of a stop on the A-side.
Finally, in Sec.\ \ref{comparison-sec} we prove the equivalences of categories on the A-side and on the B-side
(see Theorem \ref{main-thm}).

\medskip
                                                                         
\noindent 
{\it Conventions}. We work over a base commutative ring $\mathbf{k}$.
When we write complexes of modules in the form $[\ldots \to \cdot]$, we assume that the rightmost
term sits in degree $0$. The morphism complexes in dg-categories are denoted by lowercase $\mathrm{hom}$,
while their cohomology are denote by $\Hom^\bullet$. We also abbreviate $\Hom^0$ simply as $\Hom$. 
The wrapped Fukaya category we consider is defined as the split-closed pre-triangulated envelope of the category whose objects are graded exact Lagrangians.

\medskip
                                                                         
\noindent
{\it Acknowledgments}. A.P. is grateful to Victor Ostrik for useful discussions.
Y.L. is partially supported by the Royal Society (URF). A.P. is partially supported by the NSF grant DMS-1700642, 
by the National Center of Competence in Research ``SwissMAP - The Mathematics of Physics'' of the Swiss National Science Foundation, and by the Russian Academic Excellence Project `5-100'.
While working on this project, A.P. was visiting Korea Institute for Advanced Study and ETH Zurich. 
He would like to thank these institutions for hospitality and excellent working conditions.

\section{A brief review of Fukaya categories of symmetric products of Riemann surfaces}
\label{review}
Let $\Sigma$ be a Riemann surface, for each $n>0$, there exists a smooth $n$-dimensional complex algebraic variety 
\begin{align} 
	\Sym^n (\Sigma) := \Sigma^{n} / \mathfrak{S}_n
\end{align}
where $\mathfrak{S}_n$ is the permutation group which acts by permuting the components of the product. 

Let $\pi : \Sigma^{n} \to \Sym^n (\Sigma)$ be the branched covering map. Fix an area form $\omega$ on $\Sigma$. In \cite[Section 7]{perutz}, Perutz explains how to smoothen the closed current $\pi_*(\omega^{\times n})$ on $\Sym^n(\Sigma)$ to a K\"ahler form $\Omega$ by modifying it in an arbitrarily small analytic neighborhood of the (big) diagonal. In particular, outside this neighborhood we have $\Omega = \pi_* (\omega^{\times n})$. 
Throughout, we will view $\Sym^n(\Sigma)$ as a symplectic manifold equipped with such a K\"ahler form $\Omega$. 

If we write $g = g(\Sigma)$ for the genus of $\Sigma$, the first Chern class of such a variety is given by 
\[ c_1( \Sym^n(\Sigma)) = (n+1-g) \eta  - \theta \]
where $\eta$ and $\theta$ are the Poincar\'e duals of the class $\{pt\} \times \Sym^{n-1}(\Sigma)$ and the theta divisor, respectively. These two cohomology classes span the invariant part of $H^2(\Sym^n(\Sigma))$ under the action of the mapping class group of $\Sigma$. Moreover, we have that $[\Omega]= \eta$.

In particular, when \begin{align} \Sigma = \P^1 \setminus \{p_0, p_1,\ldots, p_k\} \end{align} $\Sym^n(\Sigma)$ is an exact symplectic manifold with $c_1=0$. Such symplectic manifolds are sometimes referred to as symplectically Calabi-Yau manifolds, and their Fukaya categories can be $\Z$-graded \cite{seidelgraded}. From the point of view of symplectic topology, the positions of the points do not matter, so let us introduce the notation
	\begin{align}
		M_{n,k} = \Sym^{n}(\P^1 \setminus \{p_0, p_1,\ldots, p_k\}) 
	\end{align}
to denote the exact symplectic manifold with $c_1=0$. The grading structures are given by homotopy classes of trivializations 
of the bicanonical bundle, and there is an effective $H^1(\Sym^n(\Sigma)) \simeq H^1(\Sigma)$ worth of choices.

Recall the well-known isomorphism of algebraic varieties
\begin{align}
     \Sym^n ( \P^1) \simeq \P^n 
\end{align}
given by sending 
an effective divisor of degree $n$ on $\P^1$ to its homogeneous equation defined up to rescaling.

Therefore, one can think of $M_{n,k}$ as the complement of $k+1$ generic hyperplanes in $\P^n$. This provides an alternative way to equip $M_{n,k}$ with a symplectic structure by viewing it as an affine variety but we will not pursue this any further, as we prefer to emphasize the structure of $M_{n,k}$ as a symmetric product on a punctured genus 0 surface. The two symplectic structures are equivalent as they both tame the standard complex structure $J=\mathrm{Sym}^n(j)$ on $M_{n,k}$ induced from $\mathbb{P}^n$ (see \cite[Prop. 1.1]{perutz}). This also makes it clear that for $0 \leq k < n$, $M_{n,k} = \mathbb{C}^{n-k} \times \mathbb{(C^*)}^{k}$ which is a subcritical Stein manifold, so our main interest will be for $k \geq n$. 

We will also equip $M_{n,k}$ with stops $\Lambda_{Z}$ corresponding to choice of symplectic hypersurfaces of the form $\{p\} \times \Sym^{n-1}(\Sigma)$ for $p \in Z$, where $Z$ is finite set of points. The set $Z$ will be indicated by choosing stops in the ideal boundary of $\Sigma$. More precisely, by removing cylindrical ends, we view $\Sigma$ as a 2-dimensional surface with boundary and the set $Z$ will be chosen as a finite set of points on $\partial \Sigma$. 

We write $\mathcal{W} (M_{n,k}, \Lambda_Z)$ for the partially wrapped Fukaya
category. Motivated by bordered Heegaard Floer homology \cite{LOT}, these categories were originally constructed by Auroux in the papers
\cite{aurouxggt}, \cite{aurouxicm}. These works provide foundational results on these categories,
as well as some very useful results about generating objects and
existence of certain exact triangles. 

All of the Lagrangians that we use will be of the form $L_1 \times L_2 \times \ldots L_n$ where $L_i \subset \Sigma$ are pairwise disjoint Lagrangian arcs in $\Sigma$, which can be considered as objects in $\mathcal{W}(\Sigma, Z)$. 
Auroux proves in \cite[Theorem 1]{aurouxicm} that given a set of Lagrangians $L_0, L_1,\ldots, L_k$ such that their complement in $\Sigma$ is a disjoint union of disks with at most 1 stop in their boundary then for $1 \leq n \leq (k+1)$, the corresponding partially wrapped Fukaya category of $\Sym^n(\Sigma)$ is generated by $k+1 \choose n$ product Lagrangians $L_{i_1} \times \ldots L_{i_n}$ where $(i_1,\ldots, i_n)$ runs through size $n$ subsets of $\{0,1,\ldots, k\}$. Notice that this generation result only depends on the configuration of $L_i$ on $\Sigma$ and is independent of $n$.

Furthermore, Auroux explains how to compute the dg-endomorphism algebra for such product Lagrangians 
(see \cite[Prop.\ 11]{aurouxicm}; note that there are no higher products). As vector spaces, the morphism spaces are defined by
\begin{align*} &\mathrm{hom}(L_{i_1} \times L_{i_2} \times \ldots \times L_{i_n}, L_{j_1} \times L_{j_2} \times \ldots \times L_{j_n}) \\ &= \bigoplus_{\sigma} \mathrm{hom}(L_{i_1}, L_{\sigma(i_1)}) \otimes \mathrm{hom}(L_{i_2}, L_{\sigma(i_2)}) \otimes \ldots \otimes \mathrm{hom}(L_{i_n}, L_{\sigma(i_n)})  \end{align*}
where $\sigma$ runs through bijections $\{i_1,\ldots, i_n\}\to\{j_1, \ldots, j_n\}$.

Following \cite{LOT}, we can represent these generators via strand diagrams as follows. 
First, the endpoints of the Lagrangians $L_{i_1},\ldots, L_{i_n}$, $L_{j_1},\ldots, L_{j_n}$ are grouped into equivalence classes according to which boundary component of $\Sigma$ they end on. Note that the set of endpoints lying on each boundary component has a cyclic order induced by the orientation of the boundary. Thus, given a morphism $(f_1,\ldots f_n)$, we can assume that it is of the form $(f_{i_1,1}, \ldots, f_{i_{r_1},1},f_{i_1,2}, \ldots, f_{i_{r_2},2} \ldots, f_{i_1,k},\ldots, f_{i_{r_k},k})$
where $f_{i_1,s},\ldots, f_{i_{r_s},s}$ are Reeb chords along the $s^{th}$ boundary component 
$\partial\Sigma_s\sub \partial\Sigma$, where the Reeb flow is simply the rotation along the orientation of the boundary. 
Thus, 
each $f_{i_j,s}$ either represents the idempotent of the corresponding Lagrangian $L_{i_j,s}$ or goes in the strictly positive direction along $\partial\Sigma_s$. Thus, the set of Reeb chords  $f_{i_1,s},\ldots, f_{i_{r_s},s}$ can be represented in $\mathbb{R} \times [0,1]$ as upward veering strands from $\mathbb{R} \times \{0\}$ to $\mathbb{R} \times \{1\}$, or as a straight horizontal line if it corresponds to an idempotent. Here, $\mathbb{R}$ is the universal cover of the component $\partial\Sigma_s$
 and $[0,1]$ is the time direction, see Figure \ref{figure0}.

\begin{figure}[H]
\centering
\begin{tikzpicture}

\tikzset{
  with arrows/.style={
    decoration={ markings,
      mark=at position #1 with {\arrow{>}}
    }, postaction={decorate}
  }, with arrows/.default=2mm,
} 

\tikzset{vertex/.style = {style=circle,draw, fill,  minimum size = 2pt,inner        sep=1pt}}
\def \radius {1.5cm}

\draw[with arrows] (0,0) -- (0,2); 
\draw[with arrows] (1,0) -- (1,2); 

\draw (0,0.8) to[in=180, out=0] (1, 1.8);
\draw (0,0.3) to[in=180, out=0] (1, 1.3);

\begin{scope}[xshift=2cm]
\draw[with arrows] (0,0) -- (0,2); 
\draw[with arrows] (1,0) -- (1,2); 

\draw (0,0.8) to[in=180, out=0] (1, 1.8);
\draw (0,0.3) to[in=180, out=0] (1, 0.3);
\end{scope}

\begin{scope}[xshift=4cm]
\draw[with arrows] (0,0) -- (0,2); 
\draw[with arrows] (1,0) -- (1,2); 

\draw (0,1) to[in=180, out=0] (1, 1.8);
\draw (0,0.3) to[in=180, out=0] (1, 1.5);
\draw (0,0.5) to[in=180, out=0] (1, 1);
\end{scope}

\end{tikzpicture}
    \caption{A strand diagram with 3 boundary components}
    \label{figure0}
\end{figure}

In the case when a boundary component contains stops
the Reeb chords are not allowed to pass through the stops.
Hence, instead of using the universal cover $\mathbb{R}$, one cuts along the stops and uses the subintervals to draw the strand diagram. We do not elaborate on the notation to describe this.

Now, the product in $\mathcal{W}(M_{n,k}, \Lambda_Z)$ is induced by the composition in $\mathcal{W}(\Sigma, Z)$. Namely, we have
\[ (f_1 \otimes \ldots \otimes f_n) \circ (g_1 \otimes \ldots \otimes g_n) = (f_1 g_{\sigma(1)} \otimes f_2 g_{\sigma(2)} \otimes \ldots \otimes f_n g_{\sigma(n)}) \]
if there exists a $\sigma \in \mathfrak{S}_n$ such that all the compositions $f_i g_{\sigma(i)}$ in $\mathcal{W}(\Sigma, Z)$ are non-zero, and with the additional important condition that in the strand representation no two strands of the concatenated diagram cross more than once; otherwise the product is set to be zero, see Figure \ref{notallow}
\begin{figure}[ht!]
\centering
\begin{tikzpicture}

\tikzset{
  with arrows/.style={
    decoration={ markings,
      mark=at position #1 with {\arrow{>}}
    }, postaction={decorate}
  }, with arrows/.default=2mm,
} 

\tikzset{vertex/.style = {style=circle,draw, fill,  minimum size = 2pt,inner        sep=1pt}}
\def \radius {1.5cm}

\draw[with arrows] (0,0) -- (0,2); 
\draw[with arrows] (1,0) -- (1,2); 

\draw (0,0.3) to[in=200, out=0] (0.6, 1.4);
\draw (0,0.5) to[in=240, out=0] (0.6, 1);
\draw (0.6,1.4) to[in=180, out=20] (1, 1.5);
\draw (0.6,1) to[in=180, out=60] (1, 1.8);

\end{tikzpicture}
    \caption{A strand diagram with two strands crossing more than once.}
    \label{notallow}
\end{figure}

The differential on the space of morphisms is defined as the sum of all the ways of resolving one
crossing of the strand diagram excluding resolutions in which two strands intersect
twice, see Figure \ref{diffex}.

\begin{figure}[ht!]
\centering
\begin{tikzpicture}

\tikzset{
  with arrows/.style={
    decoration={ markings,
      mark=at position #1 with {\arrow{>}}
    }, postaction={decorate}
  }, with arrows/.default=2mm,
} 

\tikzset{vertex/.style = {style=circle,draw, fill,  minimum size = 2pt,inner        sep=1pt}}
\def \radius {1.5cm}

\draw[with arrows] (0,0) -- (0,2); 
\draw[with arrows] (1,0) -- (1,2); 

\draw (0,0.8) to[in=180, out=0] (1, 1.3);
\draw (0,0.3) to[in=180, out=0] (1, 1.8);

\draw[->] (1.2, 1) -- (1.6,1);
\node at (1.4,1.2) {$\partial$};

\begin{scope}[xshift=2cm]
\draw[with arrows] (0,0) -- (0,2); 
\draw[with arrows] (1,0) -- (1,2); 

\draw (0,0.8) to[in=180, out=0] (1, 1.8);
\draw (0,0.3) to[in=180, out=0] (1, 1.3);
\end{scope}

\end{tikzpicture}
    \caption{Resolution of strand diagram}
    \label{diffex}
\end{figure}

In this way we get an explicit dg-category, quasi-equivalent to $\WW(M_{n,k},\Lambda_Z)$.
In what follows, we use these results without further explanations. 

We can choose a line field to give $\mathcal{W}(\Sigma, Z)$ a $\mathbb{Z}$-grading (see \cite{LPgentle} for a recent study of this structure). There are effectively $H^1(\Sigma)$ worth of choices for the line field. The set of grading structures for $M_{n,k}$ is a torsor for an isomorphic group $H^1(\Sym^n(\Sigma)) \cong H^1(\Sigma)$. 
However, the relation between grading structures on $\Sigma$ and on $\Sym^n(\Sigma)$ seems to be quite subtle:
it is easy to see that the grading of a morphism $(f_1 \otimes \ldots f_n)$ in $\mathcal{W}(M_{n,k}, \Lambda_Z)$ cannot be given by the sum of the gradings of morphisms $f_i$ in $\mathcal{W}(\Sigma, Z)$. For example, we will encounter objects $L_1, L_2$ and morphisms $u \in \homs(L_1, L_2)$ and $v \in \homs(L_2, L_1)$ such that \[ \partial(\id_{L_1} \otimes uv) = \partial (vu \otimes \id_{L_2}) = v \otimes u \in \homs(L_1 \times L_2, L_1 \times L_2). \]
This makes direct determination of the gradings in $\mathcal{W}(M_{n,k}, \Lambda_Z)$ difficult. 
Instead, we are able to pin down the grading structures on $M_{n,k}$ using an explicit calculation of the endomorphism
algebra of a generating set of Lagrangians. 


Finally, we recall a basic exact triangle from \cite[Lemma 5.2]{aurouxggt}. Let us consider the Lagrangians
$L = L_1 \times L_2 \times \ldots \times L_n$, $L' = L'_1 \times L_2 \times \ldots L_n$, and 
$L'' = L''_1 \times L_2 \times \ldots L_n$, where $L''_1$ is the arc obtained by sliding $L_1$ along $L'_1$. Then $L$, $L'$ and $L''$ fit into an exact triangle
\begin{equation} \label{exacttri}
L\rTo{u\otimes\id} L'\to L''\to L[1]
\end{equation}
coming from an exact triangle
$$L_1\rTo{u} L'_1\to L''_1\to L_1[1]$$
in $\mathcal{W}(\Sigma, Z)$ 

Similarly, if $L= L_1 \times L_2 \times \ldots L_n$ and $L' = L_1' \times L_2 \times \ldots L_n$, where $L_1'$ is obtained by 
sliding $L_1$ along $L_2$, then $L$ and $L'$ are isomorphic in the category $\mathcal{W}(M_{n,k}, \Lambda_Z)$. 
Indeed, in this situation one can show that $L$ and $L'$ are Hamiltonian isotopic (see \cite{perutz}, \cite{aurouxggt}).

\section{A-side}\label{A-sec}

Throughout, we will work over a commutative ring $\mathbf{k}$. We consider the sphere $\Sigma_{k}$ with $(k+1)$ holes and 2 stops $Z = q_1 \cup q_2$ on one of the boundary components. We have a generating set of Lagrangians $L_{0}$, $L_{1}, \ldots, L_{k}$, which connect $i^{th}$ hole to $(i+1)^{th}$ hole for $i \in \mathbb{Z}/(k+1)$, see Figure \ref{puncsphere} for $k=3$. As in Figure \ref{puncsphere}, we view $\Sigma_k$ as a $k$-holed disk. We call the punctures that lie in the interior of the disk the \emph{interior punctures} of $\Sigma_k$ and label them with $1,2,\ldots, k$ from left to right. We call the unique puncture that corresponds to the boundary of the disk, the \emph{exterior puncture} of $\Sigma_k$ and label it with 0. 

Let $M_{n,k} = \Sym^{n}(\Sigma_{k})$ and $\La=\Lambda_Z = \Lambda_{1} \cup \Lambda_{2}$ be the corresponding stops. Thus, $\Lambda_{i} = q_i \times \Sym^{n-1}(\Sigma_{k})$ are symplectic hypersurfaces in $M_{n,k}$.

The objects $L_0,\ldots,L_k$ generate the partially wrapped Fukaya category $\WW(\Sigma,Z)$. 
Furthermore, by Auroux's theorem \cite[Theorem 1]{aurouxicm}, the category
$\mathcal{W}(M_{n,k}, \Lambda)$ is generated by the Lagrangians 
\[ L_S = L_{i_1} \times L_{i_2} \times \cdots \times L_{i_n} \]
where $i_j \in S$ and $S$ is a subset of $[0,k]$ of size $n$.

Below we are going to describe the algebra 
\begin{align} \mathcal{A}^{\circ\circ} = \bigoplus_{S, S'} \Hom_{\mathcal{W}(M_{n,k}, \Lambda)} (L_S, L_{S'}) \end{align}
It will turn out that for $n<k$, $\cA^{\circ\circ}$ is in fact an $R$-algebra, where
\begin{align} 
R = \mathbf{k}[x_1,\ldots, x_k] /(x_1\ldots x_k).
\end{align} 
Here $x_i$ will correspond to the closed Reeb orbit around the $i^{th}$ interior puncture of $\Sigma_{k}$. 


\begin{figure}[ht!]
\centering
\begin{tikzpicture}

\tikzset{
  with arrows/.style={
    decoration={ markings,
      mark=at position #1 with {\arrow{>}}
    }, postaction={decorate}
  }, with arrows/.default=2mm,
} 

\tikzset{vertex/.style = {style=circle,draw, fill,  minimum size = 2pt,inner        sep=1pt}}
\def \radius {1.5cm}

\foreach \s in {1,2,3,4,5,6,7,8,9,10} {
   \draw[thick] ([shift=({360/10*(\s)}:\radius-1.1cm)]0,0) arc ({360/10 *(\s)}:{360/10*(\s+1)}:\radius-1.1cm);
    \draw[thick] ([shift=({360/10*(\s)}:\radius-1.1cm)]-1.5,0) arc ({360/10 *(\s)}:{360/10*(\s+1)}:\radius-1.1cm);
    \draw[thick] ([shift=({360/10*(\s)}:\radius-1.1cm)]1.5,0) arc ({360/10 *(\s)}:{360/10*(\s+1)}:\radius-1.1cm);
   
   \draw[thick] ([shift=({360/10*(\s)}:\radius+1.5cm)]0,0) arc ({360/10 *(\s)}:{360/10*(\s+1)}:\radius+1.5cm);
}

\draw[with arrows] ([shift=({360/10*(7)}:\radius-1.1cm)]0,0) arc ({360/10*(7)}:{360/10*(6)}:\radius-1.1cm);
\draw[with arrows] ([shift=({360/10*(7)}:\radius-1.1cm)]-1.5,0) arc ({360/10 *(7)}:{360/10*(6)}:\radius-1.1cm);
\draw[with arrows] ([shift=({360/10*(7)}:\radius-1.1cm)]1.5,0) arc ({360/10*(7)}:{360/10*(6)}:\radius-1.1cm);
   
\draw[with arrows]({360/10 * (3)}:\radius+1.5cm) arc ({360/10 *(3)}:{360/10*(4)}:\radius+1.5cm);

\draw [blue] ([shift=({360/10*(5)}:\radius-1.1cm)]-1.5,0) -- ([shift=({360/10*(5)}:\radius+1.5cm)]0,0);  
\draw [blue] ([shift=({360/10*(10)}:\radius-1.1cm)]1.5,0) -- ([shift=({360/10*(10)}:\radius+1.5cm)]0,0);

\draw [blue] ([shift=({360/10*(10)}:\radius-1.1cm)]-1.5,0) --  ([shift=({360/10*(5)}:\radius-1.1cm)]0,0); 
\draw [blue] ([shift=({360/10*(10)}:\radius-1.1cm)]0,0) -- ([shift=({360/10*(5)}:\radius-1.1cm)]1.5,0);  

\node[blue] at ([shift=({360/10*(4)}:\radius-1.1cm)]-2,0.05) {\small $L_{0}$}; 
\node[blue] at ([shift=({360/10*(10)}:\radius+0.7cm)]0.3,0.25) {\small $L_{3}$}; 

\node[blue] at (-0.8,0.3) {\small $L_{1}$};
\node[blue] at (0.7,0.3) {\small $L_{2}$};

\draw[purple] (-0.2, 3) to[in=180,out=270] (0,2.7);
\draw[purple] (0, 2.7) to[in=270,out=0] (0.2,3);

\draw[purple] (-0.2, -3) to[in=180,out=90] (0,-2.7);
\draw[purple] (0, -2.7) to[in=90,out=0] (0.2,-3);

\node[purple] at ([shift=({360/10*(7)}:\radius+1.5cm)]0.5,0.4) {\small $T_2$}; 
\node[purple] at ([shift=({360/10*(2)}:\radius+1.5cm)]-0.5,-0.4) {\small $T_1$};

\node[vertex] at  ([shift=({360/10*(7.5)}:\radius+1.5cm)]0,0) {};
\node[vertex] at  ([shift=({360/10*(2.5)}:\radius+1.5cm)]0,0) {};

\node at (-1.5,0.55){\tiny $u_1$};
\node at (-1.5,-0.6){\tiny $v_1$};

\node at (0,0.55){\tiny $u_2$};
\node at (0,-0.6){\tiny $v_2$};

\node at (1.5,0.55){\tiny $u_3$};
\node at (1.5,-0.6){\tiny $v_3$};

\end{tikzpicture}
    \caption{Sphere with 4 holes, 2 stops, a generating set of Lagrangians ($L_i$) and certain other Lagrangians supported near stops 
          ($T_1$ and $T_2$)}
    \label{puncsphere}
\end{figure}

At the $i^{th}$ interior puncture, we write $u_i,v_i$ for the two primitive Reeb chords 
\[ u_i \in hom_{\mathcal{W}(\Sigma_k, Z)} (L_{i-1}, L_{i}), \ \  v_i \in hom_{\mathcal{W}(\Sigma_k, Z)} (L_{i}, L_{i-1}), \]
as in Figure \ref{puncsphere}. 

\subsection{Case of 2-dimensional pairs-of-pants}

As a warm-up, let us consider the special case $n=2, k=3$. The symplectic manifold $M_{2,3}$ is also known as the 2-dimensional pair-of-pants. The category $\mathcal{W}(M_{2,3}, \Lambda)$ is generated by ${4 \choose 2}= 6$ Lagrangians, and the following proposition computes all the morphisms between them.

\begin{prop} 
\label{n2k3}
We have natural identifications
\begin{align*}
\End(L_{2} \times L_{3}) &= R/ (x_1)  \\
\End(L_{0} \times L_{3}) &=  R/ (x_2)  \\
\End(L_{0} \times L_{1}) &= R/ (x_3)  \\
\End(L_{1} \times L_{3}) &= R/ (x_1x_2) \\ 
\End(L_{0} \times L_{2}) &= R/ (x_2x_3) \\
\End(L_{1} \times L_{2}) &= R/ (x_1x_2x_3)  \\ 
\end{align*}
The morphisms between these objects are encoded by the following quiver over $R$ with relations:
\begin{figure}[!h]
\begin{tikzpicture}
    \tikzset{vertex/.style = {style=circle,draw, fill,  minimum size = 2pt,inner    sep=1pt}}

        \tikzset{edge/.style = {->,-stealth',shorten >=8pt, shorten <=8pt  }}

\tikzset{
  with arrows/.style={->, -stealth}, with arrows/.default=2mm,
}

\node[vertex] (a) at  (0,0) {};
\node[vertex] (a1) at (2.5,2) {};
\node[vertex] (a2) at (2.5,-2) {};
\node[vertex] (a3) at (5,0) {};
\node[vertex] (a4) at (-2.5,-2) {};
\node[vertex] (a5) at (7.5,-2) {};

\node at (-0.5,0.3){\tiny $L_0 \times L_2$};
\node at (2.5,2.3) {\tiny $L_1 \times L_2$};
\node at (2.5,-2.3){\tiny $L_0 \times L_3$};
\node at (5.5,0.3) {\tiny $L_1 \times L_3$};
\node at (-3, -1.7) {\tiny $L_0 \times L_1$};
\node at (8,-1.7){\tiny $L_2 \times L_3$};

\node at (-0.9, -1.5) {\tiny $v_2$};
\node at (-1.5, -0.4) {\tiny $u_2$};

\node at (6.5, -0.4) {\tiny $u_2$};
\node at (6, -1.5) {\tiny $v_2$};

\node at (0.8, 1.5) {\tiny $u_1$};
\node at (1.7, 0.7) {\tiny $v_1$};
\node at (0.8, -1.5) {\tiny $v_3$};
\node at (2.1, -0.7) {\tiny $u_3$};
\node at (4.2, -1.5) {\tiny $v_1$};
\node at (3.2, -0.7) {\tiny $u_1$};
\node at (3.2, 0.7) {\tiny $v_3$};
\node at (4.2, 1.5) {\tiny $u_3$};


\draw[edge] (a)  to[out=55,in=190]  (a1);
\draw[edge] (a1)  to[out=230,in=20]  (a);
\draw[edge] (a)  to[out=-10,in=120] (a2);
\draw[edge] (a2)  to[out=160, in=310] (a);
\draw[edge] (a1) to[out=-10, in=120] (a3);
\draw[edge] (a3) to[out=160, in=310] (a1);

\draw[edge] (a2) to[out=55,in=190] (a3);
\draw[edge] (a3) to[out=230,in=20] (a2);

\draw[edge] (a4) to[out=55, in=190] (a);
\draw[edge] (a) to[out=230, in=20] (a4);

\draw[edge] (a3) to[out=-10, in=120] (a5);
\draw[edge] (a5) to[out=160, in=310] (a3);







\end{tikzpicture}
	\caption{Morphisms between a generating set of objects.}
\label{figure2}
\end{figure}
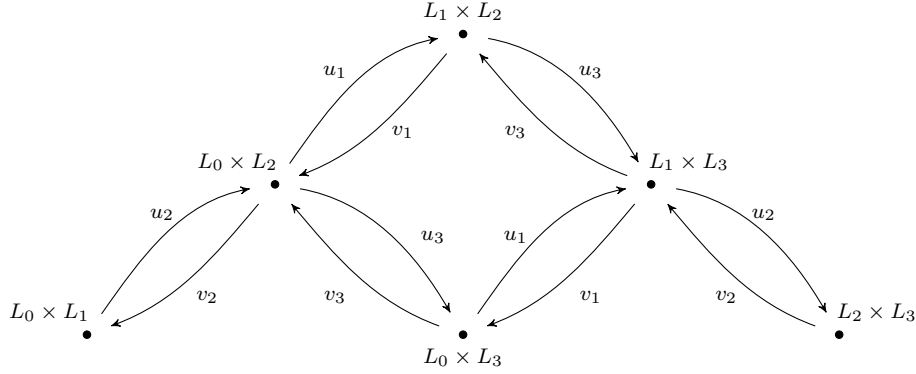

\noindent
where  
\[ u_i v_i = x_i = v_i u_i, \ \  u_3 u_2 = v_2v_3 = u_2 u_1 = v_1 v_2 =0\]
and 
\[ u_3 u_1 = u_1 u_3, v_3 v_1 = v_1 v_3, u_3 v_1 = v_1 u_3, u_1 v_3 = v_3 u_1\]

\end{prop}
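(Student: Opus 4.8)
The plan is to compute everything explicitly from Auroux's combinatorial model, exploiting the fact (stated in the excerpt) that there are no higher products, so the whole endomorphism algebra is an honest associative $\mathbf{k}$-algebra described by strand diagrams. First I would set up the bookkeeping: each object $L_a\times L_b$ ($a<b$) is a pair of disjoint arcs in $\Sigma_3$, and I would list, for each ordered pair of objects, the set of $\sigma$-bijections that can contribute a nonzero morphism space, together with the Reeb chords available along each boundary component. The key point is that the only Reeb chords among the $L_i$ in $\mathcal{W}(\Sigma_3,Z)$ are: idempotents; the primitive chords $u_i\in\hom(L_{i-1},L_i)$, $v_i\in\hom(L_i,L_{i-1})$ at the $i$th interior puncture; their composites running once around each interior puncture, i.e. the loops $x_i=u_iv_i=v_iu_i$; and (because of the two stops on the exterior boundary) the chords $u_1u_3=u_3u_1$, $v_1v_3=v_3v_1$, etc., which wrap partway around the outer boundary past neither stop — but $u_2u_1$, $u_3u_2$, $v_1v_2$, $v_2v_3$ all vanish because the corresponding strand would have to pass through a stop. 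This yields the relations $u_2u_1=v_1v_2=u_3u_2=v_2v_3=0$ and the commutations $u_3u_1=u_1u_3$, etc., at the level of $\mathcal{W}(\Sigma_3,Z)$ itself.

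Next I would compute the six endomorphism algebras. For a product Lagrangian $L_a\times L_b$, $\End(L_a\times L_b)=\bigoplus_\sigma \hom(L_a,L_{\sigma(a)})\otimes\hom(L_b,L_{\sigma(b)})$ over bijections $\sigma$ of $\{a,b\}$. The identity bijection contributes $\hom(L_a,L_a)\otimes\hom(L_b,L_b)$, which as a $\mathbf{k}$-module is spanned by $\id\otimes\id$ together with loops $x_i\otimes\id$, $\id\otimes x_j$, and their products; the transposition contributes only when $L_a$ and $L_b$ are connected by Reeb chords in both directions, which never happens for our configuration since consecutive-index arcs share only one puncture and the chords there go one way on each side. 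So the endomorphism algebra is the commutative subalgebra of $R\otimes_\mathbf{k} R$ generated by the $x_i\otimes\id$ and $\id\otimes x_j$ that actually survive, modulo the relations above plus the crucial ``no two strands cross twice'' condition, which is exactly what produces the differential identity $\partial(\id_{L_1}\otimes uv)=\partial(vu\otimes\id_{L_2})=v\otimes u$ noted in the review section — forcing $x_i\otimes\id=\id\otimes x_i$ on cohomology when both arcs touch puncture $i$, and killing the product when the concatenated diagram would double-cross. Carrying this out for each of the six pairs gives precisely $R/(x_1)$ for $L_2\times L_3$ (neither arc touches puncture $1$... wait — rather, the combined Reeb data sees punctures $2,3$ but the presence of both arcs near puncture... ), and so on down the list; the pattern is that $\End(L_a\times L_b)\cong R/(x_I)$ where $x_I$ is the product of $x_i$ over the ``gap'' punctures strictly between the two arcs in the cyclic configuration, so $L_1\times L_2\mapsto R/(x_1x_2x_3)$ has the whole zero divisor, matching the claimed list.

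Finally I would compute the off-diagonal morphism spaces $\Hom(L_a\times L_b,L_c\times L_d)$ and the composition, again by enumerating $\sigma$'s and strand diagrams. Each generating arrow in Figure \ref{figure2} is $u_i$ or $v_i$ applied in the first or second tensor factor, e.g. $L_0\times L_2\to L_1\times L_2$ is $u_1\otimes\id$; one checks the target module structure is compatible and that the relations $u_iv_i=x_i=v_iu_i$, the vanishing relations, and the commutations hold by the same strand-crossing analysis used on the diagonal. Then I would verify that these generators and relations exhaust everything by a dimension count: for each ordered pair of objects, count the surviving strand diagrams and check it equals the rank predicted by the quiver-with-relations over $R$. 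The main obstacle, and the step that requires genuine care rather than routine bookkeeping, is getting the gradings and the ``crossing at most once'' bookkeeping exactly right — in particular confirming that in each off-diagonal space the transposition bijection $\sigma$ contributes nothing and that no unexpected longer Reeb chords (wrapping around the outer boundary between the stops in more complicated ways) sneak in; this is where the specific placement of the two stops $q_1,q_2$ on the outer boundary is essential, and I would check it by drawing the relevant strand diagrams case by case. Since there are no higher products, once the associative algebra is pinned down the proposition is proved.
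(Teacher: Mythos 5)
Your overall strategy --- enumerate strand diagrams, use Auroux's formulas, exploit formality --- is the right one and is essentially what the paper does. But there are several substantive errors, and the most serious one undermines the heart of the computation.

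\textbf{The transposition does contribute.} You assert that the transposition $\sigma$ ``contributes only when $L_a$ and $L_b$ are connected by Reeb chords in both directions, which never happens for our configuration.'' This is false, and it contradicts your own observation two sentences later. Whenever $L_a$ and $L_b$ are adjacent arcs sharing an interior puncture $i$ (e.g.\ $L_1$ and $L_2$ at puncture $2$), we have $u_i\in\hom(L_{i-1},L_i)$ and $v_i\in\hom(L_i,L_{i-1})$, so the transposition contributes the degree-one generator $c_i=u_i\otimes v_i\in\homs(L_{i-1}\times L_i,L_{i-1}\times L_i)$. This is exactly the element $v\otimes u$ appearing in your differential identity $\partial(\id\otimes uv)=\partial(vu\otimes\id)=v\otimes u$. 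The whole point of the computation in this case is that the cochain complex is \emph{not} simply $\End(L_a)\otimes\End(L_b)$ with zero differential; one has to introduce $a_i=v_iu_i\otimes\id$, $b_i=\id\otimes u_iv_i$, $c_i=u_i\otimes v_i$ with the relations $a_ib_i=b_ia_i=0$, $a_ic_i=c_ib_i$, $b_ic_i=c_ia_i$ and the differential $\partial a_i=-\partial b_i=c_i$, and then actually compute cohomology. The upshot is that only the central combination $x_i=a_i+b_i$ survives, while the rest of the complex (the span of $a_i^mc_i^n$, $b_i^mc_i^n$, $c_i^n$ with $n>0$ or $m>0$ except for $x_i^m$) is exact. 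Your proposal never does this computation, and your parenthetical confusion about $L_2\times L_3$ (``wait --- rather\ldots'') is a symptom of trying to shoehorn the answer into a commutative subalgebra of $R\otimes R$, which is not what the chain complex looks like when the two arcs share a puncture.

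\textbf{Two misidentified mechanisms.} First, $u_2u_1=v_1v_2=u_3u_2=v_2v_3=0$ have nothing to do with the stops: the stops sit on the exterior boundary, while $u_i$, $v_i$ are Reeb chords at interior punctures, where there are no stops at all. The vanishing happens because $u_1$ ends on $L_1$ at its puncture-$1$ endpoint while $u_2$ starts at $L_1$'s puncture-$2$ endpoint, so the concatenation is not a Reeb chord. (The stops are what make $L_0$ and $L_3$ ``not share a boundary component'' at the exterior puncture, which is a different and correct point.) Second, $u_1u_3=u_3u_1$ and $v_1v_3=v_3v_1$ are \emph{not} Reeb chords in $\mathcal{W}(\Sigma_3,Z)$ --- $u_1$ and $u_3$ are not even composable there, as they live on disjoint Lagrangians. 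These are compositions in the product category $\mathcal{W}(M_{2,3},\Lambda)$, and they commute trivially because $u_1\otimes\id$ and $\id\otimes u_3$ act on disjoint tensor factors; the outer boundary and the stops play no role.

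To turn this into a correct proof you would need, in addition to the combinatorial bookkeeping you describe: (a) a correct identification of which pairs of objects have a nontrivial transposition contribution (exactly the adjacent pairs $L_0\times L_1$, $L_1\times L_2$, $L_2\times L_3$, and all off-diagonal morphism spaces between close subsets); and (b) an explicit computation of the cohomology of the resulting dg-algebra --- the key lemma being that the subcomplex generated by $a_i^mc_i^n$, $b_i^mc_i^n$ with $(m,n)\ne(0,0)$ minus the powers of $x_i=a_i+b_i$ is exact. Without (b) you have not established the claimed identifications of the $\End$'s, and without (a) your stated reason for them is incorrect.
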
 
\begin{proof} When $K$ and $L$ do not have endpoints on the same boundary component of $\Sigma$, the generators for $\Endlc(K \times L)$ are just given by $ k \otimes \id, \id \otimes l$ where $k \in \End(K)$ and $l \in \End(L)$. 
The differential on $\Endlc(K\times L)$ is zero, and $k$ and $l$ commute. For example, 
\[ \End(L_{0} \times L_{3}) = \k \langle \id \otimes x_3, x_1 \otimes \id \rangle =  R/ (x_2). \]
On the other hand, when both $K$ and $L$ have an end in the $i$th boundary component of $\Sigma$, 
we have that $end(K \times L)$ contains the vector subspace spanned by the elements
\begin{equation}\label{end-KL-subspace-eq} 
(v_iu_i)^m \otimes (u_iv_i)^n, u_i(v_iu_i)^m \otimes v_i(u_iv_i)^n, \text{ for } m,n \geq 0. 
\end{equation}
To understand the algebra structure let us set 
$$a_i=v_iu_i \otimes \id_L, \ \ b_i=\id_K \otimes u_iv_i, \ \ c_i=u_i\otimes v_i.$$
Then we have the relations
$$a_ib_i=b_ia_i=0, \ \ a_ic_i=c_ib_i, \ \ b_ic_i=c_ia_i,$$
where the first relation comes from the product rule explained in Figure \ref{notallow}. 

The quadratic algebra with these relations has the Gr\"obner bases
$$(c_i^n, \ a_i^mc_i^n, \ b_i^mc_i^n)_{n\ge 0, m>0},$$
which is exactly the elements \eqref{end-KL-subspace-eq}.
The differential is given by 
\[ \partial (a_i) = -\partial(b_i)= c_i, \ \partial(c_i)=0 \]
and extended by the graded Leibniz rule, where we have $\deg(a_i)=\deg(b_i)=0$, $\deg(c_i)=1$.
It is easy to check that the relations are preserved. This also determines the signs. Furthermore, we have
$$\partial(c_i^n)=0, \ \ \partial(a_ic_i^n)=-\partial(b_ic_i^n)=c_i^{n+1}, \ \ 
\partial(a_i^mc_i^n)=-\partial(b_i^mc_i^n)=(a_i^{m-1}+b_i^{m-1})c_i^{n+1},$$
where $m\ge 2$.

If $K$ and $L$ end at the $i^{th}$ boundary component of $\Sigma$, we let 
$$x_i=a_i+b_i=v_i u_i \otimes \id_{L} + \id_{K} \otimes u_i v_i.$$ 

One can see by a straightforward calculation from the explicit description of the chain complex given above, that the contribution to the cohomology $\End(K \times L)$ from the $i^{th}$ boundary components comes from $x_i$ and its positive powers. This determines the cohomology. For example,
\[ \End(L_{1} \times L_{2}) = \k \langle u_1v_1 \otimes \id, v_2u_2 \otimes \id + \id \otimes u_2 v_2, \id \otimes v_3 u_3 \rangle = R/ (x_1x_2x_3) \] 

The morphisms between different Lagrangians are calculated in the same way 
(see Theorem \ref{A-side-morphisms-general-thm} below for a more general calculation). 
\end{proof}

Let us record one simple computation used above.

\begin{lem}\label{uv-acyclic-lem} 
Let us consider the subcomplex of $\Endlc(K\times L)$ spanned by the elements
$$((v_iu_i)^m\otimes (u_iv_i)^n)_{m\ge 0, n>0}, \ \ (u_i(v_iu_i)^m\otimes v_i(u_iv_i)^n)_{m\ge 0,n\ge 0}.$$
Then this subcomplex is exact.
\end{lem}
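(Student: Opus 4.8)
\emph{Proof plan.} I will use the explicit model for the ``$i$-th boundary component'' part of $\Endlc(K\times L)$ set up above: with $a=v_iu_i\ot\id_L$, $b=\id_K\ot u_iv_i$, $c=u_i\ot v_i$, the relations $ab=ba=0$, $ac=cb$, $bc=ca$ give the Gröbner basis $\{c^n\}_{n\ge0}\cup\{a^jc^n,\,b^jc^n\}_{j\ge1,\,n\ge0}$ of the corresponding subalgebra $V_i\subset\Endlc(K\times L)$, and the differential is $\partial c^n=0$, $\partial(ac^n)=-\partial(bc^n)=c^{n+1}$, $\partial(a^jc^n)=-\partial(b^jc^n)=(a^{j-1}+b^{j-1})c^{n+1}$ for $j\ge2$. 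A direct check identifies the elements listed in the statement with the subspace
\[
W=\linspan\big(\{c^n\}_{n\ge1}\cup\{a^jc^n\}_{j\ge1,\,n\ge1}\cup\{b^jc^n\}_{j\ge1,\,n\ge0}\big),
\]
i.e.\ $W$ is spanned by the Gröbner basis of $V_i$ with the family $\{1,a,a^2,\dots\}$ removed. Since $\partial$ always lands in the span of monomials of positive $c$-degree (which is contained in $W$), $W$ is a subcomplex.

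The plan is to prove that $W$ is contractible by writing down an explicit nullhomotopy $h\colon W\to W$ of degree $-1$:
\[
h(c)=-b,\qquad h(c^n)=ac^{n-1}\ (n\ge2),\qquad h(b^jc)=-b^{j+1}\ (j\ge1),
\]
\[
h(b^jc^n)=a^{j+1}c^{n-1}\ (j\ge1,\,n\ge2),\qquad h(a^jc^n)=0\ (j\ge1,\,n\ge1),\qquad h(b^j)=0\ (j\ge1).
\]
One then checks $\partial h+h\partial=\id_W$ on each of the finitely many families of basis monomials; every instance is a one-line substitution using the formulas for $\partial$. Acyclicity of $W$ is then immediate. (Alternatively one can bypass $h$ and argue by cohomological degree: because $ab=ba=0$ one has $x_i^\ell=(a+b)^\ell=a^\ell+b^\ell$, and in each degree $d\ge1$ both $\ker\partial$ and $\im\partial$ inside $W^d$ equal $\linspan\{c^d,x_ic^d,x_i^2c^d,\dots\}$, while $\partial$ is injective on $W^0=\linspan\{b^j\}_{j\ge1}$ with image exactly this span in $W^1$.)

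The only genuine work is bookkeeping — matching the geometric monomials $(v_iu_i)^m\ot(u_iv_i)^n$ and $u_i(v_iu_i)^m\ot v_i(u_iv_i)^n$ against the Gröbner basis, and then keeping the signs $\partial a=-\partial b=c$ and the exceptional low powers ($n=1$, $j=1$) straight while verifying the homotopy identity; there is no conceptual obstacle. As a byproduct, since $\partial$ lands in positive $c$-degree the quotient $V_i/W$ has basis $\{1,\bar a,\bar a^2,\dots\}$ with zero differential, so the long exact sequence of $(V_i,W)$ recovers the fact used in the proof of Proposition \ref{n2k3}: the cohomology of $V_i$ is concentrated in degree $0$ and spanned by the powers of $x_i=v_iu_i\ot\id_L+\id_K\ot u_iv_i$.
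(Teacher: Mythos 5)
Your proof is correct, and your primary route---an explicit contracting homotopy $h$---is a genuinely different argument from the paper's. The paper decomposes the subcomplex by total word length $m+n$ into finite-dimensional pieces $C^{\bullet}$ with $C^{0}=\langle b_i^{n}\rangle$, \ldots, $C^{n}=\langle c_i^{n}\rangle$, and checks $\ker d=\im d$ at each spot by hand; you instead contract the whole (infinite-dimensional) complex at once. I checked the homotopy identity $\partial h+h\partial=\id$ on all your basis families (including the low-power cases $\partial(ac^{n})=c^{n+1}$, $\partial(bc^{n})=-c^{n+1}$, which your boundary prescriptions $h(c)=-b$, $h(b^{j}c)=-b^{j+1}$ handle correctly), and it holds; your identification of the subcomplex with $W$, the span of the Gr\"obner monomials with $\{1,a,a^{2},\ldots\}$ removed, is also right. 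The homotopy buys the slightly stronger conclusion that $W$ is contractible and avoids organizing by the auxiliary word-length grading, at the cost of somewhat more case-checking. Your alternative ``bypass $h$'' sketch---compute $\ker$ and $\im$ in each cohomological degree using $x_i^{\ell}=(a+b)^{\ell}=a^{\ell}+b^{\ell}$ since $ab=ba=0$---is in substance the paper's computation, differing only in that the paper further splits by $m+n$ so that each piece is finite-dimensional.
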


\Pf . In terms of the generators $a_i,b_i,c_i$, our subcomplex is spanned by the elements
$$(a_i^mc_i^n)_{m\ge 0,n>0}, \ \  (b_i^mc_i^n)_{m>0,n\ge 0}.$$
This complex splits into a direct sum of subcomplexes with fixed total degree (given by $m+n$).
The subcomplex $C^{\bullet}$ corresponding to the degree $n>0$ has terms
\begin{align*}
&C^0=\lan b_i^n\ran, \ \ C^1=\lan a_i^{n-1}c_i, b_i^{n-1}c_i\ran, \ \ C^2=\lan a_i^{n-2}c_i^2, b_i^{n-2}c_i^2\ran,\ldots,\\
&C^{n-1}=\lan a_ic_i^{n-1}, b_ic_i^{n-1}\ran, \ \ C^n=\lan c_i^n\ran.
\end{align*}
Now we see that for $m\in [1,n-1]$, one has
$$\ker(d:C^m\to C^{m+1})=\lan (a_i^{n-m}+b_i^{n-m})c_i^m\ran=\im(d:C^{m-1}\to C^m),$$
while $\ker(d:C^0\to C^1)=0$ and $\im(d:C^{n-1}\to C^n)=C^n$.
\ed

\subsection{General $n,k$ with $k \geq n$}

We now describe the computation for arbitrary $k,n$ with $k \geq n$. Let $L_{0}, \ldots, L_{k-1}, L_{k}$ be the arcs that generate $\mathcal{W}(\Sigma, Z)$ as before. The generators of $\mathcal{W}(M_{n,k}, \Lambda)$ are given by the ${k+1 \choose n}$ Lagrangians. Let 
\[ L_S = L_{i_1 } \times L_{i_2} \times \ldots \times L_{i_n} , \text{ with } S=\{i_1<i_2<\ldots<i_n\}\sub [0,k].\]

In order to understand morphisms between $L_S$ and $L_{S'}$ we need the following combinatorial statement.

\begin{prop}\label{perm-prop}
Let $S,S'\sub [0,k]$ be a pair of size $n$ subsets, and let $g:S\to S'$ be a bijection such that for every 
$i\in S$ one has $g(i)\in \{i-1,i,i+1\}$. Then there exists a collection of disjoint subintervals $I_1,\ldots,I_r\sub [0,k]$
such that $S\setminus \sqcup_j I_j=S'\setminus \sqcup_j I_j$; $g(i)=i$ for $i\in S\setminus \sqcup_j I_j$;
and for each subinterval $I_j$ one of the following holds:
\begin{enumerate}
\item $I_j=[i,i+1]\sub S\cap S'$ and $g$ swaps $i$ with $i+1$;
\item $I_j=[a,b]$, $S\cap I_j=[a,b-1]$, $S'\cap I_j=[a+1,b]$, and $g(i)=i+1$ for $i\in S\cap I_j$;
\item $I_j=[a,b]$, $S'\cap I_j=[a,b-1]$, $S\cap I_j=[a+1,b]$, and $g(i)=i-1$ for $i\in S\cap I_j$.
\end{enumerate}
\end{prop}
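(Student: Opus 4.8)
<br>

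The plan is to analyze the bijection $g:S\to S'$ by looking at where it moves points, and to decompose the "support" of $g$ (the set of $i$ with $g(i)\neq i$) into connected pieces. First I would observe that the hypotheses $g(i)\in\{i-1,i,i+1\}$ are very restrictive: if $g(i)=i+1$ for some $i\in S$, then since $i+1\in S'$ and $g$ is a bijection onto $S'$, the preimage of $i+1$ under $g$ is $i$, so $i+1\notin g^{-1}(\{i+1\})$; more importantly, whatever element of $S$ maps to $i$ must be $i$, $i+1$, or $i+2$, and it cannot be $i$ (that goes to $i+1$), so it is $i+1$ or $i+2$. This kind of local forced-move analysis is the engine of the proof.

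The key steps, in order: (1) Let $T=\{i\in S: g(i)\neq i\}\subseteq S$ and also record $g(T)\subseteq S'$; note $g$ restricts to the identity on $S\setminus (T\cup g(T))$ after we check that $g$ fixes everything outside a union of intervals. (2) Consider a maximal "run" of consecutive integers, i.e. a maximal subinterval $I=[a,b]\subseteq[0,k]$ such that $I$ meets $T\cup g(T)$ in a way that cannot be separated — concretely, take the equivalence relation on $T$ generated by $i\sim i'$ when $|i-i'|\le 2$ (or when they are linked through a shared value), and let $I_j$ be the convex hull of each equivalence class together with the relevant endpoints. (3) On each such interval $I_j$, argue by an induction from the left endpoint: the leftmost element $a'$ of $S\cap I_j$ that is moved must have $g(a')=a'+1$ (it cannot go left, since $a'-1$ is either outside $I_j$ or already equals something fixed, which would contradict maximality/minimality), unless $a'$ and $a'+1$ are swapped. (4) Then propagate: once $g(a')=a'+1$, the element mapping to $a'$ is $a'+1$ (if $g$ swaps) or $a'+2$ (if not), and in the non-swap case we get $g(a'+1)=a'+2$, and we continue until we reach an element $b'$ with $g(b')=b'+1$ where $b'+1\notin S$ — this produces a type (2) interval; the mirror-image argument (starting from a point moved left) produces type (3); and the swap produces type (1). (5) Finally check the intervals $I_j$ produced are pairwise disjoint — this follows because they are convex hulls of disjoint equivalence classes and the classes are separated by fixed points with gaps at least $2$.

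The main obstacle I expect is making the case analysis in step (3)–(4) genuinely exhaustive and correctly handling the boundary/overlap between where $S$ and $S'$ differ: one must show that within a single run the pattern is \emph{purely} one of "shift everything right by one within $[a,b]$", "shift everything left by one", or "swap an adjacent pair", with no mixing (e.g. a shift-right run cannot abut a shift-left run without a fixed point of gap $\ge 2$ in between, and a swap $[i,i+1]$ is isolated). The cleanest way to rule out mixing is a parity/counting argument: on a type (2) interval $[a,b]$ we have $|S\cap[a,b]|=b-a=|S'\cap[a,b]|$ is forced, and $S\cap[a,b]=[a,b-1]$, $S'\cap[a,b]=[a+1,b]$ are the \emph{only} size-$(b-a)$ subsets of $[a,b]$ for which such a $g$ restricts to a bijection with all moves $+1$; any attempt to have a gap inside would force some $g(i)=i$ there, contradicting that $[a,b]$ is a single run. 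Once one nails down that each run is rigidly one of the three types, assembling the global statement — $S\setminus\sqcup I_j=S'\setminus\sqcup I_j$ and $g=\id$ off $\sqcup I_j$ — is immediate from the definition of the runs as the components of the support.
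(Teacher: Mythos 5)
There is a genuine gap in your proposal: the ``run'' decomposition you set up via the equivalence $i\sim i'$ when $|i-i'|\le 2$ is too coarse, and the no-mixing claim you flag as the key technical point is actually false. Consider $S=\{1,2\}$, $S'=\{0,3\}$. The only admissible bijection is $g(1)=0$, $g(2)=3$: one element moves left, the adjacent one moves right. Under your relation, $1$ and $2$ fall in a single equivalence class, and adding the ``relevant endpoints'' gives the single interval $[0,3]$; but $S\cap[0,3]=\{1,2\}$ and $S'\cap[0,3]=\{0,3\}$ fit none of types (1)--(3). The correct decomposition uses two disjoint abutting intervals, $[0,1]$ of type (3) and $[2,3]$ of type (2), so ``a shift-right run cannot abut a shift-left run without a gap $\ge 2$'' is wrong. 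Relatedly, the claim in your step (3) that the leftmost moved element cannot go left (because $a'-1$ is ``already something fixed'') does not follow: in the example $a'=1$ and $g(a')=0$, and there is nothing of $S$ at $0$ to be ``fixed.''

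The paper avoids this by defining the decomposition from $(S,S')$ alone, independently of $g$: it sets $T(S,S')=\{i:\#(S\cap[0,i])=\#(S'\cap[0,i])\}$, writes $T$ as a disjoint union of intervals, and then shows (Lemma \ref{perm-lem}(i)) that any admissible $g$ is forced to respect the resulting partition of $[0,k]$. On the complementary ``shift'' intervals (where the running counts disagree) the direction of the shift is forced by a prefix-count argument, and on the intervals inside $T$, where $S$ and $S'$ coincide, Lemma \ref{perm-lem}(ii) resolves $g$ into adjacent swaps by an easy induction. In my example, $T=\{1\}\cup\{3\}$ and the prefix counts immediately separate $[0,1]$ from $[2,3]$, which your proximity relation cannot do. If you want to salvage a $g$-first approach, you would need to refine the equivalence so that it also records the direction of motion (so adjacent elements moving in opposite directions are not merged), but at that point you are essentially reconstructing the prefix-count invariant and it is cleaner to define it outright.
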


We need some preparations before giving a proof. For a pair of size $n$ subsets $S,S'\sub [0,k]$, let us set
$$T=T(S,S'):=\{ i \ |\ \#(S\cap [0,i])=\#(S'\cap [0,i])\}.$$
We can write $T$ as the union of disjoint intervals,
$$T=[s_1,t_1]\sqcup [s_2,t_2]\sqcup\ldots [s_r,t_r],$$
where $t_r=k$, $s_i\le t_i$ and $t_i+1<s_{i+1}$.

\begin{lem}\label{perm-lem} 
(i) Let $g:S\to S'$ be a bijection such that for every $i\in S$ one has $g(i)\in \{i-1,i,i+1\}$.
Then $g$ induces bijections
$$S\cap [s_i+1,t_i]\rTo{\sim} S'\cap [s_i+1,t_i], 2\le i\le r, \ \ 
S\cap [t_i+1,s_{i+1}]\rTo{\sim} S'\cap [t_i+1,s_{i+1}], 1\le i\le r-1.$$
In addition, if $s_1=0$ then
$$g(S\cap [0,t_1])=S\cap [0,t_1]=S'\cap [0,t_1],$$
and if $s_1>0$ then 
$$g(S\cap [0,s_1])=S'\cap [0,s_1], \ \ g(S\cap [s_1+1,t_1])=S'\cap [s_1+1,t_1].$$
Furthermore, we have
$$S\cap [s_i+1,t_i]=S'\cap [s_i+1,t_i]$$
(note that these intervals could be empty). On the other hand,
each restriction
$$g: S\cap [t_i+1,s_{i+1}]\to S'\cap [t_i+1,s_{i+1}]$$
(resp., $g:S\cap [0,s_1]\to S'\cap [0,s_1]$ if $s_1>0$)
is given either by $g(j)=j-1$ or $g(j)=j+1$.

\noindent
(ii) Let $g:S\to S$ be a permutation such that for every $i\in S$ one has $g(i)\in \{i-1,i,i+1\}$.
Then there exists a subset $S_0\sub S$ of the form
$$S_0=\{i_1,i_1+1,i_2,i_2+1,\ldots,i_r,i_r+1\},$$
where $i_s+1<i_{s+1}$, such that $g$ swaps $i_s$ and $i_{s+1}$ for $s=1,\ldots,r$, and $g(i)=i$ for every $i\in S\setminus S_0$.
\end{lem}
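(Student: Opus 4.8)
The plan is to route everything through the counting functions $\phi(i):=\#(S\cap[0,i])$ and $\phi'(i):=\#(S'\cap[0,i])$ (with $\phi(-1)=\phi'(-1)=0$) and their difference $\delta:=\phi-\phi'$. Then $T=\{i\in[0,k]:\delta(i)=0\}$, $\delta(-1)=\delta(k)=0$, and $\delta$ increases by $1$ at each $i\in S\setminus S'$, decreases by $1$ at each $i\in S'\setminus S$, and is unchanged otherwise. The crucial observation is the a priori bound $|\delta(i)|\le 1$ for all $i$: since both $g$ and $g^{-1}$ are bijections with $|g(j)-j|\le1$, writing $\phi'(i)=\#\{j\in S:g(j)\le i\}$ and using $j\le i-1\Rightarrow g(j)\le i\Rightarrow j\le i+1$ gives $\phi(i-1)\le\phi'(i)\le\phi(i+1)$, hence $|\delta(i)|\le1$. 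Being $\{0,\pm1\}$-valued and changing by at most $1$ per step, $\delta$ is identically $+1$ or identically $-1$ on each maximal run of $[0,k]\setminus T$; these runs are exactly the gaps $[t_i+1,s_{i+1}-1]$ between consecutive intervals of $T$, together with an initial run $[0,s_1-1]$ when $s_1>0$.

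Next I would isolate a ``no crossing'' dichotomy at points of $T$. Fix $m\in T$ and write $S=(S\cap[0,m])\sqcup(S\cap[m+1,k])$. Because $|g(j)-j|\le1$, at most one element of $S\cap[0,m]$ can have image outside $[0,m]$ (necessarily $m$, with $g(m)=m+1$) and at most one element of $S\cap[m+1,k]$ can have image inside $[0,m]$ (necessarily $m+1$, with $g(m+1)=m$); comparing cardinalities and using $\phi'(m)=\phi(m)$ forces the number that leave $[0,m]$ to equal the number that enter it. Hence either $g(S\cap[0,m])=S'\cap[0,m]$ (and likewise $g(S\cap[m+1,k])=S'\cap[m+1,k]$), or $g$ transposes $m$ and $m+1$; in the latter case $m,m+1\in S\cap S'$, so $\delta(m-1)=\delta(m)=\delta(m+1)=0$. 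In particular, whenever $m\in T$ and $\delta(m-1)\ne0$ or $\delta(m+1)\ne0$ — which occurs at every right endpoint $t_i$ ($i<r$), every left endpoint $s_i$ ($i\ge2$), and at $s_1$ when $s_1>0$ — the map $g$ respects the partition at $m$; at $m=t_r=k$ this is trivial since $g(S)=S'$.

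Part (i) then comes out by bookkeeping. Subtracting the identities $g(S\cap[0,p])=S'\cap[0,p]$ at consecutive relevant endpoints $p$ gives the asserted bijections $g:S\cap[s_i+1,t_i]\to S'\cap[s_i+1,t_i]$ (for $i\ge2$), $g:S\cap[t_i+1,s_{i+1}]\to S'\cap[t_i+1,s_{i+1}]$, and, when $s_1>0$, $g:S\cap[0,s_1]\to S'\cap[0,s_1]$; for $s_1=0$ the identity at $t_1$ alone gives $g(S\cap[0,t_1])=S'\cap[0,t_1]$. The equalities $S\cap[s_i+1,t_i]=S'\cap[s_i+1,t_i]$ (and $S\cap[0,t_1]=S'\cap[0,t_1]$ when $s_1=0$) are immediate, since for consecutive indices $j-1,j$ inside an interval of $T$ one has $\delta(j-1)=\delta(j)=0$, that is, $j\in S\Leftrightarrow j\in S'$. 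Finally, on a gap $[t_i+1,s_{i+1}-1]$, and on $[0,s_1-1]$ when $s_1>0$, $\delta$ equals a constant $\eps\in\{\pm1\}$; a short computation with $\delta$ then shows that, over the relevant range, $S'$ is the $\eps$-translate of $S$ and $g$ acts by the translation $j\mapsto j+\eps$. (Order-preservation of $g$ on these strands, used implicitly, is itself part of the dichotomy, since an adjacent transposition of $g$ forces $\delta$ to vanish.)

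Part (ii) I would prove independently, by induction on the number of non-fixed points of the permutation $g:S\to S$. If $g\ne\id$, let $m$ be the smallest $i\in S$ with $g(i)\ne i$. Then $g(m)\ne m-1$, for otherwise $m-1\in S$ is a fixed point and $g(m)=m-1=g(m-1)$ violates injectivity; hence $g(m)=m+1$ and $m+1\in S$. The same argument for $g^{-1}$ gives $g(m+1)=m$, so $g$ transposes $m$ and $m+1$ and restricts to a nearest-neighbour permutation of $S\setminus\{m,m+1\}$, all of whose non-fixed points exceed $m+1$; the induction hypothesis finishes the proof with $i_1=m$. I expect the combinatorial core — the bound $|\delta|\le1$ and the crossing dichotomy — to be short, and the one real pitfall to be the endpoint bookkeeping in part (i): keeping the asymmetric roles of $s_i$ and $t_i$ straight, treating $s_1=0$ on its own, and remembering that an adjacent transposition of $g$ may occur in the interior of an interval of $T$ but never at one of its endpoints.
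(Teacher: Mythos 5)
Your proof is correct and follows essentially the same route as the paper's: the cardinality comparison $\phi(i-1)\le\phi'(i)\le\phi(i+1)$ (equivalently the inclusions $g(S\cap[0,i])\subset S'\cap[0,i+1]$, $g^{-1}(S'\cap[0,i])\subset S\cap[0,i+1]$) drives both arguments, both split $[0,k]$ into the level sets of $\delta$ and argue a chain/shift on the gaps, and part (ii) is in both cases an induction peeling off the minimal non-fixed element. Your ``crossing dichotomy'' and the explicit bound $|\delta|\le 1$ are a clean repackaging of the same bookkeeping; the only place to tighten is the phrase ``a short computation with $\delta$ then shows\ldots $g$ acts by $j\mapsto j+\eps$,'' since passing from ``$S$ and $S'$ agree in the interior of the gap'' to ``$S$ fills the gap as a solid interval and $g$ shifts by one'' genuinely needs the nearest-neighbour constraint on $g$ (as in the paper's chain $g(t_i+1)=t_i+2$, $g(t_i+2)=t_i+3,\ldots$), not $\delta$ alone.
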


\Pf . (i) Note that for every $i$ we have $g(S\cap [0,i])\sub S'\cap [0,i+1]$ and $g^{-1}(S'\cap [0,i])\sub S\cap [0,i+1]$.
Now, since $t_i\in T$ and $t_i+1\not\in T$, we have either $t_i+1\in S\setminus S'$ or $t_i+1\in S'\setminus S$.
In the former case we have
$$g(S\cap [0,t_i])\sub S'\cap [0,t_i+1]=S'\cap [0,t_i],$$
while in the latter case we have
$$g^{-1}(S'\cap [0,t_i])\sub S\cap [0,t_i+1]=S\cap [0,t_i].$$
Thus, we get $g(S\cap [0,t_i])=S'\cap [0,t_i]$.

Next, we have $s_i-1\not\in T$ and $s_i\in T$, so either $s_i\in S\setminus S'$ or $s_i\in S'\setminus S$. In the former case
we have
$$g(S\cap [s_i+1,k])\sub S'\cap [s_i,k]=S'\cap [s_i+1,k],$$
while in the latter case we have
$$g^{-1}(S'\cap [s_i+1,k])\sub S\cap [s_i,k]=S\cap [s_i+1,k].$$
Since $\#(S\cap [s_i+1,k])=\#(S'\cap [s_i+1,k])$, we deduce that $g(S\cap [s_i+1,k])=S'\cap [s_i+1,k]$.
This implies our first assertion.

Assume that $s_i+1\le t_i$. Since both $s_i$ and $s_i+1$ are in $T$, either the element 
$s_i+1$ belongs to $S\cap S'$, or it does not belong to both $S$ and $S'$. Proceeding in the same way we see
that $S\cap [s_i+1,t_i]=S'\cap [s_i+1,t_i]$.

Next, let us consider the interval $[t_i+1,s_{i+1}]$. For every $i$ in this interval, except for the right end,
we have $i\not\in T$. Assume first that $t_i+1\in S$. Then, since $t_i\in T$ and $t_i+1\not\in T$, 
we should have $t_i+1\not\in S'$, so $g(t_i+1)=t_i+2\in S'$. If $t_i+2<s_{i+1}$ we can continue in the same way:
since $\#(S\cap[0,t_i+1])=\#(S'\cap[0,t_i+1])+1$ and $t_i+2\in S'\setminus T$, we should have $t_i+2\in S$, and so
$g(t_i+2)=t_i+3$, etc. In the case $t_i+1\not\in S$, we should have $t_i+1\in S'$, so we can apply the same argument
with $S$ and $S'$ swapped and $g$ replaced by $g^{-1}$.

\noindent
(ii) We use induction on the cardinality of $S$.
Let $i_0$ be the minimal element of $S$. Then we have either $g(i_0)=i_0$ or $g(i_0)=i_0+1$. In the former
case we have $g(S\setminus \{i_0\})=S\setminus \{i_0\})$, so we can apply the induction assumption to
$S\setminus\{i_0\}$. Now let us assume that $g(i_0)=i_0+1$. Then $g^{-1}(i_0)\neq i_0$, so we should have
$g^{-1}(i_0)=i_0+1$. Thus, in this case $g$ swaps $i_0$ and $i_0+1$. Now we can replace $S$ by 
$S\setminus \{i_0,i_0+1\}$ and apply the induction assumption.
\ed

\noindent
{\it Proof of Proposition \ref{perm-prop}}. By Lemma \ref{perm-lem}(i), we can partition $[0,k]$ into subintervals of two kinds:
those for which one of the conditions (2) or (3) of Proposition \ref{perm-prop} holds, and subintervals $I$ such that
$S\cap I=S'\cap I$ and $g(S\cap I)=S'\cap I$. It remains to apply Lemma \ref{perm-lem}(ii) to all subintervals of the second kind.
\ed

\begin{definition}
We say that two size $n$ subsets $S,S'\sub [0,k]$ are {\it close} if
there exists a bijection $g:S\to S'$ such that $g(i)\in \{i-1,i, i+1\}$ for every $i$.
\end{definition}

\begin{cor}\label{pair-partition-cor} 
If size $n$ subsets $S,S'\sub [0,k]$ are close
then there exists a unique collection of disjoint subintervals of $[0,k]$,
$(I_1,\ldots,I_r$, $J_1,\ldots,J_s)$,
such that 
\begin{itemize}
\item $([0,k]\setminus \sqcup_a I_a\sqcup\sqcup_b J_b)\cap S=([0,k]\setminus \sqcup_a I_a\sqcup\sqcup_b J_b)\cap S'$;
\item for each $a=1,\ldots,r$, if $I_a=[i,j]$ then $I_a\cap S=[i,j-1]$, $I_a\cap S'=[i+1,j]$;
\item for each $b=1,\ldots s$, if $J_b=[i,j]$ then $J_b\cap S'=[i,j-1]$, $J_b\cap S=[i+1,j]$.
\end{itemize} 
\end{cor}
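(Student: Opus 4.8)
The plan is to obtain existence directly from Proposition~\ref{perm-prop} and to prove uniqueness by reconstructing the collection from an explicit numerical invariant of the pair $(S,S')$. For $i\in[-1,k]$ I set
\[ f(i):=\#\bigl(S\cap[0,i]\bigr)-\#\bigl(S'\cap[0,i]\bigr), \]
so that $f(-1)=0$ and $f(k)=0$ because $\#S=\#S'$; note that $T(S,S')=\{\,i\ge 0: f(i)=0\,\}$.

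For \emph{existence}, I would choose any bijection $g\colon S\to S'$ witnessing that $S,S'$ are close and apply Proposition~\ref{perm-prop}. Among the resulting disjoint subintervals, let $I_1,\dots,I_r$ be those of type~(2) and $J_1,\dots,J_s$ those of type~(3); the remaining ones are of type~(1), i.e.\ of the form $[i,i+1]\subset S\cap S'$, and on each of them $S$ and $S'$ agree. The second and third bullets of the Corollary are then precisely conditions (2) and (3) of Proposition~\ref{perm-prop}. For the first bullet, I would combine the equality $S\setminus\bigsqcup_jI_j=S'\setminus\bigsqcup_jI_j$ (the union running over \emph{all} the intervals produced by the Proposition) with the fact that $S$ and $S'$ agree on every type-(1) interval; since $[0,k]\setminus\bigl(\bigsqcup_aI_a\sqcup\bigsqcup_bJ_b\bigr)$ is the union of $[0,k]\setminus\bigsqcup_jI_j$ with the type-(1) intervals, the claim follows.

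For \emph{uniqueness}, I would show that any collection $(I_1,\dots,I_r,J_1,\dots,J_s)$ of disjoint subintervals satisfying the three conditions is reconstructed from $f$, hence depends only on $(S,S')$. First discard any interval not meeting $S\triangle S'$: the conditions force such an interval to be a single point of $[0,k]\setminus(S\cup S')$ and it carries no information (were it allowed, uniqueness would fail). List the remaining intervals as $K_1<\dots<K_t$ from left to right, so $r+s=t$. On $[0,k]\setminus\bigsqcup_mK_m$ one has $S=S'$, so $f$ is locally constant there, and $f(-1)=0$ gives $f\equiv0$ on the gap preceding $K_1$. If $K_m=[i,j]$ is one of the $I_a$, the conditions give $i\in S\setminus S'$, $\{i+1,\dots,j-1\}\subset S\cap S'$ and $j\in S'\setminus S$, so assuming $f\equiv0$ on the gap just before $K_m$ we get $f\equiv+1$ on $\{i,\dots,j-1\}$, $f(j)=0$, and again $f\equiv0$ on the gap after $K_m$ (symmetrically with $+1$ replaced by $-1$ if $K_m$ is one of the $J_b$). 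Induction on $m$ shows $f$ takes values in $\{-1,0,1\}$, vanishes on every gap, and satisfies
\[ f^{-1}(1)=\bigsqcup_{a}\bigl([\min I_a,\ \max I_a-1]\cap\Z\bigr),\qquad f^{-1}(-1)=\bigsqcup_{b}\bigl([\min J_b,\ \max J_b-1]\cap\Z\bigr). \]
Since the $I_a$ are pairwise disjoint intervals of $\Z$, the sets $[\min I_a,\max I_a-1]$ are pairwise separated (by at least the point $\max I_a$, where $f=0$), hence they are exactly the maximal runs of consecutive integers in $f^{-1}(1)$; thus $I_a=[p,q+1]$ is recovered from the run $[p,q]$, and likewise the $J_b$ from $f^{-1}(-1)$. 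As this uses only $S$ and $S'$, the collection is unique.

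The main obstacle is the uniqueness step: one has to verify that distinct intervals leave distinguishable traces in $f$ — concretely, that $f=0$ at the right endpoint of each $I_a$ forces a gap between the shifted sets $[\min I_a,\max I_a-1]$, so that these runs biject with the intervals — and that spurious singleton intervals are correctly excluded. Once Proposition~\ref{perm-prop} is granted, everything else is routine bookkeeping.
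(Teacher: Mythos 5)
Your proof is correct, and it takes essentially the route the paper intends: existence is read off from Proposition~\ref{perm-prop} by keeping the type-(2) and type-(3) intervals, and uniqueness is obtained by reconstructing the collection from the discrepancy count $f(i)=\#(S\cap[0,i])-\#(S'\cap[0,i])$, which is exactly the quantity the paper tracks via $T(S,S')=\{i: f(i)=0\}$ in Lemma~\ref{perm-lem}. The one place you go slightly beyond the paper is in flagging the degenerate singleton intervals $[i,i]$ with $i\notin S\cup S'$: you are right that, read completely literally, the conditions do not exclude such intervals, so that ``unique'' must be understood as unique among collections of nontrivial subintervals (equivalently, intervals meeting $S\triangle S'$); this reading is what the rest of the paper uses, e.g.\ in Theorem~\ref{A-side-morphisms-general-thm}(ii) and Lemma~\ref{mirror-lem}, where $I_a,J_b$ always have $\min<\max$. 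With that convention spelled out, your induction on the ordered list of intervals, showing $f$ vanishes on the gaps and equals $\pm1$ on the left portions of the intervals, cleanly recovers the $I_a$ (resp.\ $J_b$) as the maximal runs of $f^{-1}(1)$ (resp.\ $f^{-1}(-1)$), each extended one step to the right. No gaps.
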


For every proper subinterval $[i,j]\sub[0,k]$, let us set 
$$\cA_{[i,j]}=\begin{cases} \k [x_i,\ldots,x_{j+1}]/(x_i\ldots x_{j+1}) & \text{ if } i>0, j<k, \\ 
\k[x_1,\ldots,x_{j+1}] & \text { if } i=0, j<k, \\ 
\k[x_i,\ldots,x_k] & \text { if } i>0, j=k,\end{cases}$$ 
$$\cA'_{[i,j]}=\k [x_{i+1},\ldots,x_j].$$



Finally, for each interval $[i,j]$ we consider the elements
$$u_{[i,j]}:=u_{i+1}\ot u_{i+2}\ot\ldots \ot u_j\in \homs(L_i,L_{i+1})\ot\homs(L_{i+1},L_{i+2})\ot\ldots\homs(L_{j-1},L_j),$$
$$v_{[i,j]}:=v_{i+1}\ot v_{i+2}\ot\ldots \ot v_j\in \homs(L_{i+1},L_i)\ot\homs(L_{i+2},L_{i+1})\ot\ldots\homs(L_j,L_{j-1}).$$

\begin{thm}\label{A-side-morphisms-general-thm}
\noindent
(i) For every $k$-tuple of integers $(d_1,\ldots,d_k) \in \mathbb{Z}^k$ there exists a unique grading structure on $M_{n,k}$ such that for $S=[i_1,j_1]\sqcup [i_2,j_2]\sqcup \ldots\sqcup [i_r,j_r]$ with $j_s+1<i_{s+1}$, one has a natural isomorphism
of graded algebras
$$\End(L_S)\simeq \cA(S,S):=\cA_{[i_1,j_1]}\ot \cA_{[i_2,j_2]}\ot\ldots \ot\cA_{[i_r,j_r]},$$
where we have $\deg(x_i)=d_i$.

\noindent
(ii) For a pair of size $n$ subsets $S, S'\sub [0,k]$, one has $\Hom(L_S,L_{S'})=0$ if $S$ and $S'$ are not close.
If $S$ and $S'$ are close then there is a natural identification
\begin{equation}\label{hom-S-S'-isom}
\Hom(L_S,L_{S'})\simeq \cA(S,S')\cdot f_{S,S'}, \text{ with } \cA(S,S'):=\cA(S_0,S_0)\ot
\bigotimes_{a=1}^r \cA'_{I_a}\ot\bigotimes_{b=1}^s \cA'_{J_b},
\end{equation}
where we use the subintervals $(I_a)$, $(J_b)$ from Corollary \ref{pair-partition-cor}, and set
$$S_0=S\setminus (\sqcup_a I_a\sqcup\sqcup_b J_b)=S'\setminus (\sqcup_a I_a\sqcup\sqcup_b J_b),$$
$$f_{S,S'}:=\bigotimes_{a=1}^r u_{I_a}\ot\bigotimes_{b=1}^s v_{J_b}\ot\id\in \Hom(L_S,L_{S'})$$
Furthermore, \eqref{hom-S-S'-isom} is an isomorphism of $\End(L_{S'})-\End(L_S)$-bimodules, where the structure of 
a $\cA(S',S')-\cA(S,S)$-bimodule on $\cA(S,S')$ is induced by the surjective homomorphisms of $\k$-algebras
$$\cA(S,S)\to \cA(S,S'), \ \ \cA(S',S')\to \cA(S,S')$$
sending each $x_i$ either to $x_i$ or to $0$ (if $x_i$ is absent in $\cA(S,S')$).

\noindent
(iii) The compositions are uniquely determined by the bimodule structures on $\Hom(L_S,L_{S'})$ together with
the rule
\begin{equation}\label{generators-composition-eq}
f_{S',S''}f_{S,S'}=\begin{cases} (\prod_{[i-1,i]\sub T(S,S',S'')} x_i)\cdot f_{S,S''}, & S \text{ and } S'' \text{ are close},\\
0, & \text{otherwise}.\end{cases}
\end{equation}
Here $(I_a)$, $(J_b)$ (resp., $(I'_{a'})$, $(J'_{b'})$) are the subintervals of Corollary \ref{pair-partition-cor} for the pair $S,S'$
(resp., $S',S''$), and
$$T(S,S',S'')=\bigl((\sqcup_a I_a)\cap(\sqcup_{b'} J'_{b'})\bigr)\cup 
\bigl((\sqcup_{a'} I'_{a'})\cap(\sqcup_b J_b)\bigr).$$
which can be empty in which case we put $f_{S',S''}f_{S,S'}= f_{S,S''}$.

\noindent
(iv) The dg-algebra $\bigoplus_{S,S'} \mathrm{hom}(L_S,L_{S'})$ contains a quasi-isomorphic dg-subalgebra with the trivial
differential, in particular, it is formal.

          For any abelian group $D$, and any given assignment of degrees, $\deg(x_i)=d_i\in D$, $i=1,\ldots,k$,
there is at most one $D$-grading on the algebra 
$$\cA^{\circ\circ}=\bigoplus_{S,S'}\Hom(L_S,L_{S'})$$
coming from some choices of $\deg(f_{S,S'})=d_{S,S'}\in D$, up to a transformation of the form
$d_{S,S'}\mapsto d_{S,S'}+d_{S'}-d_S$. For $D=\Z$ and any $d_i\in\Z$, such a $\Z$-grading exists.
\end{thm}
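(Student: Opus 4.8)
The plan is to establish parts (i)–(iv) together, since the morphism spaces, bimodule structures, compositions, differentials and gradings are all computed from the same combinatorial model: for each pair $(S,S')$ the dg-module $\homs(L_S,L_{S'})$ decomposes, by Auroux's tensor formula, as a direct sum over bijections $\sigma:S\to S'$, and only the $\sigma$ whose restriction to each relevant subinterval is one of the ``elementary'' bijections of Proposition \ref{perm-prop} can contribute. First I would treat the case $S=S'$: decompose $\homs(L_S,L_S)$ along the boundary components of $\Sigma_k$ that the arcs $L_i$ touch. Each interior puncture contributes independently a copy of the dg-subalgebra generated by $a_i,b_i,c_i$ analyzed in the proof of Proposition \ref{n2k3}; invoking Lemma \ref{uv-acyclic-lem} to kill the acyclic part, the cohomology of each such factor is exactly $\k[x_i]$ if the puncture is separating two arcs of $S$ that are ``glued'' inside a subinterval of $S$ of length $\ge 2$, and the relations $x_{[i_s,j_s]}:=x_{i_s}\cdots x_{j_s+1}=0$ appear precisely because the $L_{i}$ for a maximal subinterval $[i_s,j_s]\subset S$ together cut out a disk whose boundary is the product of the corresponding Reeb orbits. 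This yields the isomorphism $\End(L_S)\simeq \cA(S,S)$ of part (i), with the grading of $x_i$ an as-yet-undetermined integer $d_i$ (the exact values being fixed only in Section \ref{comparison-sec}).

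Next I would compute $\Hom(L_S,L_{S'})$ for $S\ne S'$ close. Using Corollary \ref{pair-partition-cor}, write $[0,k]$ as the disjoint union of the ``stable'' locus $S_0$ and the sliding intervals $I_a,J_b$. The point is that the dg-module splits as a tensor product over these pieces: over $S_0$ we recover $\cA(S_0,S_0)$ as above; over an interval $I_a=[i,j]$ with $I_a\cap S=[i,j-1]$ and $I_a\cap S'=[i+1,j]$ the only contributing bijections send each $\ell\mapsto \ell+1$ after possibly applying interior swaps, and the resulting complex is the tensor product of the elementary $\homs(L_{\ell-1},L_\ell)$-type pieces, whose cohomology is a free rank-one module over $\cA'_{I_a}=\k[x_{i+1},\dots,x_j]$ generated by $u_{I_a}=u_{i+1}\otimes\cdots\otimes u_j$ (the ``missing'' generators $x_i,x_{j+1}$ are exactly those killed in passing from $\cA(S,S)$ and $\cA(S',S')$ to $\cA(S,S')$); symmetrically for $J_b$ with $v_{J_b}$. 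This gives \eqref{hom-S-S'-isom} and, by naturality of the tensor decomposition and of the $\homs(L,-)$-action on each elementary piece, the asserted $\End(L_{S'})$-$\End(L_S)$-bimodule structure, i.e.\ that the module generator $f_{S,S'}$ is annihilated on both sides exactly by the ideals generated by the absent $x_i$. Part (iii) then follows by the same bookkeeping: composing $f_{S,S'}$ with $f_{S',S''}$, on each elementary strand pair one gets either $\id$ (when the slide in $S\to S'$ is undone or continued coherently in $S'\to S''$) or the product $u_\ell v_\ell=x_\ell$ (when a slide $\ell\mapsto \ell+1$ in one step is matched by $\ell+1\mapsto \ell$ in the other, i.e.\ exactly for $[\ell-1,\ell]\subset T(S,S',S'')$), and one gets $0$ whenever the concatenated strand diagram forces a double crossing, which is the combinatorial meaning of $S$ and $S''$ not being close; since every morphism is $\cA(S',S')\cdot f\cdot$-generated, the bimodule structure plus \eqref{generators-composition-eq} determine all products.

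For part (iv), formality is immediate once one exhibits, inside $\bigoplus \homs(L_S,L_{S'})$, the $\k$-span of the monomials $x_{i_1}^{m_1}\cdots f_{S,S'}\cdots$ indexed by the Gröbner basis of the previous parts: by Lemma \ref{uv-acyclic-lem} applied componentwise this is a subcomplex on which the differential vanishes and the inclusion is a quasi-isomorphism, and it is closed under the product by the composition rule, hence a dg-subalgebra with trivial differential. The uniqueness-of-grading statement is pure algebra: a $D$-grading on $\cA^{\circ\circ}$ restricting to the prescribed $\deg(x_i)=d_i$ is the same as a function $S\mapsto d_S$ and $f_{S,S'}\mapsto d_{S,S'}$ with $d_{S'}+d_{\,x\text{'s in }\cA(S,S')}\cdot(\text{-})$ compatible with \eqref{generators-composition-eq}; because the $f_{S,S'}$ generate and the $L_S$ are all connected to each other through chains of close pairs, the value $d_{S,S'}-(d_{S'}-d_S)$ is forced once one normalizes, say, $d_{[1,n]}=0$ and follows the slide relations, so the grading is unique up to the stated gauge transformation $d_{S,S'}\mapsto d_{S,S'}+d_{S'}-d_S$. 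Existence for $D=\Z$ then follows from the general fact (Seidel's theory of graded Lagrangians via line fields, as recalled in \cite{seidelgraded,LPgentle}) that a line field on $\Sigma_k$ realizing the prescribed rotation numbers produces \emph{some} $\Z$-grading on $\mathcal{W}(M_{n,k},\Lambda)$, and any such grading must agree with the combinatorially forced one. The main obstacle I anticipate is bookkeeping the signs: organizing the Koszul signs in the tensor decomposition so that the differentials on the elementary pieces fit together, and checking the relations are preserved with the chosen signs (as in the $n=2,k=3$ case, where $\partial a_i=-\partial b_i=c_i$ fixes everything); this is routine but is where a careless argument would break, so I would set up an explicit ordered basis on each boundary component once and for all and propagate signs from there.
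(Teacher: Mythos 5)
Your outline for parts (i)--(iii) matches the paper's approach in broad strokes: reduce to the per-boundary-component dg-algebra analysis using Auroux's tensor formula, invoke Lemma \ref{uv-acyclic-lem} to identify the cohomology, decompose $\homs(L_S,L_{S'})$ along the subintervals of Corollary \ref{pair-partition-cor}, and observe that the composition rule and bimodule structure are forced by the strand-diagram bookkeeping. You are right that the heart of (i) is the quasi-isomorphism $\k[x_i,\ldots,x_{j+1}]/(x_i\cdots x_{j+1})\to \Endlc(L_{[i,j]})$, though note the paper proves this by a careful induction on $j-i$ with a filtration $C(0)\sub C(1)\sub E[i,j](0)$ and a comparison of exact triples, not just by ``killing the acyclic part''; your sketch gestures at this but doesn't exhibit the filtration.

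The genuine gap is in the uniqueness half of (iv). You argue that ``because the $L_S$ are all connected to each other through chains of close pairs, the value $d_{S,S'}-(d_{S'}-d_S)$ is forced once one normalizes, say, $d_{[1,n]}=0$ and follows the slide relations.'' That is a connectivity argument, and connectivity is not enough: solving the system of constraints \eqref{generators-composition-eq} for $(d_{S,S'})$ along a spanning tree always works, but the well-definedness (independence of the path of close pairs chosen) is obstructed by a class in $H^1$ of the combinatorial structure, not $H^0$. The paper proves the needed statement by constructing a simplicial complex $X=X(n,k)$ whose vertices are size-$n$ subsets, edges are close pairs, and $2$-simplices are pairwise close triples, and then proving by a nontrivial induction on $k$ (using the embeddings $i_k$, $j_k$ of $X(n,k-1)$, $X(n-1,k-1)$, and an auxiliary subcomplex $Y$) that $|X|$ is \emph{simply connected}; this gives $H^1(X,D)=0$ and hence uniqueness. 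Without an argument of this kind, nothing guarantees that two different chains of close pairs from $S$ to $S'$ yield the same forced value.

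A smaller point: for the existence of the $\Z$-grading in (iv), you appeal to ``a line field on $\Sigma_k$ realizing the prescribed rotation numbers,'' but the paper explicitly warns (in Section \ref{review}) that the relation between grading structures on $\Sigma$ and on $\Sym^n(\Sigma)$ is subtle and that the grading of $f_1\ot\cdots\ot f_n$ is \emph{not} the sum of the gradings of the $f_i$. The paper's existence argument does not pass through line fields on $\Sigma$ at all: it works directly with grading structures on $M_{n,k}$, which form a torsor over $H^1(M_{n,k};\Z)\simeq\Z^k$, and uses the observation that the homotopy class of the Reeb loop $(\mathrm{pt}\times\cdots\times\gamma_i\times\cdots\times\mathrm{pt})$ in $M_{n,k}\setminus\Delta$ is independent of the chosen points, so that $\deg(x_i)$ depends only on $i$. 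You should replace your line-field appeal with this argument, which is the content of (i).
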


\begin{proof} (i) Recall that a grading structure on $M_{n,k}$ is given by a fiberwise universal cover of the Lagrangian Grassmannian of the tangent bundle (\cite{seidelgraded}). Such a cover exists since $2c_1(M_{n,k})=0$, and the set of
possible gradings structures up to homotopy is a torsor over $H^1(M_{n,k},\Z) \simeq \mathbb{Z}^k$. As all our Lagrangians $L_S$ are contractible, they can be graded (uniquely up to a shift by $\Z$).
Changing the homotopy class of the grading by some cohomology class $c \in H^1(M_{n,k};\Z)\simeq H^1(M_{n,k},L;\Z)$, 
changes the degree of a Reeb chord $x \in \Endlc(L_S)$ by $\langle c , [x] \rangle$ where 
$[x] \in H_1(M_{n,k},L_S;\Z)$ (see \cite[Sec. 9a]{abouzseidel}). 

Now we use the fact that away from the big diagonal $\De\sub M_{n,k}$ the Reeb flow is just the product of the Reeb flows
on the surface. In addition, we observe that the homotopy class of any Reeb loop of the form 
$(pt. \times \ldots \times pt. \times \ga_i\times pt. \times \ldots \times pt.)$ in $M_{n,k}\setminus\De$, 
coming from the loop $\ga_i=u_iv_i$ in $\Sigma$, for fixed $i$ does not depend on the choice of points in the other components. It follows that the degree of the generator of $\Endlc(L_S)$ corresponding to such a loop is independent of $S$ (it depends
only on $i$).
Hence, the fact that grading structures on $M_{n,k}$ form a torsor over $H^1(M_{n,k},\Z)$ implies that for
a given $(d_1,d_2,\ldots, d_k) \in \mathbb{Z}^k$, there exists a unique grading structure such that $|x_i|=d_i$. 

Let us first calculate $\End(L_S)$ in the case $S=[i,j]$, where $i>0$, $j<k$. Let us set
$$E[i,j]:=\Endlc(L_{[i,j]}).$$
First, we claim that there is a natural quasiisomorphism of dg-algebras
\begin{equation}\label{end-H0-map}
k[x_i,\ldots,x_{j+1}]/(x_i\ldots x_{j+1})\to E[i,j],
\end{equation}
such that 
\begin{align*}
&x_i\mapsto (u_iv_i)\otimes \id \otimes \ldots, \\ 
&x_s\mapsto 
\ldots\otimes \id \otimes(v_su_s)\otimes\id\otimes\ldots+\ldots \otimes \id\otimes (u_sv_s)\otimes \id \otimes \ldots
\ \text{ for } i<s\le j,\\
&x_{j+1}\mapsto\ldots \otimes \id \otimes (v_{j+1}u_{j+1}). 
\end{align*}
Here we view the source as a complex with zero
differential.
It is easy to check that the morphisms corresponding to $x_s$ are closed, pairwise commute and their product is zero,
so the map \eqref{end-H0-map} is well defined.

Next, we want to prove that \eqref{end-H0-map} is a quasi-isomorphism.
We proceed by induction on $j-i$. In the base case $j=i$ this is easy to see.
For the induction step we start with a decomposition into a direct sum of subcomplexes,
$$E[i,j]=\bigoplus_{n\ge 0} E[i,j](n).$$
Here for $n>0$, $E[i,j](n)$ is spanned by elements of the form $(u_iv_i)^n\otimes \ldots$, while
$E[i,j](0)$  is spanned by the remaining basis elements.

It is easy to see that for $n>0$, $E[i,j](n)$ is isomorphic (by multiplication with $x_i^n$) with 
$E[i+1,j]$, so we know its cohomology from the induction assumption. More precisely,  we know
that the map 
$$x_i^n\cdot k[x_{i+1},\ldots,x_{j+1}]/(x_{i+1}\ldots x_{j+1})\to E[i,j](n)$$
is a quasiisomorphism.

It remains to deal with $E[i,j](0)$. Namely, we want to prove that 
the natural map 
$$k[x_{i+1},\ldots,x_{j+1}]\to E[i,j](0)$$
is a quasiisomorphism.

Let us define subcomplexes $C(0)\sub C(1)\sub E[i,j](0)$ as follows.
We define $C(0)$ to be the span of all elements of the form
\begin{align*}
&((v_{i+1}u_{i+1})^m\otimes (u_{i+1}v_{i+1})^n\otimes\id_{L_{i+2}}\otimes\ldots)_{m\ge 0,n>0}, \\ 
&(u_{i+1}(v_{i+1}u_{i+1})^m \otimes v_{i+1}(u_{i+1}v_{i+1})^n\otimes\id_{L_{i+2}}\otimes\ldots)_{m\ge 0, n\ge 0}.
\end{align*}
By Lemma \ref{uv-acyclic-lem}, the complex $C(0)$ is acyclic.

Next, we define $C(1)$ to be spanned by $C(0)$ and by all elements of the form $id_{L_i}\otimes\ldots$
We have an isomorphism 
$$E[i+1,j](0)\rTo{\sim} C(1)/C(0)$$ 
induced by the natural embedding of $E[i+1,j](0)$ into $C(1)$.
Thus, by the induction assumption,
the natural map
$$k[x_{i+2},\ldots,x_{j+1}]\to C(1)/C(0)$$
is a quasiisomorphism.

Finally, the quotient $E[i,j](0)/C(1)$ splits into a direct sum of subcomplexes $E(n)$ numbered by $n>0$,
where $E(n)$ is spanned by elements of the form $(v_{i+1}u_{i+1})^n\otimes\ldots$ (modulo $C(1)$).
Note that we have
$$x_{i+1}^n\equiv (v_{i+1}u_{i+1})^n\otimes\id\otimes\ldots \mod C(1),$$
and it is easy to see that the multiplication by $x_{i+1}^n$ gives an isomorphism of complexes,
$$E[i+1,j](0)\rTo{\sim} E(n).$$ 
Therefore, we deduce that the natural map
$$x_{i+1}^n k[x_{i+2},\ldots,x_{j+1}]\to E(n)$$
is a quasiisomorphism for every $n>0$.

Thus, we have a morphism of exact triples of complexes
\begin{diagram}
0& \rTo{}& k[x_{i+2},\ldots,x_{j+1}] &\rTo{}& k[x_{i+1},\ldots,x_{j+1}]&\rTo{}&\bigoplus_{n>0}x_{i+1}^nk[x_{i+2},\ldots,x_{j+1}]&\rTo{}
& 0\\
&&\dTo{}&&\dTo{}&&\dTo{}\\
0&\rTo{} & C(1)/C(0)&\rTo{}& E[i,j](0)/C(0)&\rTo{}& E[i,j](0)/C(1)&\rTo{}& 0
\end{diagram}
in which the left and right vertical arrows are quasiisomorphisms, hence, the middle one is also a quasiisomorphism.

Next, we can slightly modify the above argument for the cases $i=0$ and $j<k$ (resp., $i>0$ and $j=k$).
In these cases we still claim that the 
natural maps
$$k[x_1,\ldots,x_{j+1}]\to E[0,j], \ \ k[x_i,\ldots,x_k]\to E[i,k]$$
are quasiisomorphisms. Namely, in the case of $E[0,j]$ we skip the first step and just deal with this complex exactly as with
the complexes $E[i,j](0)$, considering the subcomplexes analogous to $C(0)$ and $C(1)$.
In the case of $E[i,k]$, we repeat exactly the same steps as for $E[i,j]$, considering first the subcomplexes
$E[i,k](n)$ and using the induction assumption to see that 
the maps
$$x_i^n\cdot k[x_{i+1},\ldots,x_k]\to E[i,k](n)$$
are quasiisomorphisms.

Now for $S=[i_1,j_1]\sqcup [i_2,j_2]\sqcup \ldots\sqcup [i_r,j_r]$ with $j_s+1<i_{s+1}$, we have a natural identification
$$\Endlc(L_S)\simeq \Endlc(L_{[i_1,j_1]})\ot \ldots\ot \Endlc(L_{[i_r,j_r]}),$$
and so we have a similar decomposition of $\End(L_S)$.

\noindent
(ii) Now let us compute $\Hom(L_S,L_{S'})$ for a pair of size $n$ subsets $S,S'\sub [0,k]$.
If $S$ and $S'$ are not close then we have $\homs(L_S,L_{S'})=0$.
Otherwise, we have a decomposition
\begin{equation}\label{hom-intervals-decomposition}
\homs(L_S,L_{S'})\simeq \Endlc\bigl(L_{([0,k]\setminus \sqcup_a I_a\sqcup\sqcup_b J_b)\cap S}\bigr)\ot 
\bigotimes_{a=1}^r \homs(L_{I_a\cap S},L_{I_a\cap S'})\ot\bigotimes_{b=1}^s \homs(L_{J_b\cap S},L_{J_b\cap S'}),
\end{equation}
where we use the subintervals $(I_a)$, $(J_b)$ from Corollary \ref{pair-partition-cor}.

It remains to identify the complexes $\homs(L_{I_a\cap S},L_{I_a\cap S'})$
and $\homs(L_{J_b\cap S},L_{J_b\cap S'})$. First, let $I=I_a=[i,j]$, so that $I\cap S=[i,j-1]$ and $I\cap S'=[i+1,j]$.
We have
$$\homs(L_{[i,j-1]},L_{[i+1,j]})=\bigotimes_{p\in [i,j-1]}\homs(L_p,L_{p+1}),$$
with the basis given by
$$u_{i+1}(v_{i+1}u_{i+1})^{m_1}\ot\ldots\ot u_j(v_ju_j)^{m_{j-i}}, \ \ m_1\ge 0,\ldots,m_{j-i}\ge 0.$$
It is easy to check that we have
$$u_{i+1}(v_{i+1}u_{i+1})^{m_1}\ot\ldots\ot u_j(v_ju_j)^{m_{j-i}}=
(x_{i+1}^{m_1}\ldots x_j^{m_{j-i}}) u_{[i,j]}= u_{[i,j]} (x_{i+1}^{m_1}\ldots x_j^{m_{j-i}}),$$
where we view the monomial in $x_1,\ldots,x_k$ as an endomorphism of $L_{[i,j-1]}$ or of $L_{[i+1,j]}$.
In particular, the differential on $\homs(L_{[i,j-1]},L_{[i+1,j]})$ is zero.

Similarly, we see that for $J=J_b=[i,j]$, the space $\homs(L_{[i+1,j]},L_{[i,j-1]})$ is spanned by
$$(x_{i+1}^{m_1}\ldots x_j^{m_{j-i}}) v_{[i,j]}= v_{[i,j]} (x_{i+1}^{m_1}\ldots x_j^{m_{j-i}}),$$
and has zero differential.

\noindent
(iii) Formula \eqref{generators-composition-eq} can be checked directly. To compose arbitrary elements
in $\Hom(L_S,L_{S'})$ and $\Hom(L_{S'},L_{S''})$ we can use (ii) to write them in the form
$f_{S,S'}p(x)$ and $q(x)f_{S',S''}$, with $p(x)$ and $q(x)$ some polynomials in $x_1,\ldots,x_k$, and then use
\eqref{generators-composition-eq}.

\noindent
(iv) The first assertion is clear from our previous computations: the subalgebra in question is generated by $f_{S,S'}$ and by
$x_i\in \mathrm{end}(L_S)$.
          
          By (i), we know that for each choice $\deg(x_i)=d_i\in\Z$,
there exists a $\Z$-grading on $\cA^{\circ\circ}$ coming from a grading structure on $M_{n,k}$. Thus, it is enough
to prove uniqueness of $(d_{S,S'})$ up to adding $d_{S'}-d_S$ (for given $d_i\in D$).
The numbers $d_{S,S'}$ are constrained by equations \eqref{generators-composition-eq}, corresponding
to triples $(S,S',S'')$ of pairwise close subsets. Thus, a difference between two systems $(d_{S,S'})$ is a $1$-cocycle
for the following simplicial complex $X$. The set of vertices, $X_0$, is the set of all size $n$ subsets in $[0,k]$.
The set of edges, $X_1$, is given by pairs $(S,S')$ of subsets which are close, and the set of $2$-simplices, $X_2$,
is the set of triples $(S,S',S'')$ of pairwise close subsets. Our uniqueness statement would follow from the vanishing
of $H^1(X,D)$. We claim that in fact the geometric realization $|X|_{\real}$ is simply connected. 
The proof is by induction on $k$. The case $k=1$
is straightforward (for $n=0$ we get a point and for $n=1$ we get a segment).
Another special case we have to include to make our proof work is $n=k+1$, when $X$ reduces to a point.

Let us write $X=X(n,k)$. We have natural embeddings  as simplicial subcomplexes,
$$i_k:X(n,k-1)\hra X(n,k): S\sub [0,k-1]\mapsto S,$$
$$j_k:X(n-1,k-1)\hra X(n,k): S\sub [0,k-1]\mapsto S\sqcup\{k\},$$
so that every vertex of $X(n,k)$ is either in the image of $i_k$ or in the image of $j_k$.

Let us in addition consider the subcomplex $Y\sub X(n,k)$ spanned by all vertices of the form
$S\sqcup\{k-1\}$ and $S\sqcup\{k\}$, with $S\sub [0,k-2]$, $|S|=n-1$. Note that whenever $S$ and $S'$ are close
subsets of $[0,k-2]$, all four subsets
$$S\sqcup\{k-1\}, \ \ S\sqcup\{k\}, \ \ S'\sqcup\{k-1\}, \ \ S'\sqcup\{k\}$$
are pairwise close. This implies that $|Y|_{\real}$ can be identified with the $2$-skeleton of
$|X(n-1,k-2)|_{\real}\times I$, where $I$ is a segment. In particular, by the induction assumption, $|Y|_{\real}$
is simply connected.

Now assume that we have an edge $e$ between a vertex $S$ in $i_k(X(n,k-1))$, where $S\subset [0,k-1]$
and a vertex $S'\sqcup\{k\}$ in $j_k(X(n-1,k-1))$. This means that $S$ and $S'\sqcup\{k\}$ are close,
which can happen only if $S=T\sqcup\{k-1\}$, with $T$ and $S'$ close. In this case the triangle
$$(S=T\sqcup\{k-1\}, T\sqcup\{k\}, S'\sqcup\{k\})$$
gives a homotopy between $e$ and a segment $(T\sqcup\{k-1\}, T\sqcup\{k\})$ in $Y$, followed by a segment in
$j_k(X(n-1,k-1))$.
Since $i_k(X(n,k-1))$, $j_k(X(n-1,k-1))$ and $Y$ are simply connected, this implies that $X(n,k)$ is also simply connected.
\end{proof}

\begin{cor} The algebra $\cA^{\circ\circ}$ has a structure of $\k[x_1,\ldots,x_k]$-algebra. In the case $n<k$, it has a structure
of an algebra over $R=\k[x_1,\ldots,x_k]/(x_1\ldots x_k)$.
\end{cor}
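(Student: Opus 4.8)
The plan is to exhibit $x_1,\dots,x_k$ as pairwise commuting central elements of the associative algebra $\cA^{\circ\circ}=\bigoplus_{S,S'}\Hom(L_S,L_{S'})$, and then, when $n<k$, to check that their product vanishes. First I would fix, for each size-$n$ subset $S=[i_1,j_1]\sqcup\dots\sqcup[i_r,j_r]\sub[0,k]$, the element $x_i^{(S)}\in\End(L_S)$ coming from the isomorphism $\End(L_S)\simeq\cA_{[i_1,j_1]}\ot\dots\ot\cA_{[i_r,j_r]}$ of Theorem \ref{A-side-morphisms-general-thm}(i): namely $x_i^{(S)}$ is the variable $x_i$ of this algebra if $i\in\{i_s,\dots,j_s+1\}$ for some (necessarily unique) $s$, and $x_i^{(S)}=0$ otherwise. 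I then set $x_i:=\sum_S x_i^{(S)}\in\cA^{\circ\circ}$. Each $x_i^{(S)}$ lies in a single tensor factor, the $x_i^{(S)}$ commute inside each $\End(L_S)$ by part (i), and the $\End(L_S)$ sit along orthogonal idempotents, so the $x_i$ commute with one another.

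Next I would verify centrality, i.e.\ $x_i^{(S')}\,f=f\,x_i^{(S)}$ for every $f\in\Hom(L_S,L_{S'})$. If $S$ and $S'$ are not close both sides vanish; otherwise part (ii) identifies $\Hom(L_S,L_{S'})$ with $\cA(S,S')\cdot f_{S,S'}$ as an $\cA(S',S')$--$\cA(S,S)$-bimodule, where the two actions factor through the surjections $\cA(S,S)\to\cA(S,S')$ and $\cA(S',S')\to\cA(S,S')$, each carrying $x_i$ to $x_i$ or to $0$. Hence both $x_i^{(S')}f$ and $fx_i^{(S)}$ equal $f$ times the image of $x_i$ in the commutative ring $\cA(S,S')$; inspecting the decomposition $\cA(S,S')=\cA(S_0,S_0)\ot\bigotimes_a\cA'_{I_a}\ot\bigotimes_b\cA'_{J_b}$ shows that whenever $x_i$ occurs in $\cA(S,S')$ it already occurs in both $\End(L_S)$ and $\End(L_{S'})$ with the same name, so the two images agree, and when $x_i$ is absent from $\cA(S,S')$ both sides are $0$. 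This produces a $\k$-algebra homomorphism $\k[x_1,\dots,x_k]\to Z(\cA^{\circ\circ})$, which is exactly a $\k[x_1,\dots,x_k]$-algebra structure on $\cA^{\circ\circ}$.

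Finally, assuming $n<k$, I would show $x_1\cdots x_k=0$, i.e.\ $\prod_{i=1}^k x_i^{(S)}=0$ in $\End(L_S)$ for every size-$n$ subset $S$. If some $x_i$ is absent from $\End(L_S)$ this is immediate. Otherwise $\bigcup_s\{i_s,\dots,j_s+1\}\supseteq[1,k]$, and an elementary count using $n<k$ produces a run $[i_s,j_s]$ that is \emph{interior}, i.e.\ $0<i_s$ and $j_s<k$: if every run touched $0$ or $k$ there would be at most two runs, forcing $S$ to be $[0,j_1]$, $[i_1,k]$, or $[0,j_1]\sqcup[i_2,k]$, and the covering condition would then force $|S|\ge k$ in each case, a contradiction. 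For such an interior run one has $\cA_{[i_s,j_s]}=\k[x_{i_s},\dots,x_{j_s+1}]/(x_{i_s}\cdots x_{j_s+1})$, so the tensor factor of $\prod_i x_i^{(S)}$ coming from it is $x_{i_s}\cdots x_{j_s+1}=0$. Hence $x_1\cdots x_k=0$, the map $\k[x_1,\dots,x_k]\to\cA^{\circ\circ}$ factors through $R=\k[x_1,\dots,x_k]/(x_1\cdots x_k)$, and $\cA^{\circ\circ}$ acquires the structure of an $R$-algebra.

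The only delicate point is the bookkeeping in the centrality step: one must make sure that the convention ``$x_i^{(S)}=0$ wherever $x_i$ is absent'' never conflicts with a genuinely nonzero action of $x_i$ from the other side on some $\Hom(L_S,L_{S'})$. This is precisely what the compatibility of the two surjections $\cA(S,S)\to\cA(S,S')\leftarrow\cA(S',S')$ in Theorem \ref{A-side-morphisms-general-thm}(ii) rules out; granting that, everything else is a routine unwinding of that theorem.
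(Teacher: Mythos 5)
The paper states this as an immediate consequence of Theorem \ref{A-side-morphisms-general-thm} and gives no separate argument, and your proposal is exactly the natural unwinding of parts (i)--(iii) of that theorem, so the approach coincides with what the paper intends. Your bookkeeping is correct: the variables of $\cA(S,S')$ are indeed a subset of those of both $\cA(S,S)$ and $\cA(S',S')$ (which is what makes the two surjections of part (ii) send $x_i$ to the same element, proving centrality), and the combinatorial count showing that when $n<k$ and every $x_i$ appears in $\End(L_S)$ there must be an interior run correctly yields the relation $x_1\cdots x_k=0$.
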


\begin{rmk}\label{OS-rem} Comparing our description of the algebra $\cA^{\circ\circ}$ in Theorem \ref{A-side-morphisms-general-thm}
with the definition of the Ozsv\'ath-Szab\'o's bordered algebras $\BB(m,k)$ in \cite[Sec.\ 3.2]{OS}, we see that there is an isomorphism
$$\cA^{\circ\circ}\simeq \BB(k,n),$$
which is analogous to the Auroux's identifications in \cite{aurouxggt} in the case of surfaces with one boundary component.
This raises a natural question of interpreting the bimodules defined in \cite{OS} in terms of Lagrangian correspondences.
\end{rmk}

\subsection{Localization by stops}

By the general stop removal theorem \cite[Theorem 1.16]{GPS2}, removing a stop corresponds to localizing by the subcategory of Lagrangians supported near the image of the stop. 
In our case the stops $\La_1$ and $\La_2$ are neighborhoods of the symplectic submanifolds $M_{n-1,k}$ and 
using the generators of the partially wrapped Fukaya category of $M_{n-1,k}$, we obtain that
these subcategories are given by
\[ \mathcal{D}_1 = \langle T_1 \times X : X = L_{i_1} \times \ldots \times L_{i_{n-1}} \rangle \]
\[ \mathcal{D}_2 = \langle T_2 \times X : X=L_{i_1} \times \ldots \times L_{i_{n-1}} \rangle   \]
where $\{i_1,\ldots, i_{n-1}\}$ runs through size $(n-1)$ subsets of $[0,k]$. 
Thus, we have the following result.

\begin{thm}\label{A-localization-thm} 
We have equivalences of pre-triangulated categories
\[ \mathcal{W}(M_{n,k}, \Lambda_{1}) \simeq \mathcal{W}(M_{n,k}, \Lambda) / \mathcal{D}_2 \]
\[  \mathcal{W}(M_{n,k}) \simeq \mathcal{W}(M_{n,k}, \Lambda) / \langle \mathcal{D}_1, \mathcal{D}_2 \rangle \]
\end{thm}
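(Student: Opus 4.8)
The plan is to deduce Theorem~\ref{A-localization-thm} as a direct consequence of the stop removal theorem of Ganatra--Pardon--Shende \cite[Theorem 1.16]{GPS2}, once we correctly identify the subcategory that must be killed when a stop is erased. Recall that the stop removal theorem asserts that for a Liouville sector (or manifold) $M$ with a stop $\mathfrak{f}$, removing part of the stop induces a quotient
\[
\mathcal{W}(M,\mathfrak{f}) \big/ \mathcal{D} \;\xrightarrow{\ \sim\ }\; \mathcal{W}(M,\mathfrak{f}\setminus\mathfrak{f}_0),
\]
where $\mathcal{D}$ is the full subcategory split-generated by the linking disks of the removed stop $\mathfrak{f}_0$. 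So the whole content is to show that the linking disks of $\Lambda_2$ (resp.\ $\Lambda_1$) are precisely the objects $T_2\times X$ (resp.\ $T_1\times X$) with $X = L_{i_1}\times\cdots\times L_{i_{n-1}}$ ranging over product Lagrangians for $(n{-}1)$-subsets of $[0,k]$, and symmetrically for both stops together.

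First I would recall that in the symmetric-product setup of Auroux, a stop of the form $\Lambda_i = q_i\times\Sym^{n-1}(\Sigma_k)$ is itself (a neighbourhood of) a copy of the partially wrapped manifold $M_{n-1,k}$, and its Liouville skeleton is swept out by products $c\times Y$ where $c$ is a small arc in $\Sigma_k$ linking the boundary point $q_i$ and $Y$ runs over Lagrangians in $\Sym^{n-1}(\Sigma_k)$. The key point is that the linking disk in $M_{n,k}$ of a point of $\Lambda_i$ lying over a point $(y_1,\dots,y_{n-1})\in\Sym^{n-1}(\Sigma_k)$ is the product $T_i\times (L_{i_1}\times\cdots\times L_{i_{n-1}})$, where $T_i$ is exactly the small Lagrangian arc near the stop $q_i$ drawn in Figure~\ref{puncsphere}, and the factors $L_{i_j}$ generate $\mathcal{W}(\Sigma_k,Z\setminus\{q_i\})$ hence $\mathcal{W}(M_{n-1,k},\Lambda_{3-i})$ by Auroux's generation theorem \cite[Theorem 1]{aurouxicm}. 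Thus as $X$ ranges over the generating product Lagrangians of $\mathcal{W}(M_{n-1,k})$, the objects $T_i\times X$ split-generate the subcategory of $\mathcal{W}(M_{n,k},\Lambda)$ supported near $\Lambda_i$; this gives $\mathcal{D}_1$ and $\mathcal{D}_2$ as stated. Since $\Lambda_1$ and $\Lambda_2$ lie in disjoint Liouville hypersurfaces, removing them one at a time (or both at once) is literally an instance of \cite[Theorem 1.16]{GPS2}, yielding the three equivalences $\mathcal{W}(M_{n,k},\Lambda_1)\simeq\mathcal{W}(M_{n,k},\Lambda)/\mathcal{D}_2$, the analogue with $\Lambda_1,\mathcal{D}_1$ exchanged, and $\mathcal{W}(M_{n,k})\simeq\mathcal{W}(M_{n,k},\Lambda)/\langle\mathcal{D}_1,\mathcal{D}_2\rangle$.

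The main obstacle is foundational rather than computational: one has to know that the stop removal theorem of \cite{GPS2} applies verbatim to the partially wrapped Fukaya categories of symmetric products as constructed by Auroux in \cite{aurouxggt}, \cite{aurouxicm}, i.e.\ that Auroux's model category is equivalent to the Liouville-sector-theoretic wrapped category to which \cite{GPS2} is phrased. This compatibility is implicit in the literature (Auroux's stops are Liouville hypersurfaces of the standard kind, and his categories are built from the same moduli of holomorphic strips), and one can either cite it or sketch the identification of the two setups. Once that is granted, the only remaining point to verify carefully is the identification of linking disks with the products $T_i\times X$ --- this is a local computation near $q_i\times\Sym^{n-1}(\Sigma_k)$ and follows from the product structure of the Reeb flow away from the diagonal, already invoked in the proof of Theorem~\ref{A-side-morphisms-general-thm}(i). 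I would present the proof as: (1) recall \cite[Theorem 1.16]{GPS2}; (2) observe $\Lambda_i\cong$ a neighbourhood of $M_{n-1,k}$ and identify its linking disks with $T_i\times X$; (3) invoke Auroux's generation result to see these objects split-generate $\mathcal{D}_i$; (4) conclude.
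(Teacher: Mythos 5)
Your proposal is correct and follows essentially the same route as the paper: the paper also treats Theorem~\ref{A-localization-thm} as a direct consequence of the stop removal theorem \cite[Theorem 1.16]{GPS2}, combined with the identification of $\Lambda_i$ as a neighbourhood of $M_{n-1,k}$ and Auroux's generation theorem \cite[Theorem 1]{aurouxicm} to recognize $\mathcal{D}_i$ as the subcategory split-generated by $T_i\times X$. The only differences are cosmetic: you spell out the linking-disk interpretation of \cite{GPS2} and flag the foundational compatibility between Auroux's model and the Liouville-sector framework of Ganatra--Pardon--Shende, both of which the paper leaves implicit.
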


\begin{rmk}
If we only consider one stop, say $q_1$, then the partially wrapped Fukaya category $\WW(\Sigma,q_1)$ is generated
by the Lagrangians $L_1,\ldots, L_k$ (i.e., we do not need $L_0$), as in this case the complement of $L_1,\ldots,L_k$
is a disk which contains at most 1 stop in its boundary. 
Furthermore, by Auroux's theorem \cite[Theorem 1]{aurouxicm},
the category $\WW(M_{n-1,k},\Lambda_1)$ is generated by $L_{i_1}\times \ldots \times L_{i_{n-1}}$ with 
$\{i_1,\ldots, i_{n-1}\}\sub [1,k]$.
This allows to reduce the number of generators in the definition of $\DD_1$ and $\DD_2$ above.
For the same reason, it is enough to use only products of $L_1,\ldots,L_k$ when generating the fully 
wrapped Fukaya category of $M_{n,k}$. 
\end{rmk}

We next give an explicit description of the objects in $\mathcal{D}_1$ and $\mathcal{D}_2$ in terms of the generators of $\mathcal{W}(M_{n,k}, \Lambda_{Z})$. 

\begin{prop}\label{stop-resolutions-prop} 
Let $X = L_{i_1} \times \ldots \times L_{i_{n-1}}$ and let $j_1 < j_2 \ldots < j_{k-n+2}$ be the complement of $\{i_1, \ldots, i_{n-1}\}$ in $[0,k]$, then in $\mathcal{W}(M_{n,k},\Lambda_{Z})$ we have the following equivalences
\begin{align*}
T_1 \times X &\simeq \{ L_{j_1} \times X \rTo{u_{[j_1,j_2]}} L_{j_2} \times X \to \ldots 
\rTo{u_{[j_{k-n+1},j_{k-n+2}]}} L_{j_{k-n+2}} \times X \}, \\ 
T_2 \times X &\simeq \{ L_{j_{k-n+2}} \times X \rTo{v_{[j_{k-n+1},j_{k-n+2}]}} L_{j_{k-n+1}} \times X \to \ldots \rTo{v_{[j_1,j_2]}} L_{j_1} \times X \} 
\end{align*} 
The first equivalence also holds in $\mathcal{W}(M_{n,k}, \Lambda_{1})$. 
\end{prop}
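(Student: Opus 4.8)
\emph{Proof idea.} It suffices to treat $T_1$; the statement for $T_2$ follows by reversing the orientation of $\Sigma_k$, which interchanges $q_1$ with $q_2$, the chords $u_i$ with $v_i$, and the order of the intervals. Everything is deduced from four ingredients: Auroux's exact triangle \eqref{exacttri}; the octahedral axiom; the fact recalled in Section \ref{review} that sliding one factor of a product Lagrangian along another factor yields an isomorphic object; and the morphism computation of Theorem \ref{A-side-morphisms-general-thm}, used both to see that the displayed expressions are genuine twisted complexes (the relevant compositions of structure maps vanish because the corresponding size-$n$ subsets are not close) and to identify the maps that occur when resolving them.

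\emph{The case $n=1$.} Here $X$ is the empty product, $T_1\times X=T_1$ is an arc in $\Sigma_k$, and the claim reads $T_1\simeq\{L_0\xrightarrow{u_1}L_1\xrightarrow{u_2}\cdots\xrightarrow{u_k}L_k\}$ in $\WW(\Sigma_k,Z)$. One iterates \eqref{exacttri} with $n=1$: $\Cone(u_1\colon L_0\to L_1)$ is the arc obtained from $L_0$ by sliding over the first interior puncture, and coning successively against (the residual copies of) $u_2,\dots,u_k$ slides once more over each further puncture. After $k$ steps one obtains an embedded arc with both endpoints on the outer boundary which, together with a boundary arc, cuts off a bigon containing the stop $q_1$ and no puncture; this arc is isotopic to $T_1$ once the conventions of Figure \ref{puncsphere} (fixing the position of $q_1$) are taken into account. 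As no slide in this chain crosses $q_2$, the same computation is valid in $\WW(\Sigma_k,q_1)$.

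\emph{The general case.} Fix $X=L_{i_1}\times\cdots\times L_{i_{n-1}}$ and let $j_1<\cdots<j_m$, $m=k-n+2$, be the complement of $\{i_1,\dots,i_{n-1}\}$ in $[0,k]$, so every index strictly between two consecutive $j$'s is one of the $i$'s. By Theorem \ref{A-side-morphisms-general-thm}(ii), $\Hom(L_{j_a}\times X,L_{j_c}\times X)=0$ for $c\ge a+2$, hence $Q:=\{L_{j_1}\times X\to\cdots\to L_{j_m}\times X\}$ is a well-defined twisted complex and we must identify it with $T_1\times X$. We resolve $Q$ from the left, one arrow at a time. The arrow $u_{[j_a,j_{a+1}]}\otimes\id$ shifts the consecutive block of factors $L_{j_a}\times L_{j_a+1}\times\cdots\times L_{j_{a+1}-1}$ (all but the first of which are factors of $X$) onto $L_{j_a+1}\times\cdots\times L_{j_{a+1}}$; writing this shift as a composition of the elementary single-factor moves $u_{j_a+1},u_{j_a+2},\dots,u_{j_{a+1}}$, repeated use of \eqref{exacttri} and the octahedron expresses the cone of each block map as an explicit iterated cone of product Lagrangians, successive terms differing by a single arc slide. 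Running through all the blocks and all the elementary moves, and repeatedly applying the isotopy lemma (an arc that lands parallel to a remaining factor gets absorbed, cancelling a pair of terms), one is left with the single term $N\times X$, where $N$ is obtained from $L_{j_1}$ by sliding successively over the punctures $j_1+1,\dots,j_m$; a final sequence of slides along the factors of $X$ at either end moves both endpoints of $N$ to the outer boundary. The resulting arc encloses all $k$ interior punctures, so by the same comparison with Figure \ref{puncsphere} as above it is isotopic to $T_1$ rel the arcs of $X$. Hence $Q\simeq N\times X\simeq T_1\times X$, and --- no slide in the argument having crossed $q_2$ --- this equivalence persists in $\WW(M_{n,k},\Lambda_1)$.

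\emph{Main obstacle.} The geometric skeleton (``slide the leading arc around until it becomes $T_1$'') is clear; the work is in the bookkeeping of the resolution. At each elementary step one must verify that the induced structure map on the partially resolved complex is exactly a signed copy of $u\otimes\id$ for the expected short chord and carries no further components --- this is where the formality and the explicit bimodule structure of $\Hom(L_S,L_{S'})$ from Theorem \ref{A-side-morphisms-general-thm} are essential, and where the cancellations collapsing the iterated cone down to a single product Lagrangian must be tracked honestly. A convenient alternative for the final step, since the $L_S$ generate $\WW(M_{n,k},\Lambda)$, is to match $\bigoplus_S\Hom(L_S,Q)$ with $\bigoplus_S\Hom(L_S,T_1\times X)$ as modules over $\cA^{\circ\circ}$, the latter computed directly from the product-Lagrangian structure of $T_1\times X$; here one must also check that the two gradings agree, which is delicate because gradings in $M_{n,k}$ are not additive over the factors. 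The remaining genuinely geometric point is the identification of the limiting arc as $T_1$ rather than $T_2$ --- a check of orientation conventions against Figure \ref{puncsphere} that is unambiguous once those are fixed but is responsible for the precise form of the two displayed equivalences.
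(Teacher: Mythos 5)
Your proposal follows essentially the same strategy as the paper: iterate the exact triangle \eqref{exacttri}, interspersing arc-slide isomorphisms along the factors of $X$, to collapse the twisted complex to a single product Lagrangian $\tilde T_i\times X$, and then slide this to $T_i\times X$. One small caution: the decomposition of each block map into ``elementary single-factor moves'' $u_{j_a+1},\dots,u_{j_{a+1}}$ cannot be read off literally as a composable chain in $\WW(M_{n,k},\Lambda_Z)$, since the naive intermediate objects $L_{j_a+r}\times X$ would have a repeated factor; the paper sidesteps this by first sliding $L_{j_{a+1}}$ adjacent to $L_{j_a}$ along the factors of $X$ and then taking a single cone for each consecutive pair $j_a,j_{a+1}$.
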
 

\begin{proof} This is obtained by repeatedly applying the exact triangle given by \eqref{exacttri}. More precisely, first as in Figure \ref{hamiso}, we slide $L_{j_{k+1}}$ along the components of $X$ until one of its legs gets next to $L_{j_k}$. This gives an 
isomorphism
$$L_{j_{k+1}} \times X \simeq \tilde{L}_{j_{k+1}} \times X.$$ 
Next, let us consider 
$$C: = \mathrm{cone}(\tilde{L}_{j_{k+1}} \times X \to L_{j_k} \times X).$$ 
Because of the above isomorphism
we also have an exact triangle
$$L_{j_{k+1}} \times X \to L_{j_k} \times X\to C\to\ldots$$
which is shown in Figure \ref{exacttrifig}.

Thus, applying this kind of triangle repeatedly, we can express $\tilde{T}_2 \times X$ drawn in Figure \ref{hamisotopy} as
\[ \tilde{T}_2 \times X \simeq \{ L_{j_{k-n+2}} \times X \to L_{j_{k-n+1}} \times X \to \ldots \to L_{j_1} \times X \}  \]
where $\tilde{T}_2$ has the property that all objects $L_0,\ldots, L_i$ to the left of the left leg of $\tilde{T}_2$ are in $X$ and all the objects $L_j, \ldots, L_k$ to the right of the right leg of $\tilde{T}_2$ are in $X$. Therefore, sliding $\tilde{T}_2$ over these Lagrangians, we exhibit a Hamiltonian isotopy between $\tilde{T}_2 \times X$and $T_2 \times X$, which completes the proof of the proposition for $T_2 \times X$. The case of $T_1 \times X$ is considered similarly.
\end{proof} 

\begin{figure}[ht!]
\centering
\begin{tikzpicture}

\begin{scope}[scale=0.8]

\tikzset{
  with arrows/.style={
    decoration={ markings,
      mark=at position #1 with {\arrow{>}}
    }, postaction={decorate}
  }, with arrows/.default=2mm,
} 

\tikzset{vertex/.style = {style=circle,draw, fill,  minimum size = 2pt,inner        sep=1pt}}
\def \radius {1.5cm}

\foreach \s in {2.5,3.5,4.5,5.5,6.5} {
  
   \draw[thick] ([shift=({360/10*(\s)}:\radius+1.5cm)]0,0) arc ({360/10 *(\s)}:{360/10*(\s+1)}:\radius+1.5cm);
}

\foreach \s in {7.5,8.5,9.5,10.5,11.5} {
  
   \draw[thick] ([shift=({360/10*(\s)}:\radius+1.5cm)]11,0) arc ({360/10 *(\s)}:{360/10*(\s+1)}:\radius+1.5cm);
}

\draw[thick] (0,3) -- (11,3);

\draw[thick] (0,-3) -- (11,-3);

\foreach \s in {0.5,1.5,2.5,3.5,4.5,5.5,6.5,7.5,8.5,9.5} {
 
    \draw[thick] ([shift=({360/10*(\s)}:\radius-1.1cm)]1,0) arc ({360/10 *(\s)}:{360/10*(\s+1)}:\radius-1.1cm);

    \draw[thick] ([shift=({360/10*(\s)}:\radius-1.1cm)]3,0) arc ({360/10 *(\s)}:{360/10*(\s+1)}:\radius-1.1cm);
    \draw[thick] ([shift=({360/10*(\s)}:\radius-1.1cm)]5,0) arc ({360/10 *(\s)}:{360/10*(\s+1)}:\radius-1.1cm);
  \draw[thick] ([shift=({360/10*(\s)}:\radius-1.1cm)]9,0) arc ({360/10 *(\s)}:{360/10*(\s+1)}:\radius-1.1cm);
  \draw[thick] ([shift=({360/10*(\s)}:\radius-1.1cm)]11,0) arc ({360/10 *(\s)}:{360/10*(\s+1)}:\radius-1.1cm);

}

\draw[with arrows] ([shift=({360/10*(7)}:\radius-1.1cm)]1,0) arc ({360/10*(7)}:{360/10*(5)}:\radius-1.1cm);

\draw[with arrows] ([shift=({360/10*(7)}:\radius-1.1cm)]3,0) arc ({360/10*(7)}:{360/10*(5)}:\radius-1.1cm);

\draw[with arrows] ([shift=({360/10*(7)}:\radius-1.1cm)]5,0) arc ({360/10*(7)}:{360/10*(5)}:\radius-1.1cm);

\draw[with arrows] ([shift=({360/10*(7)}:\radius-1.1cm)]9,0) arc ({360/10*(7)}:{360/10*(5)}:\radius-1.1cm);
 
\draw[with arrows] ([shift=({360/10*(7)}:\radius-1.1cm)]11,0) arc ({360/10*(7)}:{360/10*(5)}:\radius-1.1cm);

\draw[with arrows]({360/10 * (3)}:\radius+1.5cm) arc ({360/10 *(3)}:{360/10*(4)}:\radius+1.5cm);

\draw [blue, dashed] ([shift=({360/10*(10)}:\radius-1.1cm)]-1.5,0) --  ([shift=({360/10*(5)}:\radius-1.1cm)]0.5,0); 

\draw [blue] ([shift=({360/10*(10)}:\radius-1.1cm)]-0.5,0) --  ([shift=({360/10*(5)}:\radius-1.1cm)]1,0); 

\draw [red] ([shift=({360/10*(10)}:\radius-1.1cm)]1,0) -- ([shift=({360/10*(10)}:\radius+1.1cm)]0,0);  

\draw [blue] ([shift=({360/10*(10)}:\radius-1.1cm)]3,0) -- ([shift=({360/10*(10)}:\radius+1.1cm)]2,0);  

\draw [blue, dashed] ([shift=({360/10*(10)}:\radius-1.1cm)]5,0) -- ([shift=({360/10*(10)}:\radius+1.1cm)]3.5,0);

\draw [blue, dashed] ([shift=({360/10*(10)}:\radius-1.1cm)]7.5,0) --  ([shift=({360/10*(5)}:\radius-1.1cm)]9,0);

\draw [red] ([shift=({360/10*(10)}:\radius-1.1cm)]9,0) -- ([shift=({360/10*(10)}:\radius+1.1cm)]8,0);  

\draw [blue] ([shift=({360/10*(10)}:\radius-1.1cm)]11,0) -- ([shift=({360/10*(10)}:\radius+1.1cm)]9.5,0);  

\draw [blue, dashed] ([shift=({360/10*(10)}:\radius+1.1cm)]10.5,0) -- ([shift=({360/10*(10)}:\radius-1.1cm)]11,0);  

\draw [red] ([shift=({360/10*(7.5)}:\radius-1.1cm)]3,0) to[in=210, out=330] ([shift=({360/10*(6)}:\radius-1.1cm)]11,0);

\draw [red] ([shift=({360/10*(7.5)}:\radius-1.1cm)]1,0) to[in=210, out=330] ([shift=({360/10*(7.5)}:\radius-1.1cm)]11,0);  

\node[red] at (3, -1) {\small $C$};
\node[red] at (2,0.3) {\small $L_{j_k}$};
\node[red] at (10,0.3) {\small $L_{j_{k+1}}$};
\node[red] at (7,-0.9) {\small $\tilde{L}_{j_{k+1}}$};

\node[vertex] at  ([shift=({360/10*(7.5)}:\radius+1.5cm)]5,0) {};
\node[vertex] at  ([shift=({360/10*(2.5)}:\radius+1.5cm)]5,0) {};

\end{scope}

\end{tikzpicture}
    \caption{Hamiltonian isotopy of $L_{j_{k+1}} \times X \to \tilde{L}_{j_{k+1}} \times X$}
    \label{hamiso}
\end{figure}

\begin{figure}[ht!]
\centering
\begin{tikzpicture}

\begin{scope}[scale=0.8]

\tikzset{
  with arrows/.style={
    decoration={ markings,
      mark=at position #1 with {\arrow{>}}
    }, postaction={decorate}
  }, with arrows/.default=2mm,
} 

\tikzset{vertex/.style = {style=circle,draw, fill,  minimum size = 2pt,inner        sep=1pt}}
\def \radius {1.5cm}

\foreach \s in {2.5,3.5,4.5,5.5,6.5} {
  
   \draw[thick] ([shift=({360/10*(\s)}:\radius+1.5cm)]0,0) arc ({360/10 *(\s)}:{360/10*(\s+1)}:\radius+1.5cm);
}

\foreach \s in {7.5,8.5,9.5,10.5,11.5} {
  
   \draw[thick] ([shift=({360/10*(\s)}:\radius+1.5cm)]11,0) arc ({360/10 *(\s)}:{360/10*(\s+1)}:\radius+1.5cm);
}

\draw[thick] (0,3) -- (11,3);

\draw[thick] (0,-3) -- (11,-3);

\foreach \s in {0.5,1.5,2.5,3.5,4.5,5.5,6.5,7.5,8.5,9.5} {
 
    \draw[thick] ([shift=({360/10*(\s)}:\radius-1.1cm)]1,0) arc ({360/10 *(\s)}:{360/10*(\s+1)}:\radius-1.1cm);

    \draw[thick] ([shift=({360/10*(\s)}:\radius-1.1cm)]3,0) arc ({360/10 *(\s)}:{360/10*(\s+1)}:\radius-1.1cm);
    \draw[thick] ([shift=({360/10*(\s)}:\radius-1.1cm)]5,0) arc ({360/10 *(\s)}:{360/10*(\s+1)}:\radius-1.1cm);
  \draw[thick] ([shift=({360/10*(\s)}:\radius-1.1cm)]9,0) arc ({360/10 *(\s)}:{360/10*(\s+1)}:\radius-1.1cm);
  \draw[thick] ([shift=({360/10*(\s)}:\radius-1.1cm)]11,0) arc ({360/10 *(\s)}:{360/10*(\s+1)}:\radius-1.1cm);

}

\draw[with arrows] ([shift=({360/10*(7)}:\radius-1.1cm)]1,0) arc ({360/10*(7)}:{360/10*(5)}:\radius-1.1cm);

\draw[with arrows] ([shift=({360/10*(7)}:\radius-1.1cm)]3,0) arc ({360/10*(7)}:{360/10*(5)}:\radius-1.1cm);

\draw[with arrows] ([shift=({360/10*(7)}:\radius-1.1cm)]5,0) arc ({360/10*(7)}:{360/10*(5)}:\radius-1.1cm);

\draw[with arrows] ([shift=({360/10*(7)}:\radius-1.1cm)]9,0) arc ({360/10*(7)}:{360/10*(5)}:\radius-1.1cm);
 
\draw[with arrows] ([shift=({360/10*(7)}:\radius-1.1cm)]11,0) arc ({360/10*(7)}:{360/10*(5)}:\radius-1.1cm);

\draw[with arrows]({360/10 * (3)}:\radius+1.5cm) arc ({360/10 *(3)}:{360/10*(4)}:\radius+1.5cm);

\draw [blue, dashed] ([shift=({360/10*(10)}:\radius-1.1cm)]-1.5,0) --  ([shift=({360/10*(5)}:\radius-1.1cm)]0.5,0); 

\draw [blue] ([shift=({360/10*(10)}:\radius-1.1cm)]-0.5,0) --  ([shift=({360/10*(5)}:\radius-1.1cm)]1,0); 

\draw [red] ([shift=({360/10*(10)}:\radius-1.1cm)]1,0) -- ([shift=({360/10*(10)}:\radius+1.1cm)]0,0);  

\draw [blue] ([shift=({360/10*(10)}:\radius-1.1cm)]3,0) -- ([shift=({360/10*(10)}:\radius+1.1cm)]2,0);  

\draw [blue, dashed] ([shift=({360/10*(10)}:\radius-1.1cm)]5,0) -- ([shift=({360/10*(10)}:\radius+1.1cm)]3.5,0);

\draw [blue, dashed] ([shift=({360/10*(10)}:\radius-1.1cm)]7.5,0) --  ([shift=({360/10*(5)}:\radius-1.1cm)]9,0);

\draw [red] ([shift=({360/10*(10)}:\radius-1.1cm)]9,0) -- ([shift=({360/10*(10)}:\radius+1.1cm)]8,0);  

\draw [blue] ([shift=({360/10*(10)}:\radius-1.1cm)]11,0) -- ([shift=({360/10*(10)}:\radius+1.1cm)]9.5,0);  

\draw [blue, dashed] ([shift=({360/10*(10)}:\radius+1.1cm)]10.5,0) -- ([shift=({360/10*(10)}:\radius-1.1cm)]11,0);

\draw [red] ([shift=({360/10*(7.5)}:\radius-1.1cm)]1,0) to[in=210, out=330] ([shift=({360/10*(7.5)}:\radius-1.1cm)]11,0);  
\


\node[red] at (2,0.3) {\small $L_{j_k}$};
\node[red] at (10,0.3) {\small $L_{j_{k+1}}$};




\node[vertex] at  ([shift=({360/10*(7.5)}:\radius+1.5cm)]5,0) {};
\node[vertex] at  ([shift=({360/10*(2.5)}:\radius+1.5cm)]5,0) {};

\node[red] at (5.5, -1.4) {\small $C$};



\end{scope}

\end{tikzpicture}
    \caption{Exact triangle}
    \label{exacttrifig}
\end{figure}

\begin{figure}[ht!]
\centering
\begin{tikzpicture}

\begin{scope}[scale=0.8]

\tikzset{
  with arrows/.style={
    decoration={ markings,
      mark=at position #1 with {\arrow{>}}
    }, postaction={decorate}
  }, with arrows/.default=2mm,
} 

\tikzset{vertex/.style = {style=circle,draw, fill,  minimum size = 2pt,inner        sep=1pt}}
\def \radius {1.5cm}

\foreach \s in {2.5,3.5,4.5,5.5,6.5} {
  
   \draw[thick] ([shift=({360/10*(\s)}:\radius+1.5cm)]0,0) arc ({360/10 *(\s)}:{360/10*(\s+1)}:\radius+1.5cm);
}

\foreach \s in {7.5,8.5,9.5,10.5,11.5} {
  
   \draw[thick] ([shift=({360/10*(\s)}:\radius+1.5cm)]11,0) arc ({360/10 *(\s)}:{360/10*(\s+1)}:\radius+1.5cm);
}

\draw[thick] (0,3) -- (11,3);

\draw[thick] (0,-3) -- (11,-3);

\foreach \s in {0.5,1.5,2.5,3.5,4.5,5.5,6.5,7.5,8.5,9.5} {
 
    \draw[thick] ([shift=({360/10*(\s)}:\radius-1.1cm)]1,0) arc ({360/10 *(\s)}:{360/10*(\s+1)}:\radius-1.1cm);

    \draw[thick] ([shift=({360/10*(\s)}:\radius-1.1cm)]-1,0) arc ({360/10 *(\s)}:{360/10*(\s+1)}:\radius-1.1cm);
  \draw[thick] ([shift=({360/10*(\s)}:\radius-1.1cm)]12,0) arc ({360/10 *(\s)}:{360/10*(\s+1)}:\radius-1.1cm);
  \draw[thick] ([shift=({360/10*(\s)}:\radius-1.1cm)]10,0) arc ({360/10 *(\s)}:{360/10*(\s+1)}:\radius-1.1cm);

}

\draw[with arrows] ([shift=({360/10*(7)}:\radius-1.1cm)]1,0) arc ({360/10*(7)}:{360/10*(5)}:\radius-1.1cm);

\draw[with arrows] ([shift=({360/10*(7)}:\radius-1.1cm)]-1,0) arc ({360/10*(7)}:{360/10*(5)}:\radius-1.1cm);

\draw[with arrows] ([shift=({360/10*(7)}:\radius-1.1cm)]10,0) arc ({360/10*(7)}:{360/10*(5)}:\radius-1.1cm);

\draw[with arrows] ([shift=({360/10*(7)}:\radius-1.1cm)]12,0) arc ({360/10*(7)}:{360/10*(5)}:\radius-1.1cm);
 
\draw[with arrows]({360/10 * (3)}:\radius+1.5cm) arc ({360/10 *(3)}:{360/10*(4)}:\radius+1.5cm);

\draw [blue, dashed] ([shift=({360/10*(10)}:\radius-1.1cm)]0.2,0) --  ([shift=({360/10*(5)}:\radius-1.1cm)]0.5,0); 

\draw [blue, dashed] ([shift=({360/10*(10)}:\radius-1.1cm)]-1,0) --  ([shift=({360/10*(5)}:\radius-1.1cm)]0.2,0); 

\draw [blue] ([shift=({360/10*(10)}:\radius-1.1cm)]-3.4,0) --  ([shift=({360/10*(5)}:\radius-1.1cm)]-1,0); 

\draw [blue, dashed] ([shift=({360/10*(10)}:\radius-1.1cm)]11.2,0) --  ([shift=({360/10*(5)}:\radius-1.1cm)]11.5,0); 

\draw [blue, dashed] ([shift=({360/10*(10)}:\radius-1.1cm)]10,0) --  ([shift=({360/10*(5)}:\radius-1.1cm)]11.2,0); 

\draw [black, dashed] ([shift=({360/10*(10)}:\radius-1.1cm)]6.2,0) --  ([shift=({360/10*(5)}:\radius-1.1cm)]3.7,0);

\draw [blue] ([shift=({360/10*(10)}:\radius-1.1cm)]12,0) --  ([shift=({360/10*(5)}:\radius-1.1cm)]14.4,0);

\draw [red] ([shift=({360/10*(7.5)}:\radius-1.1cm)]1,0) to[in=210, out=330] ([shift=({360/10*(7.5)}:\radius-1.1cm)]10,0);  

\node[blue] at (-2,0.3) {\small $L_{0}$};
\node[blue] at (13,0.3) {\small $L_{k}$};

\draw[purple] (4.8, -3) to[in=180,out=90] (5,-2.7);
\draw[purple] (5, -2.7) to[in=90,out=0] (5.2,-3);

\node[purple] at ([shift=({360/10*(7)}:\radius+1.5cm)]5.5,0.4) {\small $T_2$}; 

\node[red] at (7,-0.9) {\small $\tilde{T}_2$};

\node[vertex] at  ([shift=({360/10*(7.5)}:\radius+1.5cm)]5,0) {};
\node[vertex] at  ([shift=({360/10*(2.5)}:\radius+1.5cm)]5,0) {};



\end{scope}

\end{tikzpicture}
    \caption{Hamiltonian isotopy $T_2 \times X \to \tilde{T}_2 \times X$}
    \label{hamisotopy}
\end{figure}

\section{B-side}\label{B-sec}
\subsection{Two categorical resolutions on the B-side}

Recall that we work over a base commutative ring $\mathbf{k}$.
Let $R=R_{[1,k]}$ denote the ring $\mathbf{k}[x_1,\ldots,x_k]/(x_1\ldots x_k)$.
Here we define two categorical resolutions of the category $\Perf(R)$.
Both are given using modules over certain finite $R$-algebras.

For every subset $I\sub [1,k]$, let us set $x_I:=\prod_{i\in I}x_i$.
We consider two $R$-algebras, 
$$\BB^{\circ}=\BB^{\circ}_{[1,k]}:=\End_R(R/(x_1)\oplus R/(x_{[1,2]})\oplus \ldots \oplus R/(x_{[1,k-1]})\oplus R),$$
$$\BB^{\circ\circ}=\BB^{\circ\circ}_{[1,k]}:=\End_R(\bigoplus_{I\sub [1,k],I\neq\emptyset}R/(x_I)),$$
where the summation is over all nonempty subintervals of $[1,k]$. 
Note that both $\BB^\circ$ and $\BB^{\circ\circ}$ are finitely generated as $R$-modules
(in particular, they are Noetherian when $\k$ is Noetherian).
  
For each nonempty subinterval $I\sub [1,k]$ (resp., for $I=[1,m]$) we denote by $P_I$ the natural projective
${\BB^{\circ\circ}}$-module (resp., $\BB^{\circ}$-module) corresponding to the summand $R/(x_I)$. 
When we need to distinguish the $\BB^{\circ}$-module
$P_{[1,m]}$ from the ${\BB^{\circ\circ}}$-module with the same name, we will write $P_{[1,m]}^{\circ}$ 
(resp., $P_{[1,m]}^{\circ\circ}$) to denote the
${\BB^{\circ}}$-module (resp., ${\BB^{\circ\circ}}$-module).

Note that every ${\BB^{\circ\circ}}$-module $M$, viewed as an $R$-module, has a decomposition
$$M=\bigoplus_{I\sub [1,k],I\neq\emptyset} M_I,$$
with $M_I=\Hom_{\BB^{\circ\circ}}(P_I,M)$,
corresponding to the natural idempotents in ${\BB^{\circ\circ}}$.  

It is easy to see that we have natural identifications
$$\Hom(P_I,P_J)\simeq\Hom_R(R/(x_J),R/(x_I)),$$
compatible with composition.
In particular, $\End(P_I)=R/(x_I)$, and
$$({\BB^{\circ}})^{op}=\End_{\BB^{\circ\circ}}(\bigoplus_{m=1}^k P_{[1,m]}).$$

Thus, if we set $P=P_{[1,k]}$, then we get natural faithful functors 
$$i_R^{\BB^{\circ}}:\Perf(R)\to \Perf({\BB^{\circ}}): M\mapsto P\ot_R M,$$
$$i_{\BB^{\circ}}^{\BB^{\circ\circ}}:\Perf({\BB^{\circ}})\to \Perf({\BB^{\circ\circ}}):M\mapsto 
(\bigoplus_{m=1}^k P_{[1,m]})\ot_{\BB^{\circ}} M,$$
which are left adjoint to the restriction functors
$$r^{\BB^{\circ}}_R:D({\BB^{\circ}})\to D(R): M\mapsto\Hom_{\BB^{\circ}}(P,M),$$
\begin{equation}\label{resfun1-eq}
r^{\BB^{\circ\circ}}_{\BB^{\circ}}:D({\BB^{\circ\circ}})\to D({\BB^{\circ}}): M\mapsto\Hom_{\BB^{\circ\circ}}(\bigoplus_{m=1}^k P_{[1,m]},M).
\end{equation}
Here we denote by $D(A)$ the bounded derived category of $A$-modules,
while $\Perf(A)$ is the full subcategory of bounded complexes of finitely generated projective modules.
We also consider composed functors $i_R^{\BB^{\circ\circ}}=i_{\BB^{\circ}}^{\BB^{\circ\circ}}\circ i_R^{\BB^{\circ}}$ and 
\begin{equation}\label{resfun2-eq}
r^{\BB^{\circ\circ}}_R=r^{\BB^{\circ}}_R\circ r^{\BB^{\circ\circ}}_{\BB^{\circ}}.
\end{equation}

We are going to show that the algebras ${\BB^{\circ\circ}}$ and ${\BB^{\circ}}$ are homologically smooth over $\k$ and that
the restriction functors $r^{\BB^{\circ\circ}}_{\BB^{\circ}}$ and $r^{\BB^{\circ\circ}}_R$ are localization functors with respect to explicit subcategories
of $D^b({\BB^{\circ\circ}})$ (provided $\k$ is regular), 
whereas $i_R^{\BB^{\circ}}$ and $i_R^{\BB^{\circ\circ}}$ are fully faithful, so that ${\BB^{\circ\circ}}$ and ${\BB^{\circ}}$ provide categorical resolutions
of $R$.

For a subinterval $I\sub [1,k]$,
let us denote by $R_I$, $\BB^{\circ\circ}_I$ and $\BB^{\circ}_I$ the algebras defined in the same way
as $R_{[1,k]}$, $\BB^{\circ\circ}_{[1,k]}$ and $\BB^{\circ}_{[1,k]}$, but with the variables $x_1,\ldots,x_k$ replaced by $(x_i)_{i\in I}$.

We will need to use some other induction functors, in addition to $i^{\BB^{\circ\circ}}_{\BB^{\circ}}$ and $i^{\BB^{\circ}}_R$.
Let us consider the natural (exact) restriction functors
$$r^{\BB^{\circ\circ}}_{\BB^{\circ\circ}_{[1,k-1]}[x_k]}:{\BB^{\circ\circ}}-\mod\to \BB^{\circ\circ}_{[1,k-1]}[x_k]-\mod: M\mapsto \Hom_{\BB^{\circ\circ}}(\bigoplus_{J\sub [1,k-1],J\neq\emptyset}P_J,M),$$
$$r^{\BB^{\circ\circ}}_{\BB^{\circ\circ}_{[2,k]}[x_1]}:{\BB^{\circ\circ}}-\mod\to \BB^{\circ\circ}_{[2,k]}[x_1]-\mod: M\mapsto \Hom_{\BB^{\circ\circ}}(\bigoplus_{J\sub [2,k],J\neq\emptyset}P_J,M),$$
where we use the identifications
$$R_{[1,k]}/x_{[1,k-1]}=R_{[1,k-1]}[x_k], \ \ R_{[1,k]}/x_{[2,k]}=R_{[2,k]}[x_1].$$
It is easy to see that in both cases the restricted module has the decomposition
into components (which are $R$-modules) of the form
$$r^{\BB^{\circ\circ}}_{\BB^{\circ\circ}_I}(M)=\bigoplus_{J\sub I,J\neq\emptyset} M_J.$$

\begin{lem}\label{res-ind-lem} 
(i) One has natural isomorphisms
$$i^{\BB^{\circ\circ}}_{\BB^{\circ}}(P^\circ_{[1,i]})\simeq P^{\circ\circ}_{[1,i]}, \ \ i^{\BB^{\circ}}_R(R)\simeq P=P_{[1,k]},$$
where $1\le i\le k$.
The canonical adjunction maps
$$\Id\to r^{\BB^{\circ\circ}}_{\BB^{\circ}}\circ i^{\BB^{\circ\circ}}_{\BB^{\circ}}, \ \ \Id\to r^{\BB^{\circ}}_R\circ i^{\BB^{\circ}}_R$$
are isomorphisms on perfect derived categories and on abelian categories.

\noindent
(ii) The restriction functor $r^{\BB^{\circ\circ}}_{\BB^{\circ\circ}_{[2,k]}[x_1]}$ has a left adjoint functor
$$i^{\BB^{\circ\circ}}_{\BB^{\circ\circ}_{[2,k]}[x_1]}:\BB^{\circ\circ}_{[2,k]}-\mod\to {\BB^{\circ\circ}}-\mod$$
such that $r^{\BB^{\circ\circ}}_{\BB^{\circ\circ}_{[2,k]}[x_1]}\circ i^{\BB^{\circ\circ}}_{\BB^{\circ\circ}_{[2,k]}[x_1]}\simeq \Id$ and
$$i^{\BB^{\circ\circ}}_{\BB^{\circ\circ}_{[2,k]}[x_1]}(M)_I=\begin{cases} M_{I\cap [2,k]} & I\neq [1], \\ 0 & I=[1].\end{cases}$$
In particular,  the functor $i^{\BB^{\circ\circ}}_{\BB^{\circ\circ}_{[2,k]}[x_1]}$ is exact and 
$r^{\BB^{\circ\circ}}_{\BB^{\circ\circ}_{[2,k]}[x_1]}\circ i^{\BB^{\circ\circ}}_{\BB^{\circ\circ}_{[2,k]}[x_1]}=\Id$.
For $I\sub [2,k]$, one has
$$i^{\BB^{\circ\circ}}_{\BB^{\circ\circ}_{[2,k]}[x_1]}(P_{[2,k]}[x_1])\simeq P_{[2,k]}.$$

\noindent
(iii) The restriction functor $r^{\BB^{\circ}}_{\BB^{\circ}_{[1,k-1]}[x_k]}:{\BB^{\circ}}-\mod\to \BB^{\circ}_{[1,k-1]}[x_k]-\mod$ has a left
adjoint functor 
$$i^{\BB^{\circ}}_{\BB^{\circ}_{[1,k-1]}[x_k]}:\BB^{\circ}_{[1,k-1]}[x_k]-\mod\to {\BB^{\circ}}-\mod,$$
which is exact and satisfies $r^{\BB^{\circ}}_{\BB^{\circ}_{[1,k-1]}[x_k]}\circ i^{\BB^{\circ}}_{\BB^{\circ}_{[1,k-1]}[x_k]}=\Id$.
Furthermore, for $1\le i\le k-1$, we have
$$i^{\BB^{\circ}}_{\BB^{\circ}_{[1,k-1]}[x_k]}(P_{[1,i]}[x_k])\simeq P_{[1,i]}.$$
\end{lem}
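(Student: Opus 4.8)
The plan is to realize all three restriction/induction pairs as instances of the ``corner algebra'' formalism $\epsilon A\epsilon\sub A$ for an idempotent $\epsilon$, and then to reduce the only non-formal point to a single annihilator computation in a polynomial ring.

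\emph{Setup.} Write $e_I\in\BB^{\circ\circ}$ for the idempotent cutting out the summand $R/(x_I)$. I would first record three identifications of algebras: $\BB^{\circ}=\tilde e\,\BB^{\circ\circ}\tilde e$ with $\tilde e=\sum_{m=1}^{k}e_{[1,m]}$; $R=e_{[1,k]}\,\BB^{\circ}e_{[1,k]}$; $\BB^{\circ\circ}_{[2,k]}[x_1]=e\,\BB^{\circ\circ}e$ with $e=\sum_{\emptyset\neq J\sub[2,k]}e_J$; and $\BB^{\circ}_{[1,k-1]}[x_k]=e'\,\BB^{\circ}e'$ with $e'=\sum_{m=1}^{k-1}e_{[1,m]}$. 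Each follows from the observation that the direct sum of the cyclic modules $R_{[1,k]}/(x_I)$ occurring in a given case consists of modules over a common quotient ring of $R_{[1,k]}$ (respectively $R_{[1,k]}$, $R_{[1,k]}$, $R_{[1,k]}/(x_{[2,k]})=R_{[2,k]}[x_1]$, $R_{[1,k]}/(x_{[1,k-1]})=R_{[1,k-1]}[x_k]$), so the endomorphism algebra over $R_{[1,k]}$ coincides with the one computed over that quotient. Under these identifications each restriction functor of the lemma becomes $M\mapsto\epsilon M$, and its (unique) left adjoint is the idempotent induction $N\mapsto A\epsilon\ot_{\epsilon A\epsilon}N$. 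In particular $r\circ i\simeq\Id$ comes from the canonical isomorphism $\epsilon A\epsilon\ot_{\epsilon A\epsilon}N\to N$, and the stated values on projectives are immediate idempotent manipulations, e.g.\ $i^{\BB^{\circ\circ}}_{\BB^{\circ}}(P^{\circ}_{[1,i]})=\BB^{\circ\circ}\tilde e\ot_{\tilde e\BB^{\circ\circ}\tilde e}\tilde e\BB^{\circ\circ}e_{[1,i]}=\BB^{\circ\circ}e_{[1,i]}=P^{\circ\circ}_{[1,i]}$, and likewise $i^{\BB^{\circ}}_R(R)=P$, $i^{\BB^{\circ\circ}}_{\BB^{\circ\circ}_{[2,k]}[x_1]}(P_{[2,k]}[x_1])=P_{[2,k]}$ and $i^{\BB^{\circ}}_{\BB^{\circ}_{[1,k-1]}[x_k]}(P_{[1,i]}[x_k])=P^{\circ}_{[1,i]}$.

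\emph{Part (i).} This is then finished: the adjunction unit $\Id\to r\circ i$ for idempotent induction is the canonical isomorphism above, hence an isomorphism of abelian categories; it restricts to perfect complexes because $A\epsilon$ is a direct summand of $A$ and $i$ carries the projective generators to projectives, so both functors preserve perfectness.

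\emph{Parts (ii) and (iii).} Here the extra content is exactness of $i$ together with the component description, and I would obtain both at once by proving that $\BB^{\circ\circ}e$ is a finitely generated \emph{projective} right $e\BB^{\circ\circ}e$-module (and similarly $\BB^{\circ}e'$ over $e'\BB^{\circ}e'$). Decompose $\BB^{\circ\circ}e=\bigoplus_{\emptyset\neq I\sub[1,k]}e_I\BB^{\circ\circ}e$. Rows with $I\sub[2,k]$ are direct summands of $e\BB^{\circ\circ}e$, hence projective. The row $I=[1]$ vanishes: each $\Hom_R(R/(x_J),R/(x_1))$ with $\emptyset\neq J\sub[2,k]$ is the annihilator of the monomial $x_J$ in the polynomial ring $R/(x_1)=\k[x_2,\dots,x_k]$, which is $0$ over an arbitrary $\k$. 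For $I=[1,j]$ with $2\le j\le k$, multiplication by $x_1$ defines an injective $R$-linear map $\mu_j\colon R/(x_{[2,j]})\hra R/(x_{[1,j]})$ (well-definedness and injectivity both reduce to $x_1$ being a non-zero-divisor in $\k[x_1,\dots,x_k]$) with $\coker\mu_j\cong R/(x_1)$; since $\Hom_R(R/(x_J),R/(x_1))=0$ for $\emptyset\neq J\sub[2,k]$, postcomposition with $\mu_j$ is an isomorphism of right $e\BB^{\circ\circ}e$-modules $e_{[2,j]}\BB^{\circ\circ}e\xrightarrow{\sim}e_{[1,j]}\BB^{\circ\circ}e$. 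Thus $\BB^{\circ\circ}e$ is a finite direct sum of summands of $e\BB^{\circ\circ}e$; hence $i=\BB^{\circ\circ}e\ot_{e\BB^{\circ\circ}e}(-)$ is exact, and $i(N)_I=e_I\BB^{\circ\circ}e\ot_{e\BB^{\circ\circ}e}N\cong e_{I\cap[2,k]}N=N_{I\cap[2,k]}$ (reading $e_\emptyset=0$, so $i(N)_{[1]}=0$), as asserted; $r\circ i=\Id$ and the value on $P_{[2,k]}[x_1]$ are as in the setup. Part (iii) is structurally identical, replacing $\mu_j$ by the injective map $R/(x_{[1,k-1]})\xrightarrow{\cdot x_k}R=R/(x_{[1,k]})$, whose cokernel is $R/(x_k)=\k[x_1,\dots,x_{k-1}]$ and which therefore receives no nonzero $R$-module maps from any $R/(x_{[1,m]})$, $1\le m\le k-1$; this yields $e_{[1,k]}\BB^{\circ}e'\cong e_{[1,k-1]}\BB^{\circ}e'$, hence the projectivity and all the claims.

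\emph{Expected obstacle.} The only step that is not pure formalism is the ``row isomorphism'' $e_{[2,j]}\BB^{\circ\circ}e\cong e_{[1,j]}\BB^{\circ\circ}e$: one is asserting that $\Hom_R(-,R/(x_{[2,j]}))$ and $\Hom_R(-,R/(x_{[1,j]}))$ agree naturally on the additive hull of $\{R/(x_J):\emptyset\neq J\sub[2,k]\}$, even though $R/(x_{[1,j]})$ does not lie in that hull. The mechanism making this work — and which one must identify correctly — is that the discrepancy between the two modules is exactly the cokernel of multiplication by the ``extra variable'' $x_1$, a polynomial ring with no monomial zero-divisors, hence invisible to $\Hom_R(R/(x_J),-)$ for $J\sub[2,k]$; the analogous role is played by $x_k$ in part (iii). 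Everything else is bookkeeping with idempotents.
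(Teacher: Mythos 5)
Your proof is correct. For part (i) it coincides with the paper's Yoneda/corner computation, so no real difference there. For parts (ii) and (iii), which the paper dismisses as ``straightforward and similar to (i),'' you take a more structured route than the paper presumably intends (namely, defining $i$ by the stated component formula and checking adjunction and exactness directly): starting from the universal corner-algebra left adjoint $\BB^{\circ\circ}e\otimes_{e\BB^{\circ\circ}e}(-)$, you obtain both the component description and the exactness in one stroke from the claim that $\BB^{\circ\circ}e$ is a finitely generated projective right $e\BB^{\circ\circ}e$-module. The only substantive input is the row isomorphism $e_{[2,j]}\BB^{\circ\circ}e\cong e_{[1,j]}\BB^{\circ\circ}e$ (together with $e_{[1]}\BB^{\circ\circ}e=0$), which rests precisely on $R/(x_1)=\k[x_2,\ldots,x_k]$ receiving no nonzero map from any $R/(x_J)$ with $\emptyset\neq J\subset[2,k]$ --- that is, on monomials being non-zero-divisors in a polynomial ring over an arbitrary commutative base $\k$. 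This packaging localizes the whole content of (ii)--(iii) in a single annihilator computation and makes exactness a formal consequence of flatness, whereas the paper's implied route would read it off the formula; your check that the row isomorphisms are right $e\BB^{\circ\circ}e$-linear (postcomposition commutes with the precomposition action) and the separate treatment of the degenerate interval $[1]$ are exactly the points that make this airtight. Both approaches reach the same conclusion; yours supplies the argument the paper omits.
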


\Pf . (i) For any ${\BB^{\circ\circ}}$-module $M$, we have
$$\Hom_{B^{\circ\circ}}(P^{\circ\circ}_{[1,i]},M)\simeq M_{[1,i]}=r^{\BB^{\circ\circ}}_{\BB^{\circ}}(M)_{[1,i]}\simeq
\Hom_{\BB^{\circ}}(P^{\circ}_{[1,i]},r^{\BB^{\circ\circ}}_{\BB^{\circ}}(M)),$$
which gives an isomorphism $i^{\BB^{\circ\circ}}_{\BB^{\circ}}(P^{\circ}_{[1,i]})\simeq P^{\circ\circ}_{[1,i]}$.
The proof of the isomorphism $i^{\BB^{\circ}}_R(R)\simeq P$ is similar.

This implies that our two adjunction maps are isomorphisms on all projective modules, hence on the perfect derived subcategories. It is also easy to see that these maps are isomorphisms on the abelian categories (since the induction
functors on the abelian categories can be explicitly described).

\noindent
(ii),(iii). The proofs are straightforward and similar to (i).
\ed

For a pair of disjoint subintervals $I,J\sub [1,k]$, such that $I\sqcup J$ is again an interval, 
let us denote by 
$$\a\{I,J\}\in\Hom_{\BB^{\circ\circ}}(P_I,P_{I\sqcup J})=\Hom_R(R/x_{I\sqcup J},R/x_{I})$$
the generator corresponding to the natural projection $R/x_{I\sqcup J}\to R/x_{I}$,
and let
$$\b\{I,J\}\in\Hom_{\BB^{\circ\circ}}(P_{I\sqcup J},P_J)=\Hom_R(R/x_J,R/x_{I\sqcup J})$$
be the generator corresponding to the map $R/x_J\rTo{x_{I}} R/x_{I\sqcup J}$.
In the case when this makes sense we use the same notation for the similar morphisms in the category of ${\BB^{\circ}}$-modules.

\begin{prop}\label{B-semiorth-dec-prop}
For every $i=2,\ldots,k$ let us define a ${\BB^{\circ}}$-module $\ov{P}_{i}$ from the exact sequence
$$0\to P_{[1,i-1]}\rTo{\a\{[1,i-1],[i]\}} P_{[1,i]}\to \ov{P}_{i}\to 0,$$
and let $\ov{P}_{1}=P_{[1]}$.
Then
\begin{equation}\label{B-mod-Ext-eq}
\Ext^m_{\BB^{\circ}}(\ov{P}_{i},\ov{P}_{i})\simeq \begin{cases} R/(x_i) & m=0\\ 0 & m\neq 0,\end{cases}
\end{equation}
and one has a semiorthogonal decomposition,
$$\Perf({\BB^{\circ}})=\lan \lan \ov{P}_{k}\ran,\ldots,\lan \ov{P}_{2}\ran, \lan \ov{P}_1\ran\ran,$$
where $\lan \ov{P}_i\ran\simeq \Perf(R/(x_i))$.
\end{prop}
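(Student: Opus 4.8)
The plan is to build the modules $\ov{P}_i$ as explicit complexes, compute their self-Ext groups by resolving everything over $R$, and then verify the semiorthogonality relations and generation by a dimension count. Since $\BB^{\circ}=\End_R(\bigoplus_{m=1}^k R/(x_{[1,m]})\oplus R)$, every $\Hom$ between projectives is computed as a $\Hom$ of $R$-modules via the identification $\Hom(P_I,P_J)\simeq\Hom_R(R/(x_J),R/(x_I))$; so the first step is to observe that $\ov{P}_i$, presented as the two-term complex $[P_{[1,i-1]}\xrightarrow{\a\{[1,i-1],[i]\}} P_{[1,i]}]$, has $\Hom_{\BB^{\circ}}$-complex into any projective that can be identified with the $R$-module mapping cone of $\cdot x_{[i,i]}$-type maps between cyclic $R$-modules. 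Concretely, $r^{\BB^{\circ}}_R$ and the restriction functors $r^{\BB^{\circ}}_{\BB^{\circ}_{[1,k-1]}[x_k]}$ from Lemma \ref{res-ind-lem} let us strip variables one at a time and reduce to small cases.

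First I would establish \eqref{B-mod-Ext-eq}. Apply $\Hom_{\BB^{\circ}}(-,\ov{P}_i)$ (or rather $\Hom_{\BB^{\circ}}(\ov{P}_i,-)$ after choosing projective resolutions of the target) to the defining sequence $0\to P_{[1,i-1]}\to P_{[1,i]}\to\ov{P}_i\to 0$. The key computation is $\Hom_{\BB^{\circ}}(P_{[1,a]},P_{[1,b]})\simeq\Hom_R(R/(x_{[1,b]}),R/(x_{[1,a]}))$, which is $R/(x_{[1,\min(a,b)]})$ if $a\le b$ and $(x_{[a+1,b]})\cdot$ something otherwise — in any case an explicit cyclic module — together with the fact that the connecting map in the long exact sequence is multiplication by $x_i$ or the projection, which is injective in the relevant spots. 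So the total complex computing $\Ext^\bullet_{\BB^{\circ}}(\ov{P}_i,\ov{P}_i)$ is a $2\times 2$ square of cyclic $R$-modules whose only surviving cohomology is $R/(x_i)$ in degree $0$; the vanishing in nonzero degrees comes from exactness of the acyclic pieces, essentially the same mechanism as Lemma \ref{uv-acyclic-lem} on the A-side. This gives $\lan\ov{P}_i\ran\simeq\Perf(R/(x_i))$ once one checks $\ov{P}_i$ is a compact generator of the subcategory it generates, which is automatic since it is perfect and self-Ext is concentrated in degree zero with the right ring.

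Next, the semiorthogonality: I must show $\Hom^\bullet_{\BB^{\circ}}(\ov{P}_i,\ov{P}_j)=0$ for $i>j$, i.e., that the subcategories are listed in the admissible order $\lan\ov{P}_k\ran,\ldots,\lan\ov{P}_1\ran$. Again resolve using the defining sequences and reduce to computing $\Hom_{\BB^{\circ}}$ between the $P_{[1,a]}$; the relevant $\Hom_R(R/(x_{[1,b]}),R/(x_{[1,a]}))$ and the maps induced by $\a\{-,-\}$ assemble into a complex which one shows is acyclic when $i>j$ — the point being that $x_j$ already acts invertibly, in the appropriate sense, on the modules showing up for the smaller interval, so the $x_i$-multiplication cone is exact. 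For generation, i.e. that $\Perf(\BB^{\circ})$ is built from the $\lan\ov{P}_i\ran$, it suffices to show every projective $P_{[1,m]}$ (and $P=P_{[1,k]}$) lies in the triangulated subcategory generated by $\ov{P}_1,\ldots,\ov{P}_k$; but $P_{[1,m]}$ sits in the triangle $P_{[1,m-1]}\to P_{[1,m]}\to\ov{P}_m\to$, so by descending induction on $m$ each $P_{[1,m]}$ is an extension of $\ov{P}_m$ by $P_{[1,m-1]}$, and $P_{[1,1]}=\ov{P}_1$; these projectives generate $\Perf(\BB^{\circ})$, so we are done.

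The main obstacle I expect is the bookkeeping in the self-Ext and cross-Ext computations: one has to be careful about which of the two maps $\a$, $\b$ (projection versus multiplication by $x_I$) appears, and to identify the resulting complexes of cyclic $R$-modules correctly — an off-by-one in the interval indices would flip an exact sequence into a non-exact one. I would handle this by first doing the case $k=2$ completely by hand (where $\BB^{\circ}=\End_R(R/(x_1)\oplus R)$ and $\ov{P}_2=R/(x_1)$ as the quotient), then using the restriction functor $r^{\BB^{\circ}}_{\BB^{\circ}_{[1,k-1]}[x_k]}$ of Lemma \ref{res-ind-lem}(iii), which sends $P_{[1,i]}\mapsto P_{[1,i]}[x_k]$ for $i\le k-1$ and is exact with an exact left adjoint, to run an induction on $k$ that peels off the variable $x_k$; the module $\ov{P}_k$ is then the "new" piece not seen at level $k-1$, and its self-Ext is the isolated computation $\Ext^\bullet_{\BB^{\circ}}(\ov{P}_k,\ov{P}_k)=R/(x_k)$, which is short enough to do directly.
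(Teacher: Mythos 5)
Your proposal is broadly correct and reaches the same conclusions, but at two places the organization differs from the paper's in ways worth noting.

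For the computation of $\Ext^\bullet(\ov{P}_i,\ov{P}_j)$, you envisage a ``$2\times2$ square of cyclic $R$-modules'' and invoke an analogue of Lemma~\ref{uv-acyclic-lem}. This is heavier machinery than needed: since $P_{[1,a]}$ is projective, $\Hom_{\BB^\circ}(P_{[1,a]},M)=M_{[1,a]}$ with no higher Ext, so one only resolves the \emph{first} argument $\ov{P}_i$ by its two-term projective resolution $[P_{[1,i-1]}\to P_{[1,i]}]$, yielding a two-term complex $(\ov{P}_j)_{[1,i]}\to(\ov{P}_j)_{[1,i-1]}$ rather than a square. The paper's key computation is simply that $(\ov{P}_i)_{[1,j]}=R/(x_i)$ for $j\ge i$ and $=0$ for $j<i$, from which both \eqref{B-mod-Ext-eq} and the semiorthogonality $\Ext^\bullet(\ov{P}_i,\ov{P}_j)=0$ for $i>j$ drop out immediately; nothing like Lemma~\ref{uv-acyclic-lem} (an A-side device) is used. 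Your heuristic ``$x_j$ already acts invertibly'' doesn't quite capture what happens: the two relevant components are both $R/(x_j)$ and the induced map is an isomorphism, which is a statement about the $\BB^\circ$-module structure of $\ov{P}_j$, not about invertibility of $x_j$ on some module. Also watch the index typo ($(x_{[a+1,b]})$ should be $(x_{[b+1,a]})$ when $a>b$) and the ``descending induction'' on $m$ should be ascending.

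For the generation step, however, your argument is actually a bit more direct than the paper's. You use ascending induction on $m$ via the triangles $P_{[1,m-1]}\to P_{[1,m]}\to\ov{P}_m$, starting from $P_{[1,1]}=\ov{P}_1$, to show all projectives lie in $\lan\ov{P}_1,\ldots,\ov{P}_k\ran$; combined with semiorthogonality and \cite[Lem.~1.20]{BLL} this gives the SOD. The paper instead runs an induction on $k$ through the restriction/induction functors $r^{\BB^\circ}_{\BB^\circ_{[1,k-1]}[x_k]}$, $i^{\BB^\circ}_{\BB^\circ_{[1,k-1]}[x_k]}$ and the identification $i^{\BB^\circ}_{\BB^\circ_{[1,k-1]}[x_k]}(\ov{P}_i[x_k])\simeq\ov{P}_i$, reducing to the two-piece decomposition \eqref{Bcirc-semiorth-dec-eq}. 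Both work; yours is more elementary, while the paper's fits into the pattern of induction on $k$ used throughout that section (e.g.\ in the proof of Theorem~\ref{B-loc-thm}). In summary: the Ext step in your sketch should be simplified to match the paper's direct component computation, and the rest is a valid and slightly cleaner route to the same result.
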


\Pf . The surjectivity of the map $R/(x_{[1,i]})\to R/(x_{[1,i-1]})$
implies that the map $\a\{[1,i-1],[i]\}$ is an embedding.
Furthermore, it is easy to see that
$$(\ov{P}_{i})_{[1,j]}=\begin{cases} R/(x_i), & j\ge i, \\ 0, & j<i. \end{cases}$$
In particular, $\Ext^*(P_{[1,j]},\ov{P}_{i})=0$ for $j<i$ and $\Ext^*(P_{i},\ov{P}_{i})=R/(x_i)$.
This immediately implies the required semiorthogonalities and the equality \eqref{B-mod-Ext-eq}.

Now let us consider the restriction and induction functors 
$$r^{\BB^{\circ}}_{\BB^{\circ}_{[1,k-1]}[x_k]}:\Perf({\BB^{\circ}})\to D^b(\BB^{\circ}_{[1,k-1]}[x_k]), \ \ i^{\BB^{\circ}}_{\BB^{\circ}_{[1,k-1]}[x_k]}:D^b(\BB^{\circ}_{[1,k-1]}[x_k])\to D^b({\BB^{\circ}}).$$
We have $r^{\BB^{\circ}}_{\BB^{\circ}_{[1,k-1]}[x_k]}\circ  i^{\BB^{\circ}}_{\BB^{\circ}_{[1,k-1]}[x_k]}=\Id$, so the functor
$i^{\BB^{\circ}}_{\BB^{\circ}_{[1,k-1]}[x_k]}$ is fully faithful. Since for $i\le k-1$ one has
\begin{equation}\label{ind-P-1i-eq}
i^{\BB^{\circ}}_{\BB^{\circ}_{[1,k-1]}[x_k]}(P_{[1,i]}[x_k])=P_{[1,i]},
\end{equation}
we deduce that this induction functor sends perfect complexes to perfect complexes and 
$$i^{\BB^{\circ}}_{\BB^{\circ}_{[1,k-1]}[x_k]}(\ov{P}_{i}[x_k])\simeq\ov{P}_{i},$$
for $i\le k-1$. Using the induction on $k$, we see that it remains to prove that there is a semiorthogonal decomposition
\begin{equation}\label{Bcirc-semiorth-dec-eq}
\Perf({\BB^{\circ}})=\lan \lan\ov{P}_{k}\ran, i^{\BB^{\circ}}_{\BB^{\circ}_{[1,k-1]}[x_k]}\Perf(\BB^{\circ}_{[1,k-1]}[x_k])\ran.
\end{equation}
Since we already proved the semiorthogonality, by \cite[Lem.\ 1.20]{BLL}, it is enough to check that the two subcategories on the right generate
$\Perf({\BB^{\circ}})$. In view of \eqref{ind-P-1i-eq}, it remains to see that they generate the module $P_{[1,k]}$. But this follows from the exact
sequence defining $\ov{P}_k$.
\ed

\begin{rmk}\label{ov-P-res-rem} It is easy to see that for every $i$ one has an isomorphism of $B^\circ$-modules
$$\ov{P}_i\simeq r^{B^{\circ\circ}}_{B^\circ}(P^{\circ\circ}_{[i]}).$$
\end{rmk}
                                                                                                                             
Now we introduce a certain family of ${\BB^{\circ\circ}}$-modules, which will play an important role for us.

\begin{lem}\label{M-mod-lem} For a pair of nonempty disjoint subintervals $I, J\sub [1,k]$, such that $I\sqcup J$
is still an interval, the sequence
$$0\to P_I\rTo{\a\{I,J\}} P_{I\sqcup J}\rTo{\b\{I,J\}} P_J$$ 
is exact. We define the module $M\{I,J\}$ as the cokernel of the last arrow, so that we have an exact sequence
$$0\to P_I\to P_{I\sqcup J}\to P_J\to M\{I,J\}\to 0.$$ 
\end{lem}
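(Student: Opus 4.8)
The plan is to verify exactness of the sequence $0\to P_I\rTo{\a\{I,J\}} P_{I\sqcup J}\rTo{\b\{I,J\}} P_J$ by passing to the $R$-module components and reducing everything to an elementary statement about the ring $R = \k[x_1,\dots,x_k]/(x_1\dots x_k)$. First I would write $I = [a,b]$, $J = [b+1,c]$ (or $[c,a-1]$; the two cases are symmetric), so that $I\sqcup J = [a,c]$, and recall from the identification $\Hom_{\BB^{\circ\circ}}(P_I,P_J)\simeq \Hom_R(R/(x_J),R/(x_I))$ that the composite $\b\{I,J\}\circ\a\{I,J\}$ corresponds to the composite $R/(x_J)\rTo{x_I} R/(x_{I\sqcup J})\to R/(x_I)$, which is zero since $x_I\equiv 0$ in $R/(x_I)$; hence we have a complex. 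Injectivity of $\a\{I,J\}$ is already recorded in the proof of Proposition \ref{B-semiorth-dec-prop} (the projection $R/(x_{I\sqcup J})\to R/(x_I)$ is surjective, so the dual map $P_I\to P_{I\sqcup J}$ is an embedding), so the only real content is exactness in the middle.

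For exactness in the middle, I would use the decomposition $M = \bigoplus_{K} M_K$ of any $\BB^{\circ\circ}$-module into its $R$-module components $M_K = \Hom_{\BB^{\circ\circ}}(P_K,M)$, which is exact in $M$; thus it suffices to check, for each nonempty subinterval $K\sub[1,k]$, that
\[
0\to \Hom_R(R/(x_K),R/(x_I))\to \Hom_R(R/(x_K),R/(x_{I\sqcup J}))\to \Hom_R(R/(x_K),R/(x_J))
\]
is exact, where the maps are post-composition with the projection $R/(x_{I\sqcup J})\to R/(x_I)$ and with multiplication by $x_I$. Now $\Hom_R(R/(x_K),R/(x_L))$ is the annihilator of $x_K$ in $R/(x_L)$, i.e. $(x_L : x_K)/(x_L)$; because $R$ is a complete intersection (more concretely, a monomial quotient, where all the relevant ideals $(x_K)$, $(x_L)$ are generated by monomials and products/colons of such ideals are easy to compute), each of these Hom-modules is itself a cyclic $R$-module of the form $(x_{L\setminus K})/(x_L)$ up to the obvious conventions, and the maps in the sequence become multiplication maps between explicit cyclic modules. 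Checking exactness then reduces to a direct monomial bookkeeping: on the component $K$, the middle term is $(x_{(I\sqcup J)\setminus K})\cdot R/(x_{I\sqcup J})$, the left term maps isomorphically onto the submodule killed by the projection, and the quotient maps isomorphically onto the image of multiplication by $x_I$ inside $R/(x_J)$. This is the kind of calculation already carried out implicitly in Proposition \ref{B-semiorth-dec-prop} and in Theorem \ref{A-side-morphisms-general-thm}, so I would present it compactly rather than case-by-case.

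The step I expect to be the main obstacle is the middle-exactness check when $K$ overlaps the boundary between $I$ and $J$ in a subtle way — i.e. keeping careful track of which monomial $x_{L\setminus K}$ represents $\Hom_R(R/(x_K),R/(x_L))$ when $K$ is not contained in, nor disjoint from, $I$ or $J$, and making sure the zero-relation $x_1\dots x_k = 0$ in $R$ does not introduce an unexpected extra element in the kernel. A clean way to organize this, which I would adopt, is to first treat the "generic" case where $I\sqcup J\subsetneq [1,k]$ so that $x_{I\sqcup J}$ is a non-zero-divisor-like monomial on the relevant modules, and then observe that the boundary case $I\sqcup J = [1,k]$ follows by the same computation since there $R/(x_{I\sqcup J}) = R$ and $x_I$ acts with annihilator exactly $(x_J)$ by the defining relation. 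The definition of $M\{I,J\}$ as the cokernel of $\b\{I,J\}$, and the resulting four-term exact sequence $0\to P_I\to P_{I\sqcup J}\to P_J\to M\{I,J\}\to 0$, is then immediate once the three-term exactness is established.
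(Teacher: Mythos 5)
Your overall strategy is the same as the paper's: pass to the $R$-module components and reduce to the short exact sequence of cyclic $R$-modules $0\to R/(x_J)\rTo{x_I}R/(x_{I\sqcup J})\to R/(x_I)\to 0$. Two comments. First, you have the variance backwards: since $M_K=\Hom_{\BB^{\circ\circ}}(P_K,M)$ and $\Hom(P_I,P_J)\simeq\Hom_R(R/(x_J),R/(x_I))$, the $K$-component of $P_I$ is $\Hom_R(R/(x_I),R/(x_K))$, with $I$ in the \emph{source} slot; the induced maps on $K$-components are therefore \emph{pre}-composition with the maps of the short exact sequence, and as written your three-term complex (with $I$ in the target slot and ``post-composition'' maps) is internally inconsistent and in any case is not the componentwise form of $P_I\to P_{I\sqcup J}\to P_J$. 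Second, once the variance is corrected there is no need for any monomial bookkeeping: the sequence of $K$-components is precisely $\Hom_R(-,R/(x_K))$ applied to $0\to R/(x_J)\rTo{x_I} R/(x_{I\sqcup J})\to R/(x_I)\to 0$, so its exactness is immediate from left-exactness of the contravariant $\Hom$. That is exactly what the paper's one-line proof records; the only ring-theoretic input is $(x_{I\sqcup J}):x_I=(x_J)$ in $R$, and your remark about the boundary case $I\sqcup J=[1,k]$ (where $R/(x_{I\sqcup J})=R$ and the relation $x_{[1,k]}=0$ supplies the annihilator $(x_J)$) is the right place to check that this colon-ideal computation still holds.
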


\Pf . We just need to show that the kernel of the morphism
$P_{I\sqcup J}\to P_{J}$ is isomorphic to $P_{I}$. But this immediately follows from
the exact sequence
$$0\to R/(x_{J})\rTo{x_{I}} R/(x_{I\sqcup J})\to R/(x_{I})\to 0.$$
\ed

For every $j\in [2,k]$, we have a natural exact functor of ``extending by zero",
$$i_!\{j\}:\BB^{\circ\circ}_{[j,k]}[x_1,\ldots,x_{j-2}]-\mod\to {\BB^{\circ\circ}}-\mod.$$
Namely, it is given by
$$(i_!\{j\}M)_I=\begin{cases} M_I & I\sub [j,k], \\ 0 & \text{otherwise}.\end{cases}$$
It is easy to see that there is a natural structure of a ${\BB^{\circ\circ}}$-module on $i_!\{j\}M$.

We will also use the notation $i_!\{1\}=\Id:{\BB^{\circ\circ}}-\mod\to{\BB^{\circ\circ}}-\mod$.

\begin{lem}\label{i!-res-lem} 
For every $m\ge j\ge 2$ one has an isomorphism
of ${\BB^{\circ\circ}}$-modules,
$$M\{[j-1],[j,m]\}\simeq i_!\{j\}(P_{[j,m]}[x_1,\ldots,x_{j-2}]).$$
\end{lem}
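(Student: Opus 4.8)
The plan is to compare the two $\BB^{\circ\circ}$-modules slot by slot, using the decomposition $M=\bigoplus_I M_I$ with $M_I=\Hom_{\BB^{\circ\circ}}(P_I,M)$ for a $\BB^{\circ\circ}$-module $M$. Write $I_0=[j-1]$, $J_0=[j,m]$ and $K_0=I_0\sqcup J_0=[j-1,m]$. Applying the exact functor $M\mapsto M_I$ to the four-term exact sequence $0\to P_{I_0}\to P_{K_0}\to P_{J_0}\to M\{I_0,J_0\}\to 0$ of Lemma \ref{M-mod-lem}, and using $\Hom_{\BB^{\circ\circ}}(P_I,P_S)=\Hom_R(R/(x_S),R/(x_I))$ together with the identification $\Hom_R(R/(x_S),R/(x_T))=(x_{T\setminus S})/(x_T)$ inside $\k[x_1,\ldots,x_k]/(x_T)$ (valid since $x_1\cdots x_k\in(x_T)$ for every nonempty subinterval $T$), one checks that the map induced by $\b\{I_0,J_0\}$ is multiplication by $x_{I_0}=x_{j-1}$, and therefore
$$(M\{I_0,J_0\})_I\ \simeq\ (x_{I\setminus J_0})\big/\bigl((x_I)+x_{j-1}\,(x_{I\setminus K_0})\bigr)$$
as a subquotient of $\k[x_1,\ldots,x_k]$.

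Next I would run through the subintervals $I\sub[1,k]$. If $\min I\ge j$, then $j-1\notin I$ and $I\setminus J_0=I\setminus K_0=I\cap[m+1,k]$ is a monomial which is a non-zero-divisor in $\k[x_1,\ldots,x_k]$; dividing by it identifies this slot with $\k[x_1,\ldots,x_{j-2},x_j,\ldots,x_k]/(x_{I\cap[j,m]})$, and the same manipulation identifies the $I$-slot of the right-hand side, namely $\Hom_{\BB^{\circ\circ}_{[j,k]}[x_1,\ldots,x_{j-2}]}(P_I,P_{[j,m]})$, with the same ring. If instead $\min I\le j-1$, then $I$ being an interval forces either $I\sub[1,j-2]$, in which case the numerator $(x_I)$ already lies in the denominator, or $j-1\in I$, in which case $x_{I\setminus J_0}=x_{j-1}\,x_{I\setminus K_0}$ also lies in the denominator; either way the slot vanishes, matching the fact that $i_!\{j\}$ kills every slot indexed by an interval not contained in $[j,k]$. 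So the two modules have canonically matching slots, and both are supported on the subintervals of $[j,k]$.

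Finally I would assemble the slot isomorphisms into an isomorphism of $\BB^{\circ\circ}$-modules. Since both modules are supported on subintervals of $[j,k]$, any generating morphism $\a\{I',J'\}$ or $\b\{I',J'\}$ with $I'\sqcup J'\not\sub[j,k]$ acts by zero on both sides; among the remaining slots the transition maps realize the $\BB^{\circ\circ}_{[j,k]}$-module structure, and pushing them through the identifications of the previous step one sees they agree with those of $P_{[j,m]}[x_1,\ldots,x_{j-2}]$ (equivalently, the generator $\overline 1$ of the cyclic module $(M\{I_0,J_0\})_{[j,m]}$ determines the comparison morphism $i_!\{j\}(P_{[j,m]}[x_1,\ldots,x_{j-2}])\to M\{I_0,J_0\}$, which the slot computations show is an isomorphism). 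The one point needing care is the action of $x_{j-1}\in R$: on $i_!\{j\}(-)$ it acts by zero by construction, and on $M\{I_0,J_0\}$ it acts by zero on each slot $I\sub[j,k]$ because there $x_{I\setminus J_0}=x_{I\setminus K_0}$, so $x_{j-1}\cdot(x_{I\setminus J_0})\sub x_{j-1}(x_{I\setminus K_0})$ lies in the denominator. I expect this last verification — tracking the $\BB^{\circ\circ}$-module structure across the cut between $j-1$ and $j$, and in particular the vanishing of the $x_{j-1}$-action — to be the only genuinely delicate part, the monomial-ideal bookkeeping in the first two steps being routine.
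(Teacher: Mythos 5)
Your proof is correct and follows the same slot-by-slot computation of the cokernel of $\b\{[j-1],[j,m]\}\colon P_{[j-1,m]}\to P_{[j,m]}$ as the paper, with only a cosmetic difference in how the cases are organized (by $\min I$ rather than by whether $j-1\in I$). You additionally spell out the assembly of the slot isomorphisms into a $\BB^{\circ\circ}$-module isomorphism and the vanishing of the $x_{j-1}$-action, which the paper compresses into the phrase ``this easily leads to the identification.''
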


\Pf . We need to compute the cokernel $C$ of the map $\b\{[j-1],[j,m]\}:P_{[j-1,m]}\to P_{[j,m]}$.
We have
$$(P_{[j,m]})_I=\Hom(R/(x_{[j,m]}),R/(x_I))=(x_{I\setminus [j,m]})/(x_I), \ \ (P_{[j-1,m]})_I=(x_{I\setminus [j-1,m]})/(x_I),$$
and our map is given by the multiplication by $x_{j-1}$. 

Assume first that $j-1\not\in I$. Then $I\setminus [j,m]=I\setminus [j-1,m]$, so we get
$$C_I=(P_{[j,m]})_I\ot R/(x_i).$$
Note that in this case we have either $I\sub [1,j-1]$ or $I\sub [j,k]$, and in the former case, $(P_{[j,m]})_I=0$, so $C_I=0$.

On the other hand, if $j-1\in I$ then setting $I'=I\setminus [j-1,m]$ we have
$$(P_{[j,m]})_I=(x_{j-1}x_{I'})/(x_I), \ \ (P_{[j-1,m]})_I=x_{I'}/(x_I),$$
so in this case the map $(P_{[j-1,m]})_I\rTo{x_{j-1}} (P_{[j,m]})_I$ is surjective, so $C_I=0$.

This easily leads to the identification $C\simeq i_!\{j\}(P_{[j,m]}[x_1,\ldots,x_{j-2}])$.
\ed

\begin{thm}\label{A-semiorth-dec-thm}
For every $j=1,\ldots,k$, the composed functor
\begin{equation}\label{B-j-A-emb}
\Phi_j=i_!\{j\}i_{\BB^{\circ}_{[j,k]}}^{\BB^{\circ\circ}_{[j,k]}}:\Perf(\BB^{\circ}_{[j,k]}[x_1,\ldots,x_{j-2}]))\to D^b({\BB^{\circ\circ}})
\end{equation}
is fully faithful, takes values in $\Perf({\BB^{\circ\circ}})$, and we have a semiorthogonal decomposition
$$\Perf({\BB^{\circ\circ}})=
\lan \Phi_k \Perf(\BB^{\circ}_{[k]}[x_1,\ldots,x_{k-2}]), \ldots, \Phi_3 \Perf(\BB^{\circ}_{[3,k]}[x_1]), \Phi_2 \Perf(\BB^{\circ}_{[2,k]}), \Phi_1 \Perf(\BB^{\circ}_{[1,k]})\ran.$$
\end{thm}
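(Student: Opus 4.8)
The plan is to prove Theorem \ref{A-semiorth-dec-thm} by induction on $k$, mirroring the structure of the proof of Proposition \ref{B-semiorth-dec-prop}, but now one must split off a whole copy of $\Perf(\BB^{\circ}_{[1,k]})$ rather than a single block $\lan\ov{P}_k\ran$. The base case $k=1$ is immediate since $\BB^{\circ\circ}_{[1]}=R_{[1]}=\k[x_1]$ and all the $\Phi_j$ reduce to the identity on $\Perf(\k[x_1])$. For the inductive step, I would first establish the elementary properties of each $\Phi_j$ and then the semiorthogonality, and finally the generation statement.

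\emph{Step 1: Properties of $\Phi_j$.} For $j=1$, $\Phi_1 = i^{\BB^{\circ\circ}}_{\BB^{\circ}}$ is fully faithful and takes values in $\Perf(\BB^{\circ\circ})$ by Lemma \ref{res-ind-lem}(i) and the definition of $i^{\BB^{\circ\circ}}_{\BB^{\circ}}$ (it sends the generators $P^\circ_{[1,m]}$ to $P^{\circ\circ}_{[1,m]}$). For $j\ge 2$, the functor $i^{\BB^{\circ\circ}}_{\BB^{\circ\circ}_{[2,k]}[x_1]}$ of Lemma \ref{res-ind-lem}(ii) is exact and admits the exact right-inverse restriction, so it is fully faithful on module categories and hence on $\Perf$; iterating gives a fully faithful exact functor $\BB^{\circ\circ}_{[j,k]}[x_1,\ldots,x_{j-2}]-\mod\to\BB^{\circ\circ}-\mod$, which on the level of $I$-components is exactly the extension-by-zero $i_!\{j\}$ (compare the explicit component formulas). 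Composing with $i^{\BB^{\circ\circ}_{[j,k]}}_{\BB^{\circ}_{[j,k]}}$, which is fully faithful by Lemma \ref{res-ind-lem}(i) applied to the interval $[j,k]$, yields that $\Phi_j$ is fully faithful. To see $\Phi_j$ lands in $\Perf(\BB^{\circ\circ})$: the generators $P_{[j,m]}[x_1,\ldots,x_{j-2}]$ of the source are sent, by Lemmas \ref{res-ind-lem}(ii), \ref{M-mod-lem} and \ref{i!-res-lem}, to the modules $M\{[j-1],[j,m]\}$ (for $j\ge 2$), which admit the finite projective resolution $0\to P_{[j-1]}\to P_{[j-1,m]}\to P_{[j,m]}\to M\{[j-1],[j,m]\}\to 0$ over $\BB^{\circ\circ}$; this is the crucial input. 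For $j=1$ the generators go to $P^{\circ\circ}_{[1,m]}$, which are projective.

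\emph{Step 2: Semiorthogonality.} I must check $\Hom_{D^b(\BB^{\circ\circ})}(\Phi_i(-),\Phi_j(-)[\bullet])=0$ for $i>j$. Since the $\Phi_j\Perf$ are generated by the images of the projective generators listed above, it suffices to compute $\Ext^*_{\BB^{\circ\circ}}$ between the modules $M\{[i-1],[i,m]\}$ and $M\{[j-1],[j,m']\}$ (with the convention $M\{[0],[1,m]\}=P^{\circ\circ}_{[1,m]}$). Using the explicit $4$-term projective resolution of the first argument, this reduces to a computation of $\Hom(P_{[i-1]},-)$, $\Hom(P_{[i-1,m]},-)$, $\Hom(P_{[i,m]},-)$ evaluated on $M\{[j-1],[j,m']\}$, i.e.\ to reading off the components $M\{[j-1],[j,m']\}_I$ for $I=[i-1],[i-1,m],[i,m]$. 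Since $i>j$ forces these subintervals to meet $[j-1]$ or be disjoint from $[j,k]$ in a controlled way, and since by Lemma \ref{i!-res-lem} the module $M\{[j-1],[j,m']\}$ is supported on subintervals of $[j,k]$, the relevant components and connecting maps either vanish or the resolution differentials become isomorphisms, killing all $\Ext$'s. This is a finite but somewhat delicate bookkeeping argument; I expect this to be the main obstacle, because one has to track signs/directions of the maps $\a\{-,-\}$, $\b\{-,-\}$ carefully and handle the boundary case $j=1$ (where $M\{[0],[1,m]\}=P^{\circ\circ}_{[1,m]}$ is projective, simplifying matters) separately.

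\emph{Step 3: Generation.} By \cite[Lem.\ 1.20]{BLL}, given the semiorthogonality it remains to show that the subcategories $\Phi_k\Perf(\cdots),\ldots,\Phi_1\Perf(\cdots)$ together generate $\Perf(\BB^{\circ\circ})$, for which it suffices to generate each projective $P^{\circ\circ}_I$, $\emptyset\neq I\sub[1,k]$. Write $I=[a,b]$. If $a=1$ then $P^{\circ\circ}_{[1,b]}=\Phi_1(P^\circ_{[1,b]})$ is in the last block. If $a\ge 2$, use the exact sequence $0\to P_{[a-1]}\to P_{[a-1,b]}\to P_{[a,b]}\to M\{[a-1],[a,b]\}\to 0$ from Lemma \ref{M-mod-lem}: the module $M\{[a-1],[a,b]\}=\Phi_a(P_{[a,b]}[x_1,\ldots,x_{a-2}])$ lies in the block indexed by $a$, and $P_{[a-1]}=M\{[a-2],[a-1]\}$ (or $=\Phi_1$-image when $a-1=1$) lies in a later block by the same token, so by descending induction on $a$ (the outer induction being on $k$, with the inductive hypothesis applied to $\Perf(\BB^{\circ}_{[j,k]}[\ldots])$ via its own semiorthogonal decomposition transported by $\Phi_j$) one concludes $P^{\circ\circ}_{[a,b]}$ is generated. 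Cleanly, one can also run the argument exactly as in Proposition \ref{B-semiorth-dec-prop}: restrict along $r^{\BB^{\circ\circ}}_{\BB^{\circ\circ}_{[2,k]}[x_1]}$, apply the inductive hypothesis for the interval $[2,k]$ to split off $\Phi_k,\ldots,\Phi_2$, and observe that what remains is the essential image of $\Phi_1=i^{\BB^{\circ\circ}}_{\BB^{\circ}}$, using that the $4$-term resolutions express the difference. Assembling Steps 1--3 gives the claimed semiorthogonal decomposition.
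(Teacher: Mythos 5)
Your Step 2 has the semiorthogonality pointing the wrong way. In the paper's convention (the one used in Proposition \ref{B-semiorth-dec-prop}), the decomposition $\lan\Phi_k\Perf(\cdots),\ldots,\Phi_1\Perf(\cdots)\ran$ requires $\Hom(\Phi_i(-),\Phi_j(-)[\bullet])=0$ for $i<j$, i.e.\ no morphisms from the later (lower-index) blocks to the earlier (higher-index) ones. In that direction there is no delicate bookkeeping at all: $\Phi_j$ factors through $i_!\{j\}$, whose image consists of modules $N$ with $N_I=0$ whenever $I\not\sub[j,k]$, while $\Phi_i\Perf(\cdots)$ lies in the subcategory $\PP_i$ generated by the projectives $P_{[j',m]}$ with $j'\le i<j$; since $[j',m]\not\sub[j,k]$ one gets $\Hom(P_{[j',m]},i_!\{j\}N)=0$, and the $P$'s being projective, all higher $\Ext$'s vanish too. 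The direction you wrote ($i>j$) is false: already for $k=2$, applying $\Hom(-,P_{[1]})$ to the resolution $0\to P_{[1]}\to P_{[1,2]}\to P_{[2]}\to M\{[1],[2]\}\to 0$ gives the complex $0\to (x_2)/(x_1x_2)\to R/(x_1)$ whose nonzero map is multiplication by $x_2$ on $\k[x_2]$, so $\Ext^2(M\{[1],[2]\},P_{[1]})\simeq\k\ne 0$, even though $M\{[1],[2]\}=\Phi_2(P_{[2]})$ and $P_{[1]}\in\Phi_1\Perf(\BB^{\circ})$. Also, your claim that for $i>j$ the intervals $[i-1],[i-1,m],[i,m]$ ``meet $[j-1]$ or are disjoint from $[j,k]$'' is incorrect: they are all contained in $[j,k]$ and none contains $j-1$.

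There are further gaps in Step 1. The inference ``fully faithful on module categories and hence on $\Perf$'' does not hold: an exact, fully faithful functor between abelian categories need not preserve higher $\Ext$'s, and indeed $i_!\{j\}$ sends the projective $P_{[j,m]}[x_1,\ldots,x_{j-2}]$ to $M\{[j-1],[j,m]\}$, which has projective dimension $2$ over $\BB^{\circ\circ}$. The actual argument combines the four-term resolution of $M\{[j-1],[j,m]\}$ with the support vanishing $\Hom(P_{[j-1,m']},i_!\{j\}N)=0$ for all $m'\ge j-1$ to collapse $\Ext^*(M\{[j-1],[j,m]\},i_!\{j\}N)$ to $N_{[j,m]}$ concentrated in degree $0$. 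Also, the iterated induction $i^{\BB^{\circ\circ}}_{\BB^{\circ\circ}_{[2,k]}[x_1]}$ is \emph{not} the extension-by-zero $i_!\{j\}$: by Lemma \ref{res-ind-lem}(ii), $(i^{\BB^{\circ\circ}}_{\BB^{\circ\circ}_{[2,k]}[x_1]}M)_{[1,m]}=M_{[2,m]}\neq 0$ in general, whereas $(i_!\{2\}M)_{[1,m]}=0$; these are genuinely different functors, and only $i_!\{j\}$ enters the statement of $\Phi_j$. Your Step 3 is essentially the paper's generation argument via \cite[Lem.\ 1.20]{BLL}, but note that $P_{[a-1]}$ is not equal to $M\{[a-2],[a-1]\}$ --- the latter is a proper quotient of the former.
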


\Pf . Recall that by Lemma \ref{res-ind-lem}(i),
the functor $i^{\BB^{\circ\circ}}_{\BB^{\circ}}:\Perf({\BB^{\circ}})\to \Perf({\BB^{\circ\circ}})$ is fully faithful. Thus, the functor $\Phi_1$ is fully faithful.

To check that the functor $\Phi_j$, for $j\ge 2$, is fully faithful and takes values in $\Perf({\BB^{\circ\circ}})$, we observe that by Lemma \ref{i!-res-lem}
together with Lemma \ref{res-ind-lem}(i), we have
\begin{equation}\label{Phi-j-main-formula}
\Phi_j(P^{\circ}_{[j,m]}[x_1,\ldots,x_{j-2}])=M\{[j-1],[j,m]\}.
\end{equation}
By definition, the latter module is perfect.
Also, for any $\BB^{\circ\circ}_{[j,k]}[x_1,\ldots,x_{j-2}]$-module $M$ one has
$\Hom(P_{[j-1,m]},i_!\{j\}M)=0$ for any $m\ge j-1$. Hence, using \eqref{Phi-j-main-formula} and the defining exact sequence
for $M\{[j-1],[j,m]\}$, we get an
identification
\begin{align*}
&\Hom(\Phi_j(P^{\circ}_{[j,m]}[x_1,\ldots,x_{j-2}]),i_!\{j\}M)\simeq \Hom(P_{[j,m]},i_!\{j\}M)\simeq
(i_!\{j\}M)_{[j,m]} \\
&=M_{[j,m]}\simeq\Hom(P_{[j,m]}[x_1,\ldots,x_{j-2}],M).
\end{align*}
Applying this to $M=i_{\BB^{\circ}_{[j,k]}}^{\BB^{\circ\circ}_{[j,k]}}(P^{\circ}_{[j,m']}[x_1,\ldots,x_{j-2}])$,
we derive that $\Phi_j$ is fully faithful.

For $j\le k$, let us denote by $\PP_j\sub \Perf({\BB^{\circ\circ}})$ the subcategory generated by 
the projective modules $P_{[j',m]}$ with $j'\le j$, so that $\PP_1=\Phi_1 \Perf(\BB^{\circ}_{[1,k]})$.
Let us prove by induction on $j$ that we have a semiorthogonal decomposition
\begin{equation}\label{Pj-semiorth-dec-eq}
\PP_j=\lan \im\Phi_j, \PP_{j-1}\ran
\end{equation}
(for $j=k$ this will give the theorem).
Note that \eqref{Phi-j-main-formula} together with the defining exact sequence
for $M\{[j-1],[j,m]\}$ imply that $\im\Phi_j\sub \PP_j$.
The semiorthogonality immediately follows from the fact
that the image of $i_!\{j\}$ (and hence of $\Phi_j$) is right orthogonal to $\PP_{j-1}$.
Finally, the defining exact sequence for $M\{[j-1],[j,m]\}$ shows that $P_{[j,m]}$ belongs to the subcategory generated by
$\im\Phi_j$ and $\PP_{j-1}$. Hence, our assertion follows from \cite[Lem.\ 1.20]{BLL}.
\ed


\begin{rmk} Noncommutative resolutions similar to $\BB^\circ$ and $\BB^{\circ\circ}$ were considered in
\cite{DFI} and in \cite{Kuz-Lunts}.
\end{rmk}

\subsection{Homological smoothness}

Recall that a $\k$-algebra $B$, flat over $\k$, is called {\it homologically smooth} if the diagonal bimodule $B$ is perfect as an object of $D(B\ot_{\k} B^{op})$.
In the case when $\k$ is a field, this implies that $B$ has finite homological dimension (\cite[Lem.\ 3.6(b)]{Lunts}).
A similar argument proves the following result.

\begin{lem}\label{fin-proj-dim-perfect-lem}
Let $B$ be a homologically smooth $\k$-algebra, flat over $\k$. Assume also that $B$ is left Noetherian. Then every finitely generated $B$-module $M$,
that has finite projective dimension as a $\k$-module, is perfect as a $B$-module.
\end{lem}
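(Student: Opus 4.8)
The plan is to establish finite projective dimension of $M$ over $B$ first, and then to upgrade this to perfectness using that $B$ is left Noetherian. Write $B^e=B\ot_\k B^{op}$, so that $B$-bimodules are left $B^e$-modules. Since $B$ is homologically smooth, the diagonal bimodule $B$ is a direct summand in $D(B^e)$ of a bounded complex $Q_\bullet$ of finitely generated projective $B^e$-modules; say $Q_\bullet$ is concentrated in homological degrees $[0,a]$, so each $Q_i$ is a summand of a finite direct sum of copies of $B^e$. The first routine point I would record is that, because $B$ is flat over $\k$, there is a natural isomorphism $(B\ot_\k B)\ot_B M\cong B\ot_\k M$ for every left $B$-module $M$ (tensoring over $B$ via the right factor of $B^e$), and moreover $\Tor^B_{>0}(B\ot_\k B,M)=0$; hence every $Q_i$ is flat as a right $B$-module.

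Next I would use that the diagonal $B$ is free of rank one as a right $B$-module, so that $M\simeq B\ot^{\mathbb{L}}_B M$ in $D(B)$. Replacing $B$ by $Q_\bullet$ and invoking the right-flatness just noted (so derived and underived tensor agree), $M$ is a direct summand in $D(B)$ of the honest complex $C_\bullet:=Q_\bullet\ot_B M$, which is a bounded complex, concentrated in homological degrees $[0,a]$, whose terms are direct summands of finite direct sums of $B\ot_\k M$.

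Now let $d=\operatorname{pd}_\k M<\infty$. Tensoring a length-$d$ resolution of $M$ by projective $\k$-modules with $B$ over $\k$ (exact, since $B$ is $\k$-flat, and carrying projective $\k$-modules to projective $B$-modules) shows $\operatorname{pd}_B(B\ot_\k M)\le d$; hence every term $C_i$ of $C_\bullet$ has $\operatorname{pd}_B C_i\le d$. Then $\operatorname{RHom}_B(C_\bullet,N)$ is built, by finitely many cones and shifts, from the complexes $\operatorname{RHom}_B(C_i,N)$ with $i\in[0,a]$, each concentrated in cohomological degrees $[0,d]$; therefore $\Ext^j_B(C_\bullet,N)=0$ for $j>a+d$ and every $B$-module $N$. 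As $M$ is a retract of $C_\bullet$, the same vanishing holds for $M$, i.e.\ $\operatorname{pd}_B M\le a+d<\infty$.

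Finally, since $B$ is left Noetherian and $M$ is finitely generated, $M$ admits a resolution by finitely generated free $B$-modules; truncating it at stage $a+d$ yields a finitely generated syzygy $\Omega$ with $\Ext^1_B(\Omega,-)\cong\Ext^{a+d+1}_B(M,-)=0$, so $\Omega$ is projective, and $M$ acquires a finite resolution by finitely generated projective $B$-modules — that is, $M$ is perfect. The only genuinely delicate part is the first step: keeping track of the two $B$-module structures on $B^e$ and verifying the flatness statements that make $Q_\bullet\ot_B M$ both compute $M$ and be built out of copies of $B\ot_\k M$; everything afterward is standard homological algebra, and indeed this is precisely the variant of \cite[Lem.\ 3.6(b)]{Lunts} alluded to above.
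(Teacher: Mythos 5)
Your proof is correct and follows essentially the same strategy as the paper: realize $M$ as a retract of $Q_\bullet\otimes_B M$ where $Q_\bullet$ is a bounded complex of finitely generated projective $B^e$-modules quasi-isomorphic to the diagonal, observe that each term has finite projective dimension over $B$, and then use Noetherianness to upgrade finite projective dimension to perfectness. You have merely filled in the details (the flatness of $B^e$ as a right $B$-module, the bound $\operatorname{pd}_B(B\otimes_\k M)\le \operatorname{pd}_\k M$, and the syzygy argument) that the paper leaves implicit.
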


\Pf . We use the fact that the diagonal bimodule is perfect, so it is a homotopy direct summand of a bounded complex of free $B\ot_{\k} B^{op}$-modules of finite rank.
Applying the corresponding functors $D(B)\to D(B)$ to $M$, we get that $M$ is a homotopy direct summand in a bounded complex of $B$-modules
whose terms are finite direct sums of
$B\ot_{\k} M$. By assumption, these terms have finite projective dimension over $B$, so $M$ also has finite projective dimension over $B$. Since $M$ is finitely generated,
this implies that it is perfect.
\ed

\begin{prop}\label{hom-smooth-prop} Assume that $\k$ is Noetherian.
Then the algebras ${\BB^\circ}$ and ${\BB^{\circ\circ}}$ are homologically smooth over $\k$.
\end{prop}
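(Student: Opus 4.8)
\noindent\emph{Proof strategy.} The plan is to deduce homological smoothness of $\BB^\circ$ and $\BB^{\circ\circ}$ from the two semiorthogonal decompositions established above (Proposition \ref{B-semiorth-dec-prop} and Theorem \ref{A-semiorth-dec-thm}), together with the fact that homological smoothness is preserved under semiorthogonal gluing. First, all the algebras $R_I$, $\BB^\circ_I$, $\BB^{\circ\circ}_I$ are free as $\k$-modules: each carries a monomial $\k$-basis (for $\BB^{\circ\circ}$ this is the basis exhibited in Theorem \ref{A-side-morphisms-general-thm} under the identification $\cA^{\circ\circ}\simeq\BB^{\circ\circ}$, and in general each $\Hom_R(R/(x_I),R/(x_J))=((x_J):x_I)/(x_J)$ is a quotient of monomial ideals). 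In particular they are flat over $\k$, and since $\k$ is Noetherian they are Noetherian; hence for $B$ any one of these algebras or a polynomial extension thereof, $B^e:=B\ot_\k B^{op}$ computes the derived tensor product and $\Perf(B)$ carries its standard dg-enhancement. For such $B$, homological smoothness over $\k$ is equivalent to $\Perf(B)$ being a smooth $\k$-linear dg-category (the diagonal $B$-bimodule corresponds under derived Morita theory to the diagonal bimodule of $\Perf(B)$), and if a pretriangulated dg-category admits a semiorthogonal decomposition $\langle\CC_1,\dots,\CC_m\rangle$ whose components are admissible and each equivalent, via an exact $\k$-linear dg-functor, to $\Perf$ of a smooth $\k$-algebra, then it is itself smooth (see \cite{Kuz-Lunts}, \cite{Lunts}).

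The elementary building blocks are polynomial algebras. For any subinterval $I\sub[1,k]$ and any $i\in I$, one has $R_I/(x_i)=\k[x_j:j\in I,\,j\neq i]$, a polynomial ring over $\k$; its diagonal bimodule has the finite Koszul resolution over $(R_I/(x_i))^e$, so $R_I/(x_i)$ is homologically smooth over $\k$. We also use the standard observation that if $B$ is homologically smooth and flat over $\k$, then so is any polynomial extension $B[t_1,\dots,t_r]=B\ot_\k\k[t_1,\dots,t_r]$: tensoring a finite $B^e$-projective resolution of the diagonal $B$ with a finite $\k[t_1,\dots,t_r]^e$-projective resolution of the diagonal $\k[t_1,\dots,t_r]$ produces a finite projective resolution of the diagonal $B[t_1,\dots,t_r]$ over $B[t_1,\dots,t_r]^e=B^e\ot_\k\k[t_1,\dots,t_r]^e$.

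Now the argument closes in two steps. By Proposition \ref{B-semiorth-dec-prop} applied with $[1,k]$ replaced by the subinterval $[j,k]$, the category $\Perf(\BB^\circ_{[j,k]})$ admits a semiorthogonal decomposition into admissible components, each equivalent (via the dg-functor $M\mapsto\ov P_i\ot^L_{R_{[j,k]}/(x_i)}M$) to $\Perf(R_{[j,k]}/(x_i))$ for $i\in[j,k]$; since each $R_{[j,k]}/(x_i)$ is homologically smooth, the gluing principle shows that $\BB^\circ_{[j,k]}$ is homologically smooth over $\k$, and then so is $\BB^\circ_{[j,k]}[x_1,\dots,x_{j-2}]$ by the polynomial-extension remark. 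Taking $j=1$ gives homological smoothness of $\BB^\circ=\BB^\circ_{[1,k]}$. Next, Theorem \ref{A-semiorth-dec-thm} exhibits $\Perf(\BB^{\circ\circ})$ as the semiorthogonal gluing of the admissible subcategories $\Phi_j\Perf(\BB^\circ_{[j,k]}[x_1,\dots,x_{j-2}])$, $j=1,\dots,k$ --- each equivalent, via the exact dg-functor $\Phi_j$, to $\Perf$ of an algebra just shown to be smooth. Iterating the gluing principle along this semiorthogonal filtration, we conclude that $\Perf(\BB^{\circ\circ})$ is smooth, i.e.\ $\BB^{\circ\circ}$ is homologically smooth over $\k$.

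The one point that needs genuine care --- and the main obstacle --- is the passage between the object-level triangulated statements proved above and the dg-categorical input: one must make sure that the semiorthogonal components in Proposition \ref{B-semiorth-dec-prop} and Theorem \ref{A-semiorth-dec-thm} are indeed admissible, and that they are cut out as $\Perf$ of a $\k$-algebra by an \emph{exact} $\k$-linear dg-functor (not merely a triangulated equivalence), so that the ``gluing preserves smoothness'' lemma applies literally. In our situation this is routine: all the functors involved ($i^{\BB^{\circ\circ}}_{\BB^\circ}$, the extension-by-zero functors $i_!\{j\}$, and $M\mapsto\ov P_i\ot^L M$) are given by tensoring with explicit bimodules over Noetherian $\k$-algebras flat over $\k$, and the components are generated by perfect modules; once this bookkeeping is recorded the proof is purely formal.
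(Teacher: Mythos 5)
Your overall strategy — deduce smoothness of $\BB^\circ$ and $\BB^{\circ\circ}$ from the semiorthogonal decompositions of Proposition \ref{B-semiorth-dec-prop} and Theorem \ref{A-semiorth-dec-thm} via the gluing principle — is exactly the paper's strategy. However, there is a genuine gap in the middle step, and it is in fact the only part of the paper's proof that requires real work.

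You assert that if $\Perf(\DD)$ has a semiorthogonal decomposition $\langle\CC_1,\dots,\CC_m\rangle$ with admissible components each equivalent to $\Perf$ of a smooth $\k$-algebra, then $\DD$ is smooth, citing \cite{Kuz-Lunts}, \cite{Lunts}. That is not what those references prove. The correct statement (\cite[Prop.\ 4.9]{Kuz-Lunts}, reproduced as the paper's Lemma \ref{Kuz-Lunts-lem}) is that gluing two smooth dg-categories $\DD_1$, $\DD_2$ along a \emph{perfect} bimodule $\varphi\in\Perf(\DD_2^{op}\ot_\k\DD_1)$ yields a smooth dg-category. Admissibility of the components does not by itself force the gluing bimodule to be perfect in the non-proper setting, and one cannot appeal to smoothness of the ambient category to get perfectness of $\varphi$ for free, since that smoothness is precisely what one is trying to prove. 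Your paragraph flagging ``the one point that needs genuine care'' misidentifies the obstacle: the issue is not upgrading triangulated equivalences to dg ones, it is verifying that $\hom(M_1,M_2)$ is a perfect bimodule.

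The paper handles this by proving Lemma \ref{fin-proj-dim-perfect-lem} (over a smooth Noetherian flat $\k$-algebra $B$, any finitely generated $B$-module of finite $\k$-projective dimension is perfect over $B$), packaging the gluing criterion as Lemma \ref{smooth-semiorth-lem} with an explicit hypothesis on $\hom(M_1,M_2)$, and then verifying that hypothesis by computing, from the defining short exact sequences, the concrete complexes that compute $\Ext^*_{\BB^\circ}(\ov{P}_k,P_{[1,i]})$ and $\Ext^*_{\BB^{\circ\circ}}(M\{[j-1],[j,m]\},P_I)$, checking that the cohomology is finitely generated and free over $\k$. To complete your proof you would need to supply this step: identify the gluing bimodule at each stage of the decomposition, and then, by a computation of the type just described (or some other argument), show that it is perfect. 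As written, the claim that ``once this bookkeeping is recorded the proof is purely formal'' skips exactly the nontrivial part.
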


Note that wrapped Fukaya categories are always homologically smooth. This can be seen by combining \cite[Thm 1.2]{GPS1} and \cite[Thm 1.2]{ganatra}. 
Thus, Proposition \ref{hom-smooth-prop} will follow from Theorem \ref{main-thm} below. However, it is also not hard to give a direct proof.
The idea of proof is to use semiorthogonal decompositions \eqref{Bcirc-semiorth-dec-eq} and \eqref{Pj-semiorth-dec-eq}, together with the following fact.
Here we use the notion of gluing of dg-categories along a bimodule, which reflects the situation of a semiorthogonal decomposition on the dg-level
(see \cite[Sec. 4]{Kuz-Lunts}).
  
\begin{lem}\label{Kuz-Lunts-lem} (\cite[Prop.\ 4.9]{Kuz-Lunts}) 
Let $\DD_1$ and $\DD_2$ be smooth dg-categories, flat over $\k$, and let $\DD=\DD_1\times_\varphi \DD_2$ be a dg-category
obtained by gluing $\DD_1$ and $\DD_2$ along a bimodule $\varphi$ in $\Perf(\DD_2^{op}\ot_{\k} \DD_1)$. Then $\DD$ is smooth over $\k$.
\end{lem}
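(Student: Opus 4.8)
\Pf . We outline a direct argument; the complete verification is \cite[Prop.\ 4.9]{Kuz-Lunts}. First, $\DD=\DD_1\times_\varphi\DD_2$ is flat over $\k$: by construction of the gluing every morphism complex of $\DD$ is either a morphism complex of $\DD_1$, of $\DD_2$, or a component of $\varphi$, and all of these are $\k$-flat (the first two by hypothesis, $\varphi$ because a perfect object of $D(\DD_2^{op}\ot_\k\DD_1)$ is a summand of a bounded complex of external products of $\k$-flat complexes, hence $\k$-flat). So, writing $\DD^e:=\DD^{op}\ot_\k\DD$, it remains to prove that the diagonal bimodule $\DD_\Delta$ lies in $\Perf(\DD^e)$. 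The plan is to filter $\DD_\Delta$ by sub-bimodules whose graded pieces are controlled separately by $\DD_1$, $\DD_2$ and $\varphi$.

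Write a component of a $\DD$-bimodule as $\Phi(X,Y)$, with $X,Y$ objects of $\DD=\DD_1\sqcup\DD_2$. Using only the defining vanishing $\Hom_\DD(X,Y)=0$ for $X$ in $\DD_1$ and $Y$ in $\DD_2$, one checks that $0\sub\varphi^{\flat}\sub N\sub\DD_\Delta$ is a filtration by sub-bimodules, where $N$ is spanned by the components $\Hom_\DD(X,Y)$ with $X$ in $\DD_2$, and $\varphi^{\flat}$ by those with $X$ in $\DD_2$ and $Y$ in $\DD_1$. (In the basic case of one object on each side, $\DD$ is the triangular dg-algebra $\left(\begin{smallmatrix}\DD_1&\varphi\\0&\DD_2\end{smallmatrix}\right)$ and the filtration reads $0\sub\left(\begin{smallmatrix}0&\varphi\\0&0\end{smallmatrix}\right)\sub\left(\begin{smallmatrix}0&\varphi\\0&\DD_2\end{smallmatrix}\right)\sub\DD_\Delta$.) One then identifies the successive quotients, as $\DD$-bimodules, with $\varphi^{\flat}\simeq\varphi$, $N/\varphi^{\flat}\simeq(\DD_2)_\Delta$ and $\DD_\Delta/N\simeq(\DD_1)_\Delta$, where in each case both $\DD$-actions factor through the evident dg-functor $\pi_i\colon\DD\to\DD_i^{+}$ onto the one-point extension $\DD_i^{+}=\DD_i\sqcup\{0\}$ (a zero object adjoined) that sends every object of $\DD_{3-i}$ to $0$. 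Concretely, $\DD_\Delta/N$ is the restriction of $(\DD_1^{+})_\Delta$ along $\pi_1^{op}\ot\pi_1\colon\DD^e\to(\DD_1^{+})^{op}\ot_\k\DD_1^{+}$, likewise $N/\varphi^{\flat}$ via $\pi_2$, and $\varphi^{\flat}$ is the restriction of $\varphi$ along $\pi_2^{op}\ot\pi_1\colon\DD^e\to(\DD_2^{+})^{op}\ot_\k\DD_1^{+}$.

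Finally, each graded piece lies in $\Perf(\DD^e)$. By hypothesis $(\DD_i)_\Delta\in\Perf(\DD_i^{op}\ot_\k\DD_i)$ and $\varphi\in\Perf(\DD_2^{op}\ot_\k\DD_1)$, and these are identified with $\Perf((\DD_i^{+})^{op}\ot_\k\DD_i^{+})$ and $\Perf((\DD_2^{+})^{op}\ot_\k\DD_1^{+})$ since adjoining a zero object does not change module categories; so it suffices that restriction of scalars along $\pi_1^{op}\ot\pi_1$, $\pi_2^{op}\ot\pi_2$ and $\pi_2^{op}\ot\pi_1$ preserves perfectness. For a dg-functor $F\colon\BB\to\CC$ of $\k$-flat dg-categories, $\mathrm{res}_F\colon D(\CC)\to D(\BB)$ carries $\Perf(\CC)$ into $\Perf(\BB)$ provided it sends each representable $\CC$-module to a perfect $\BB$-module, since $\mathrm{res}_F$ is exact and $\Perf(\CC)$ is generated by representables under cones, shifts and summands. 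Applying this twice (first to $\BB=\DD$, then to the $\ot_\k$-products, where $\k$-flatness is used to see that an external product of perfect complexes is perfect) reduces everything to the statement that the $\DD$-module obtained from a representable $\DD_i$-module by extension by zero over $\DD_{3-i}$ is perfect over $\DD$: this follows from the idempotent of $\DD$ cutting out $\DD_i$, whose complementary projector yields a length-one resolution of that extension by a projective $\DD$-module and the pullback of a perfect $\DD_{3-i}$-module. Since $\Perf(\DD^e)$ is closed under cones and shifts, the filtration of $\DD_\Delta$ then gives $\DD_\Delta\in\Perf(\DD^e)$, i.e.\ $\DD$ is smooth over $\k$.

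The only delicate part of this argument is the bookkeeping of the second paragraph: checking that the $\k$-submodule-wise prescription for $\varphi^{\flat}\sub N\sub\DD_\Delta$ really defines sub-bimodules, and that the quotients carry precisely the $\DD$-bimodule structures claimed, pulled back from $\DD_1$, $\DD_2$ and $\varphi$ along the functors $\pi_i$. Once this is established, the perfectness of the pieces and hence of $\DD_\Delta$ is a formal consequence of the generation of perfect complexes by representables.
\ed
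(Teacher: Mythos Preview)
The paper does not give its own proof of this lemma; it simply cites \cite[Prop.\ 4.9]{Kuz-Lunts}. Your overall strategy---filter the diagonal bimodule $\DD_\Delta$ with graded pieces controlled by $(\DD_1)_\Delta$, $(\DD_2)_\Delta$ and $\varphi$---is correct and is the approach taken in Kuznetsov--Lunts. The filtration and the identification of the quotients in your second paragraph are also correct.

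There is, however, a genuine gap in your third paragraph. You reduce perfectness of the graded pieces over $\DD^e$ to the claim that the extension by zero to $\DD$ of a representable $\DD_i$-module is always perfect over $\DD$, justified via a ``length-one resolution \ldots\ by a projective $\DD$-module and the pullback of a perfect $\DD_{3-i}$-module''. In the nontrivial direction (say $i=1$, with your convention $\Hom_\DD(\DD_1,\DD_2)=0$) the complementary piece in that resolution is the extension by zero of the right $\DD_2$-module $\varphi_X:=\Hom_\DD(-,X)|_{\DD_2}$, and you are implicitly assuming that $\varphi_X$ is perfect over $\DD_2$. This fails in general: perfectness of $\varphi$ as a \emph{bimodule} does not force its one-sided slices to be perfect. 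For a concrete counterexample take $\DD_1=\DD_2=\k[t]$ (one object each) and $\varphi=\k[t]\ot_\k\k[t]$ the free bimodule of rank one; then $\varphi$ is perfect over $\DD_2^{op}\ot_\k\DD_1$ and both $\DD_i$ are smooth, but $\varphi_X\cong\bigoplus_{n\ge 0}\k[t]$ is a free $\k[t]$-module of infinite rank, hence not compact. One then checks directly that the extension by zero of the representable $\DD_1$-module is \emph{not} perfect over the triangular ring $\DD$ (its projective resolution has infinite rank in degree $-1$). So the one-sided statement you reduce to is false.

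The repair is to stay at the bimodule level and use the four-block semiorthogonal decomposition of $\Perf(\DD^e)$ whose blocks are generated by the representables $h^{\DD^e}_{(X',X)}$ with $X'\in\DD_i$, $X\in\DD_j$. Each inclusion $\Perf(\DD_i^{op}\ot_\k\DD_j)\hra\Perf(\DD^e)$ sends representables to representables and therefore preserves perfectness, and the components of $\DD_\Delta$ with respect to this decomposition turn out to be $(\DD_1)_\Delta$, $(\DD_2)_\Delta$, $\varphi$ and $0$, perfect by smoothness and by hypothesis respectively. This is the argument carried out in \cite{Kuz-Lunts}; the point is that the relevant inclusions are \emph{not} the extension-by-zero functors along your $\pi_i$, and that distinction is exactly where your reduction breaks.
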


The more concrete situation in which we will apply this is described in the following

\begin{lem}\label{smooth-semiorth-lem} 
Let $\DD$ be a dg-category over $\k$ equipped with a semiorthogonal decomposition
$$\Perf(\DD)=\lan \lan M_1\ran,\lan M_2\ran \ran,$$
where for $i=1,2$, $\Ext^{\neq 0}(M_i,M_i)=0$ while $B_i:=\End(M_i)$ is a homologically smooth $\k$-algebra $B_i$, flat over $\k$.
Assume in addition that $B_1$ is Noetherian and the complex $\hom(M_1,M_2)$ has bounded cohomology, finitely generated as $B_1$-modules
and of finite projective dimension as $\k$-modules.
Then $\DD$ is homologically smooth.
\end{lem}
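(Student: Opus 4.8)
The plan is to exhibit $\DD$, up to Morita equivalence, as a gluing of $\Perf(B_1)$ and $\Perf(B_2)$ along a perfect bimodule, and then apply Lemma~\ref{Kuz-Lunts-lem}.

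Since $\Ext^{\neq 0}(M_i,M_i)=0$, the dg-algebra $\homs(M_i,M_i)$ has cohomology concentrated in degree $0$; such a dg-algebra is formal, hence quasi-isomorphic to $B_i=\End(M_i)$, so $\lan M_i\ran\simeq\Perf(B_i)$. The semiorthogonal decomposition $\Perf(\DD)=\lan\lan M_1\ran,\lan M_2\ran\ran$ forces $\homs(M_2,M_1)\simeq 0$ and presents $\DD$, up to Morita equivalence, as the gluing $\Perf(B_1)\times_\varphi\Perf(B_2)$ of \cite[Sec.~4]{Kuz-Lunts}, with gluing bimodule $\varphi\simeq\homs(M_1,M_2)$, regarded as a dg-bimodule over $B_1$ and $B_2$ through the formality identifications. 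By Lemma~\ref{Kuz-Lunts-lem} it then suffices to check: (i) $\Perf(B_1)$ and $\Perf(B_2)$ are smooth dg-categories, flat over $\k$; and (ii) $\varphi$ is perfect as a $B_1$-$B_2$-bimodule. Assertion (i) is immediate, since $\Perf(B_i)$ is Morita-equivalent to $B_i$, smoothness of dg-categories is a Morita invariant, and $B_i$ is homologically smooth and flat over $\k$ by hypothesis.

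For (ii) I would argue in two steps. \emph{Step 1: $\varphi\in\Perf(B_1)$.} This is the argument in the proof of Lemma~\ref{fin-proj-dim-perfect-lem}: writing $\varphi\simeq\varphi\ot^{\bL}_{B_1}B_1$ with $B_1$ viewed as the diagonal bimodule, and using that this diagonal bimodule is perfect over $B_1\ot_\k B_1^{op}$ (homological smoothness of $B_1$), one gets that $\varphi$ is a homotopy summand of a bounded complex of $B_1$-modules whose terms are finite direct sums of $B_1\ot_\k(\varphi|_{\k})$; since $\varphi|_{\k}$ has finite projective dimension over $\k$, these terms, and hence $\varphi$, have finite projective dimension over $B_1$; and since $\varphi$ has bounded cohomology, finitely generated over the Noetherian ring $B_1$, finite projective dimension implies $\varphi\in\Perf(B_1)$. \emph{Step 2: upgrade to the bimodule.} Now use that $B_2$ is homologically smooth, so that its diagonal bimodule is perfect over $B_2\ot_\k B_2^{op}$; the quasi-isomorphism $\varphi\simeq\varphi\ot^{\bL}_{B_2}B_2$ in the category of $B_1$-$B_2$-bimodules presents $\varphi$ as a homotopy summand of a bounded complex of bimodules of the form $(\varphi|_{B_1})\boxtimes F$ with $F$ a finite-rank free right $B_2$-module. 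Each such is an external tensor product of a perfect $B_1$-module (by Step 1) with a perfect $B_2$-module, hence a perfect $B_1$-$B_2$-bimodule; therefore so is $\varphi$. Combined with (i) and Lemma~\ref{Kuz-Lunts-lem}, this shows $\DD$ is homologically smooth.

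The point that demands the most care is the identification in the second paragraph: verifying that the given semiorthogonal decomposition genuinely presents $\DD$ as the gluing $\Perf(B_1)\times_\varphi\Perf(B_2)$ in the precise sense of \cite{Kuz-Lunts}, and tracking the left/right conventions so that the perfectness proved in Step 2 is perfectness in the category of bimodules to which Lemma~\ref{Kuz-Lunts-lem} applies. The homological input in the third paragraph is the technical heart, but it is only a two-fold application of the standard device of resolving the diagonal bimodule; splitting it into the two steps above — rather than applying Lemma~\ref{fin-proj-dim-perfect-lem} directly to $B_1\ot_\k B_2^{op}$ — is exactly what lets us use only the Noetherianity of $B_1$, which is all that is assumed.
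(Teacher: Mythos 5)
Your proof is correct and follows essentially the same route as the paper: identify $\DD$ as a gluing, reduce to perfectness of the bimodule $\homs(M_1,M_2)$, and establish perfectness as a $B_1$-module via the argument of Lemma~\ref{fin-proj-dim-perfect-lem}. The one place where you deviate is Step~2: the paper simply cites \cite[Lem.~2.8.2]{TV} for the implication ``$B_2$ smooth and $\varphi$ perfect over $B_1$ implies $\varphi$ perfect as a bimodule,'' whereas you re-derive this by a second pass of the ``resolve the diagonal'' device (this time for $B_2$), which is a valid and self-contained substitute for that citation.
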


\Pf . To apply Lemma \ref{Kuz-Lunts-lem} we need to check that the $B_1-B_2$-bimodule $\hom(M_1,M_2)$ is perfect. Since $B_2$ is smooth,
it is enough to check that it is perfect as a $B_1$-module (see \cite[Lem.\ 2.8.2]{TV}). By Lemma \ref{fin-proj-dim-perfect-lem}, this follows from our assumptions.
\ed

\noindent
{\it Proof of Proposition \ref{hom-smooth-prop}.}
To prove that ${\BB^\circ}$ is smooth we apply Lemma \ref{smooth-semiorth-lem} iteratively to semiorthogonal decompositions \eqref{Bcirc-semiorth-dec-eq}.
Since the algebras $R/(x_i)$ are homologically smooth, it suffices to check that  
the $R$-modules $\Ext^*(\ov{P}_k,P_{[1,i]})$, for $i<k$, are finitely generated and are free as $\k$-modules.
Computing these using the projective resolution of $\ov{P}_k$, we realize them as cohomology of the complex (in degrees $[0,1]$)
$$R/(x_{[1,i]})\rTo{x_{[i+1,k]}} R/x_{[1,i]}.$$
Thus, we only have nontrivial $\Ext^1$ isomorphic to $R/(x_{[1,i]},x_{[i+1,k]})$, and the assertion follows.

Similarly, to prove that ${\BB^{\circ\circ}}$ is smooth we apply Lemma \ref{smooth-semiorth-lem} iteratively to semiorthogonal decompositions \eqref{Pj-semiorth-dec-eq}.
It suffices to prove that the $R$-modules $\Ext^*(M\{[j-1],[j,m]\},P_I)$, where $I=[j',m']$ with $j'<j$, are finitely generated and are free as $\k$-modules.
Using the defining projective resolution of $M\{[j-1],[j,m]\}$, we get the complex (in degrees $[0,2]$) that computes this $\Ext^*$.
If $m'<j-1$ then this complex is zero. For $m'\ge j-1$ it has the form
$$R/(x_{[j,m]\cap I})\rTo{x_{j-1}}R/(x_{[j-1,m]\cap I})\rTo{x_{[j,m]\setminus I}}R/(x_{j-1}),$$
Thus, only $\Ext^2$ is nontrivial and it is isomorphic to $R/(x_{j-1},x_{[j,m]\setminus I})$, which is free over $\k$.
\ed

\subsection{Localization on the B-side}

We say that a collection of objects $(X_i)_{i\in I}$ in an abelian category $\CC$ {\it generates} $\CC$ if the minimal abelian
subcategory of $\CC$ closed under extensions and containing all $X_i$ is the entire $\CC$.

We want to describe the restriction functors $r^{\BB^{\circ\circ}}_{\BB^{\circ}}$ and
$r^{\BB^{\circ\circ}}_R$ (see \eqref{resfun1-eq} and \eqref{resfun2-eq}) as localizations with respect to some subcategories.

\begin{thm}\label{B-loc-thm} 
(i) The functor $r^{\BB^{\circ\circ}}_{\BB^{\circ}}$ induces an
equivalence of enhanced triangulated categories
$$\Perf(\BB^{\circ\circ})/\ker(r^{\BB^{\circ\circ}}_{\BB^{\circ}})\simeq \Perf(\BB^{\circ}),$$                        

For every $1\le i\le j<m\le k$, the module
$M\{[i,j],[j+1,m]\}$ belongs to the subcategory $\ker(r^{\BB^{\circ\circ}}_{\BB^{\circ}})\sub\Perf(\BB^{\circ\circ})$.
Furthermore, $\ker(r^{\BB^{\circ\circ}}_{\BB^{\circ}})$
is generated by the modules $(M\{[i],[i+1,j]\})_{i<j}$.


\noindent
(ii) Assume that the ${\mathbf k}$ is regular.
Then the functor $r^{\BB^{\circ\circ}}_R$ induces an 
equivalence of enhanced triangulated categories
$$D^b(\BB^{\circ\circ})/\ker(r^{\BB^{\circ\circ}}_R)\simeq D^b(R),$$
where $D^b(?)$ denotes the bounded derived category of finitely generated modules.  

For every pair of disjoint intervals $I,J\sub [1,k]$, such that $I\sqcup J$ is an interval,
the modules $M\{I,J\}$ belong to the subcategory $\ker(r^{\BB^{\circ\circ}}_R)$.
Furthermore, $\ker(r^{\BB^{\circ\circ}}_R)$
is generated by the modules $(M\{[i],[i+1,j]\},  M\{[j],[i,j-1]\})_{i<j}$ as a triangulated category.
\end{thm}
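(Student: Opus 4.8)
For part (i) the plan is to read everything off the semiorthogonal decomposition of Theorem \ref{A-semiorth-dec-thm}. Its rightmost piece $\Phi_1\Perf(\BB^{\circ}_{[1,k]})$ is the image of $i^{\BB^{\circ\circ}}_{\BB^{\circ}}$, which by Lemma \ref{res-ind-lem}(i) is a section of $r^{\BB^{\circ\circ}}_{\BB^{\circ}}$; hence $i^{\BB^{\circ\circ}}_{\BB^{\circ}}\circ r^{\BB^{\circ\circ}}_{\BB^{\circ}}$ is the projection of the decomposition onto that piece, $r^{\BB^{\circ\circ}}_{\BB^{\circ}}$ equals this projection followed by the equivalence $\Phi_1\Perf(\BB^{\circ})\xrightarrow{\ \sim\ }\Perf(\BB^{\circ})$, and therefore $\ker(r^{\BB^{\circ\circ}}_{\BB^{\circ}})$ is the left-orthogonal complement $\langle\im\Phi_k,\ldots,\im\Phi_2\rangle$ while $\Perf(\BB^{\circ\circ})/\ker(r^{\BB^{\circ\circ}}_{\BB^{\circ}})\simeq\Perf(\BB^{\circ})$. (As a cross-check: for $j\ge 2$ the functor $\Phi_j$ factors through the extension-by-zero functor $i_!\{j\}$, whose image consists of modules supported on subintervals of $[j,k]$, so $r^{\BB^{\circ\circ}}_{\BB^{\circ}}$ — which records only the components at the intervals $[1,m]$ — does annihilate $\im\Phi_j$.) By \eqref{Phi-j-main-formula}, $\Phi_{i+1}(P^{\circ}_{[i+1,j]}[x_1,\ldots,x_{i-1}])\simeq M\{[i],[i+1,j]\}$; since $\Phi_{i+1}$ is fully faithful and exact and $\Perf(\BB^{\circ}_{[i+1,k]}[\ldots])$ is generated by its indecomposable projectives, $\im\Phi_{i+1}$, and hence $\ker(r^{\BB^{\circ\circ}}_{\BB^{\circ}})$, is generated by the modules $M\{[i],[i+1,j]\}$ with $i<j$. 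Finally, the membership $M\{[i,j],[j+1,m]\}\in\ker(r^{\BB^{\circ\circ}}_{\BB^{\circ}})$ for $i<j$ (the case $i=j$ being one of the generators above) is a direct computation: applying the exact functor $(-)_{[1,l]}$ to the defining exact sequence of Lemma \ref{M-mod-lem} identifies $(M\{[i,j],[j+1,m]\})_{[1,l]}$ with the cokernel of multiplication by $x_{[i,j]}$ from the annihilator of $x_{[i,m]}$ to the annihilator of $x_{[j+1,m]}$ inside $R/(x_{[1,l]})$, and a short case analysis on the position of $l$ relative to $i$, $j$, $m$ shows that these two submodules of $R/(x_{[1,l]})$ coincide, so the cokernel vanishes.

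For part (ii), regularity of $\k$ lets us invoke Proposition \ref{hom-smooth-prop} together with Lemma \ref{fin-proj-dim-perfect-lem} to conclude $D^b(\BB^{\circ\circ})=\Perf(\BB^{\circ\circ})$ and $D^b(\BB^{\circ})=\Perf(\BB^{\circ})$. Writing $r^{\BB^{\circ\circ}}_R=r^{\BB^{\circ}}_R\circ r^{\BB^{\circ\circ}}_{\BB^{\circ}}$ and using part (i), the statement reduces to showing that $r^{\BB^{\circ}}_R$ realizes $D^b(R)$ as the Verdier quotient $\Perf(\BB^{\circ})/\ker(r^{\BB^{\circ}}_R)$; granting this, $\ker(r^{\BB^{\circ\circ}}_R)$ is generated by $\ker(r^{\BB^{\circ\circ}}_{\BB^{\circ}})$ — i.e. by the $M\{[i],[i+1,j]\}$ — together with lifts of generators of $\ker(r^{\BB^{\circ}}_R)$, and the membership $M\{I,J\}\in\ker(r^{\BB^{\circ\circ}}_R)$ for all admissible $I,J$ follows from the same componentwise computation as in (i), now taken at $l=k$ and in both orientations of $I$ against $J$. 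For the reduced statement I would induct on $k$ using the decomposition \eqref{Bcirc-semiorth-dec-eq}, $\Perf(\BB^{\circ}_{[1,k]})=\langle\langle\ov{P}_k\rangle,\ i^{\BB^{\circ}}_{\BB^{\circ}_{[1,k-1]}[x_k]}\Perf(\BB^{\circ}_{[1,k-1]}[x_k])\rangle$, in which $\langle\ov{P}_k\rangle\simeq\Perf(R/(x_k))$ with $R/(x_k)=\k[x_1,\ldots,x_{k-1}]$ regular, while $\BB^{\circ}_{[1,k-1]}[x_k]$ is the analogous resolution of $R_{[1,k-1]}[x_k]=\k[x_1,\ldots,x_k]/(x_1\ldots x_{k-1})$, to which the inductive hypothesis applies. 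One then has to assemble $D^b(R)$ out of $D^b(R/(x_k))$ and $D^b(R_{[1,k-1]}[x_k])$ along the Milnor square coming from $\Spec R=V(x_k)\cup V(x_1\ldots x_{k-1})$, with intersection $\Spec R_{[1,k-1]}$, and match it against the displayed decomposition; the kernel generators introduced at the $k$-th step are (the images in $\Perf(\BB^{\circ})$ of) the modules $M\{[j],[i,j-1]\}$, and the induction produces the rest, so that in the end $\ker(r^{\BB^{\circ\circ}}_R)$ is generated as a triangulated category by $(M\{[i],[i+1,j]\},M\{[j],[i,j-1]\})_{i<j}$.

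The main obstacle is this reduced statement of (ii): that $r^{\BB^{\circ}}_R$ is a localization onto all of $D^b(R)$, not merely onto $\Perf(R)$. Essential surjectivity, up to idempotent completion, is comparatively easy — $r^{\BB^{\circ}}_R(P_{[1,m]})\simeq R/(x_{[1,m]})$ for $1\le m\le k$ (so in particular $R$ itself, $m=k$, is hit), and from the modules $R/(x_{[1,m]})$ one recovers every $R/(x_i)$ via the exact sequences $0\to R/(x_{[i+1,m]})\to R/(x_{[i,m]})\to R/(x_i)\to 0$ and their iterates, after which, each $R/(x_i)$ being regular, the thick subcategory generated by the $R/(x_i)$ contains all $R/\mathfrak p$ and hence is all of $D^b(R)$. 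What takes real work is full faithfulness of the induced functor $\Perf(\BB^{\circ})/\ker(r^{\BB^{\circ}}_R)\to D^b(R)$: the Verdier quotient has to create exactly the higher $\Ext$-groups over $R$ — for instance $\Ext^{\ge 2}_R(R/(x_k),R/(x_k))\ne 0$ although $\Ext^{\ge 1}_{\BB^{\circ}}(\ov{P}_k,\ov{P}_k)=0$ by \eqref{B-mod-Ext-eq} — and controlling this is where one must combine the unbounded Bousfield localization $D(\BB^{\circ})/\ker(r^{\BB^{\circ}}_R)\simeq D(R)$ — valid because $r^{\BB^{\circ}}_R$ has the fully faithful right adjoint given by the derived functor of $\Hom_R(e_{[1,k]}\BB^{\circ},-)$, whose counit is an isomorphism since $e_{[1,k]}\BB^{\circ}e_{[1,k]}=R$ — with a careful check that this localization is compatible with the semiorthogonal decomposition \eqref{Bcirc-semiorth-dec-eq} and the Milnor square used in the induction.
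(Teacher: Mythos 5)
Your treatment of part (i) matches the paper's: the localization statement is read off the semiorthogonal decomposition of Theorem \ref{A-semiorth-dec-thm}, the generators of the kernel come from \eqref{Phi-j-main-formula}, and the membership $M\{[i,j],[j+1,m]\}\in\ker(r^{\BB^{\circ\circ}}_{\BB^{\circ}})$ is a direct componentwise surjectivity check. That part is fine.

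For part (ii) you depart from the paper, and the departure contains a genuine gap, which you yourself flag as ``the main obstacle.'' The paper does \emph{not} factor through $r^{\BB^{\circ}}_R$ at all: it proves the equivalence directly for $r^{\BB^{\circ\circ}}_R$ by working at the level of abelian categories. The functor $r^{\BB^{\circ\circ}}_R:\BB^{\circ\circ}\text{-}\mod\to R\text{-}\mod$ is exact and, by Lemma \ref{res-ind-lem}(i), admits $i^{\BB^{\circ\circ}}_R$ as a cosection, so $R\text{-}\mod$ is a quotient of $\BB^{\circ\circ}\text{-}\mod$ by the Serre subcategory $\ker(r^{\BB^{\circ\circ}}_R)\cap\BB^{\circ\circ}\text{-}\mod$, and Miyachi's theorem \cite{miyachi} then gives $D^b(\BB^{\circ\circ})/\ker(r^{\BB^{\circ\circ}}_R)\simeq D^b(R)$ in one stroke. (The generation statement is proven separately, by a direct induction at the abelian level with the auxiliary functors $i_*(j)$.) Your proposal instead reduces to showing $\Perf(\BB^{\circ})/\ker(r^{\BB^{\circ}}_R)\simeq D^b(R)$ and attempts this by induction using the decomposition \eqref{Bcirc-semiorth-dec-eq} together with a ``Milnor square'' gluing of $D^b(R)$ out of $D^b(R/(x_k))$ and $D^b(R_{[1,k-1]}[x_k])$.

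The reason this route does not close: $D^b(R)$ does \emph{not} admit a semiorthogonal decomposition mirroring \eqref{Bcirc-semiorth-dec-eq}. The whole point of passing to the noncommutative resolution $\BB^\circ$ is to create semiorthogonality between the pieces $\ov{P}_i$ that is absent downstairs — over $R$ the objects $R/(x_i)$ and $R/(x_j)$ have nonzero $\Ext$-groups in both directions and in arbitrarily high cohomological degree. Gluing $D^b(R)$ from the two closed pieces over their intersection $\Spec R_{[1,k-1]}$ would require a recollement-type statement, not a semiorthogonal decomposition, and there is no reason for that recollement to be compatible with the semiorthogonal decomposition of $\Perf(\BB^{\circ})$ under the restriction functor. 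You are closer to the mark with your final observation — that $r^{\BB^{\circ}}_R$ has a fully faithful right adjoint whose counit is an isomorphism, giving the unbounded Bousfield localization $D(\BB^{\circ})/\ker(r^{\BB^{\circ}}_R)\simeq D(R)$ — but you leave the descent from unbounded to bounded derived categories as an unverified ``careful check.'' That descent is precisely what a theorem of Miyachi's type delivers, and it is what the paper invokes; without it the argument does not complete. (The reductions $D^b(\BB^{\circ\circ})=\Perf(\BB^{\circ\circ})$ and $D^b(\BB^{\circ})=\Perf(\BB^{\circ})$ via Proposition \ref{hom-smooth-prop} and Lemma \ref{fin-proj-dim-perfect-lem} are valid but not actually needed in the paper's proof, which never passes through $\Perf$ for part (ii).)
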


\Pf . (i) The first assertion follows immediately from the
semiorthogonal decomposition of Theorem \ref{A-semiorth-dec-thm} together with the adjunction
of the induction and restriction functors 
$(i_{\BB^{\circ}}^{\BB^{\circ\circ}},r^{\BB^{\circ\circ}}_{\BB^{\circ}})$.

To check that $M\{[i,j],[j+1,m]\}$ belongs to $\ker(r^{\BB^{\circ\circ}}_{\BB^{\circ}})$, we need to prove
the surjectivity of the map
$$(P_{[i,m]})_{[1,r]}\to (P_{[j+1,m]})_{[1,r]}$$ 
induced by $x_{[i,j]}$. In other words, we need to check that the map
$$(x_{[1,r]\setminus [i,m]})/(x_{[1,r]})\rTo{x_{[i,j]}} (x_{[1,r]\setminus [j+1,m]})/(x_{[1,r]})$$
is surjective for every $m$. Indeed, if $j\ge r$ then the target is zero. Otherwise $j<r$ and the assertion
follows from the equality $x_{[1,r]\setminus [j+1,m]}=x_{[i,j]}x_{[1,r]\setminus [i,m]}$.

By Theorem \ref{A-semiorth-dec-thm}, the subcategory $\ker(r^{\BB^{\circ\circ}}_{\BB^{\circ}})$
is generated by the images of $\Phi_2,\ldots,\Phi_n$. Hence, it is generated by the images of projective modules
under $\Phi_2,\ldots,\Phi_n$. Since  these images are given by formula \eqref{Phi-j-main-formula},
the last assertion follows.

\noindent
(ii) The restriction functor between abelian categories
$$r^{\BB^{\circ\circ}}_{R}: \BB^{\circ\circ}-\mod\to R-\mod$$
is exact. Furthermore, by Lemma \ref{res-ind-lem}(i) the induction functor
$i^{\BB^{\circ\circ}}_{R}$ provides a cosection, so the category $R-\mod$ is a colocalization of
$\BB^{\circ\circ}-\mod$, which gives an equivalence
$$R-\mod\simeq \BB^{\circ\circ}-\mod/(\ker(r^{\BB^{\circ\circ}}_R)\cap\BB^{\circ\circ}-\mod).$$
The similar equivalence with derived categories follows from the work \cite{miyachi}.

As in (i), the assertion that $M\{I,J\}$ belongs to $\ker(r^{\BB^{\circ\circ}}_R)$
amounts to the surjectivity of the map
$$(x_{[1,k]\setminus (I\sqcup J)})\rTo{x_I} (x_{[1,k]\setminus J}),$$
which follows from the equality $x_{[1,k]\setminus J}=x_Ix_{[1,k]\setminus (I\sqcup J)}$.

Let $\CC\sub {\BB^{\circ}}-\mod$ denote the kernel of $r^{\BB^{\circ}}_R$, i.e., the full subcategory consisting of 
${\BB^{\circ}}$-modules $M$ such that $M_{[1,k]}=0$. It is enough to prove that the modules $r^{\BB^{\circ\circ}}_{\BB^{\circ}}(M\{[j],[i,j-1]\})$ generate
$\CC$ as an abelian category. We will use the induction on $k$. In the case $k=1$, we have $\CC=0$ so there is nothing
to prove.

Let $\DD\sub {\BB^{\circ}}-\mod$ denote the full subcategory of ${\BB^{\circ}}$-modules $M$ such that $M_1=0$.
Let us set ${\ov{\BB}^{\circ\circ}}=\BB^{\circ\circ}_{[2,k]}[x_1]$, ${\ov{\BB}^{\circ}}=\BB^{\circ}_{[2,k]}[x_1]$.
Note that the restriction functor $r^{\BB^{\circ}}_{{\ov{\BB}^{\circ}}}:{\BB^{\circ}}-\mod\to{\ov{\BB}^{\circ}}-\mod$ 
induces an equivalence
$$\Phi:\DD\simeq {\ov{\BB}^{\circ}}-\mod.$$ 
Furthermore, as we have seen in Lemma \ref{res-ind-lem}(ii), we
have $r^{\BB^{\circ\circ}}_{\BB^{\circ}} i^{\BB^{\circ\circ}}_{{\ov{\BB}^{\circ\circ}}}(M)\in \DD$ and 
$$\Phi(r^{\BB^{\circ\circ}}_{\BB^{\circ}} i^{\BB^{\circ\circ}}_{{\ov{\BB}^{\circ\circ}}}(M))=r^{{\ov{\BB}^{\circ\circ}}}_{{\ov{\BB}^{\circ}}}(M).$$ 

Let $\CC'\sub {\ov{\BB}^{\circ}}-\mod$ denote the kernel of $r^{{\ov{\BB}^{\circ}}}_{R[x_1]}$. 
By induction assumption, $\CC'$ is generated by the modules $r^{{\ov{\BB}^{\circ\circ}}}_{{\ov{\BB}^{\circ}}}(\ov{M}\{[j],[i,j-1]\})$, where $2\le i<j\le k$, and
$\ov{M}\{[j],[i,j-1]\}$ are ${\ov{\BB}^{\circ\circ}}$-modules defined similarly to $M\{[j],[i,j-1]\}$. We have $i^{\BB^{\circ\circ}}_{{\ov{\BB}^{\circ\circ}}}\ov{M}\{[j],[i,j-1]\}=M\{[j],[i,j-1]\}$ for $2\le i<j\le k$,
so  
$$\Phi^{-1}(r^{{\ov{\BB}^{\circ\circ}}}_{{\ov{\BB}^{\circ}}}(\ov{M}\{[j],[i,j-1]\}))=r^{\BB^{\circ\circ}}_{\BB^{\circ}} i^{\BB^{\circ\circ}}_{{\ov{\BB}^{\circ\circ}}}(\ov{M}\{[j],[i,j-1]\})=r^{\BB^{\circ\circ}}_{\BB^{\circ}}(M\{[j],[i,j-1]\}).$$
Thus, the subcategory $\Phi^{-1}(\CC')=\CC\cap\DD$ belongs to the subcategory generated by the modules $r^{\BB^{\circ\circ}}_{\BB^{\circ}}(M\{[j],[i,j-1]\})$, with $2\le i<j\le k$.

It remains to check that $\CC$ is generated by $\CC\cap\DD$ and by the modules $r^{\BB^{\circ\circ}}_{\BB^{\circ}}(M\{[j],[1,j-1]\})$.
Note that $\CC\cap\DD$ is precisely the kernel of the functor
$$r:\CC\to R/(x_1,x_2\ldots x_k)-\mod: M\mapsto M_1.$$
Now for $2\le j\le k$, let us define the functor 
$$i_*(j):R/(x_1,x_j)\to \CC$$
sending $N$ to the ${\BB^{\circ}}$-module $M$ such that
$$M_{[1,m]}=\begin{cases} N & m<j, \\ 0 & m\ge j,\end{cases}$$
with the maps $M_{[1,m-1]}\to M_{[1,m]}$ (resp., $M_{[1,m]}\to M_{[1,m-1]}$) being the identity maps 
(resp., multiplication by $x_m$) for $m<j$.

For $2\le j\le k$, let $\CC_j\sub \CC$ denote the full subcategory of $M\in\CC$ such that $x_jM_1=0$.
Since for every $M\in\CC$ one has $x_2\ldots x_kM_1=0$, it is easy to see that the subcategories $\CC_j$ generate $\CC$
as an abelian category. 
Now for every $M\in\CC_j$, there is a natural morphism 
$$i_*(j)(M_1)\to M$$
inducing the identity map $i_*(j)(M_1)_1\to M_1$.
Hence, the kernel and the cokernel of the morphism $i_*(j)(M_1)\to M$ are in $\CC\cap\DD$.
It follows that the images of the functors $i_*(j)$ together with $\CC\cap\DD$ generate $\CC$.

It remains to observe that for each $j\ge 2$, there is an exact sequence
$$0\to i_*(j)(R/(x_1,x_j))\to r^{\BB^{\circ\circ}}_{\BB^{\circ}} M\{[j],[1,j-1]\}\to r^{\BB^{\circ\circ}}_{\BB^{\circ}} M\{[j],[2,j-1]\}\to 0$$
(where for $j=2$ the last term is zero).  
This implies that the image of $i_*(j)$ is contained in the subcategory generated by $r^{\BB^{\circ\circ}}_{\BB^{\circ}} M\{[j],[1,j-1]\}$ and 
$r^{\BB^{\circ\circ}}_{\BB^{\circ}} M\{[j],[2,j-1]\}$. 
\ed

          

\subsection{Cases $k=2$ and $k=3$}
          
Consider first the case $k=2$. We have $R_{[1,2]}=\k[x_1,x_2]/(x_1x_2),$ 
$${\BB^{\circ\circ}_{[1,2]}}=\End_R(R\oplus R/(x_1)\oplus R/(x_2)), \ \ {\BB^{\circ}_{[1,2]}}=\End_R(R\oplus R/(x_1)).$$
Note that we have an isomorphism 
$$R/(x_1)\oplus R/(x_2)\simeq I,$$
where $I=(x,y)\sub R$ is the ideal corresponding to the node. Thus, ${\BB^{\circ\circ}_{[1,2]}}$ is isomorphic to the Auslander order over the affine nodal
curve $\Spec(R_{[1,2]})$. We can also think of ${\BB^{\circ\circ}_{[1,2]}}$ as the algebra of the quiver with relations given in Figure \ref{quiverpop},
where the vertex $L_0$ (resp., $L_2$) corresponds to $[2]$ (resp., $[1]$).

The semiorthogonal decomposition of Proposition \ref{B-semiorth-dec-prop} becomes
$$\Perf({\BB^{\circ}_{[1,2]}})=\lan \lan \ov{P}_2\ran, \lan P_1\ran\ran,$$
where $\lan \ov{P_2}\ran\simeq \Perf(\k[x_1])$, $\lan P_1\ran\simeq \Perf(\k[x_2])$. Furthermore, $\Ext^*(\ov{P}_2,P_1)\simeq \k[-1]$.
 
The semiorthogonal decomposition of Theorem \ref{A-semiorth-dec-thm} becomes
$$\Perf({\BB^{\circ\circ}_{[1,2]}})=\lan \lan M\{[1][2]\}\ran, i^{\BB^{\circ\circ}}_{\BB^{\circ}}\Perf({\BB^{\circ}_{[1,2]}})\ran,$$
where $\lan M\{[1][2]\}\ran\simeq \Perf(\k)$. Furthermore, it is easy to see that $M\{[1][2]\}$ corresponds to the simple module over
the quiver in Figure \ref{quiverpop} at the vertex $L_0$.
 
The localization functor $D^b({\BB^{\circ\circ}_{[1,2]}})\to D^b(R_{[1,2]})$ takes $P_{[1,2]}$ to $R$ and $P_{[i]}$ to $R/(x_i)$. The kernel of the functor is
generated by the modules $M\{[1][2]\}$ and $M\{[2][1]\}$, which are precisely the simple modules at vertices $L_0$ and $L_2$ of the quiver in Figure \ref{quiverpop}.

Next, let us consider the case $k=3$. We have $R=R_{[1,3]}=\k[x_1,x_2,x_3]/(x_1x_2x_3)$,
$${\BB^{\circ}_{[1,3]}}=\End_R(R\oplus R/(x_1x_2)\oplus R/(x_1)),$$
while the algebra ${\BB^{\circ\circ}_{[1,3]}}$ can be described by the quiver with relations over $R$ given in Figure \ref{figure2},
where the vertices $L_2\times L_3$, $L_0\times L_3$ and $L_0\times L_1$ correspond to $[1]$, $[2]$ and $[3]$, $L_1\times L_3$ and $L_0\times L_2$
correspond to $[1,2]$ and $[2,3]$, $L_1\times L_2$ corresponds to $[1,3]$.
The algebra ${\BB^{\circ}_{[1,3]}}$ is described by the subquiver with vertices $[1]$, $[1,2]$ and $[1,3]$.

The semiorthogonal decomposition of Proposition \ref{B-semiorth-dec-prop} becomes
$$\Perf({\BB^{\circ}_{[1,3]}})=\lan \Perf(\k[x_2,x_3]), \Perf(\k[x_1,x_3]), \Perf(\k[x_2,x_3])\ran.$$
 
The semiorthogonal decomposition of Theorem \ref{A-semiorth-dec-thm} becomes
$$\Perf({\BB^{\circ\circ}_{[1,2]}})=\lan \lan M\{[2][3]\}\ran, \lan M\{[1][2,3]\}, M\{[1][2]\}\ran, i^{\BB^{\circ\circ}}_{\BB^{\circ}}\Perf({\BB^{\circ}_{[1,3]}})\ran,$$
where $\lan M\{[2][3]\}\ran\simeq \Perf(\k[x_1])$ and
$\lan M\{[1][2,3]\}, M\{[1][2]\}\ran\simeq \Perf({\BB^{\circ}_{[2,3]}})$.
The kernel of the localization $D^b({\BB^{\circ}_{[1,3]}})\to D^b(R_{[1,3]})$ is generated by the modules
$i_*(2)(R/(x_1,x_2))$, $i_*(3)(R/(x_1,x_3))$ and $r^{\BB^{\circ\circ}}_{\BB^{\circ}}M\{[3][2]\}$,
where the first module is $R/(x_1,x_2)$ supported at the vertex $[1]$, the last is $R/(x_2,x_3)$ supported at the vertex $[1,2]$.
The module $i_*(3)(R/(x_1,x_3))$ is supported at vertices $[1]$ and $[1,2]$, with both components given by $R/(x_1,x_3)$, the identity map from $[1,2]$ to $[1]$, 
and the multiplication by $x_2$ map from $[1]$ to $[1,2]$.

\section{Homological mirror symmetry}\label{comparison-sec}

\subsection{Isomorphism of algebras corresponding to two stops}

Now we specialize to the case $n=k-1$ on the A-side and the $\Z$-grading on the Fukaya category such that
$\deg(x_i)=0$ (see Theorem \ref{A-side-morphisms-general-thm}). The main observation is that in this case the algebra $\cA^{\circ\circ}$ on the A-side
is isomorphic to the algebra $(\BB^{\circ\circ})^{op}$.

First, we observe that there is the following bijection between the Lagrangians $(L_S)$ in this case and
the subsegments $I\sub [1,k]$:
\begin{equation}\label{Lagr-intervals-correspondence}
L_{[0,k]\setminus\{i,j\}}\leftrightarrow [i+1,j],
\end{equation}
where $0\le i<j\le k$.

\begin{lem}\label{mirror-lem} 
For $n=k-1$ 
one has an isomorphism of $R$-algebras
$$\cA^{\circ\circ}\simeq \BB^{\circ\circ}\simeq (\BB^{\circ\circ})^{op}=\bigoplus_{I,J\sub [1,k]}\Hom_{\BB^{\circ\circ}}(P_I,P_J),$$
compatible with the correspondence
\eqref{Lagr-intervals-correspondence}.
\end{lem}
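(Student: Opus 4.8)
The plan is to compare two explicit presentations: on the A-side we have Theorem~\ref{A-side-morphisms-general-thm}, which describes $\cA^{\circ\circ}$ in the case $n=k-1$ as an algebra over $R$ with generators $f_{S,S'}$ and multiplication rule \eqref{generators-composition-eq}; on the B-side we have the morphism algebra $(\BB^{\circ\circ})^{op}=\bigoplus_{I,J}\Hom_{\BB^{\circ\circ}}(P_I,P_J)$, for which $\End(P_I)=R/(x_I)$ and the generating morphisms are the maps $\a\{I,J\}$ and $\b\{I,J\}$ together with the bimodule structure induced by the surjections $R/(x_J)\to R/(x_I)$ for $I\subset J$. The first step is to record that the correspondence \eqref{Lagr-intervals-correspondence} is a bijection: a size $k-1$ subset $S\subset[0,k]$ is the complement of a two-element set $\{i,j\}$ with $0\le i<j\le k$, and we send it to the subinterval $[i+1,j]$; conversely every nonempty subinterval arises this way. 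Under this bijection $\End(L_S)$ must match $\End(P_I)$, which by Theorem~\ref{A-side-morphisms-general-thm}(i) with $\deg(x_i)=0$ reads $\End(L_{[0,k]\setminus\{i,j\}})\simeq \cA_{[i+1,j-1]}=R_{[1,k]}/(\text{appropriate relation})$, and this should be identified with $R/(x_{[i+1,j]})=\End(P_{[i+1,j]})$; so the first task is to check these $R$-algebra identifications of the ``diagonal'' pieces agree. (Here one uses that for $S=[0,k]\setminus\{i,j\}$ the complement is a single two-point set, so $S$ is a single interval-complement and the formula for $\cA(S,S)$ specializes correctly.)

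Next I would match the off-diagonal morphism spaces and the generators. For a pair $S,S'$ corresponding to intervals $I,I'$, one checks: $S$ and $S'$ are \emph{close} (Definition before Corollary~\ref{pair-partition-cor}) if and only if $\Hom_{\BB^{\circ\circ}}(P_I,P_{I'})\neq 0$, and in that case one compares \eqref{hom-S-S'-isom} with $\Hom_R(R/(x_{I'}),R/(x_I))$. Concretely, when $S,S'$ differ by moving one endpoint of the pair $\{i,j\}$, the interval $I$ changes to $I'$ by adding or deleting one boundary element, and the generator $f_{S,S'}$ should go to $\a\{I',I\setminus I'\}$ or $\b\{\cdot,\cdot\}$ accordingly; more general close pairs are compositions of such elementary moves. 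I would then verify that the composition rule \eqref{generators-composition-eq} on the A-side — with the correction factor $\prod_{[i-1,i]\subset T(S,S',S'')}x_i$ — matches the composition of the corresponding $\a$'s and $\b$'s in $(\BB^{\circ\circ})^{op}$; the point is that composing $\b\{I,J\}$ after $\a$ in the ``wrong'' order produces multiplication by $x_J$ (cf.\ the exact sequence in Lemma~\ref{M-mod-lem}, $R/(x_J)\xrightarrow{x_I}R/(x_{I\sqcup J})$), which is exactly the source of the monomial correction. Since both algebras are generated over $R$ by these elementary morphisms with the stated relations, matching generators and relations gives the isomorphism; the compatibility with the $R$-algebra structure is automatic because on both sides $x_i$ acts through the surjections indexed by $i$.

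The isomorphism $\BB^{\circ\circ}\simeq(\BB^{\circ\circ})^{op}$ is a separate, purely B-side statement: it follows from the self-duality $\Hom_R(R/(x_I),R/(x_J))\simeq \Hom_R(R/(x_J),R/(x_I))$ sending $\a\{I,J\}\leftrightarrow\b\{I,J\}$, which one checks is compatible with composition (equivalently, $\BB^{\circ\circ}=\End_R(\bigoplus_I R/(x_I))$ carries the transpose anti-involution, and the module $\bigoplus_I R/(x_I)$ is, up to the obvious relabelling $I\mapsto I$, isomorphic to its ``$\Hom_R(-,R)$-twisted'' version — or more simply, there is a symmetric perfect pairing between $R/(x_I)$ and $R/(x_{I})$ identifying the $\Hom$-spaces). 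I would state this as a short lemma or fold it into the main argument.

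The main obstacle I anticipate is bookkeeping rather than conceptual: getting the sign-free combinatorics of Corollary~\ref{pair-partition-cor} (the decomposition into subintervals $I_a$, $J_b$) to line up cleanly with the $\a/\b$ generators, and in particular verifying the composition correction factor in \eqref{generators-composition-eq} exactly reproduces the multiplication-by-monomial that appears when composing in $(\BB^{\circ\circ})^{op}$. One must be careful that the A-side intervals $[i,j]$ (indexing Reeb chords between $L_i,\dots,L_j$) and the B-side intervals (indexing the modules $R/(x_I)$) are related by the index shift built into \eqref{Lagr-intervals-correspondence}, i.e.\ $\homs(L_p,L_{p+1})$-type generators on the A-side correspond to the variable $x_{p+1}$; keeping this shift consistent throughout is where a careless error would creep in. Once the dictionary is fixed, checking the finitely many types of elementary moves and their pairwise compositions is routine, and the formality statement Theorem~\ref{A-side-morphisms-general-thm}(iv) guarantees there are no higher-product obstructions to upgrading this to an equivalence of dg-algebras (needed later, but the present lemma only asserts the isomorphism of associative $R$-algebras).
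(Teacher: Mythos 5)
Your plan matches the paper's proof: identify $\Hom_{\BB^{\circ\circ}}(P_I,P_J)\simeq(x_{I\setminus J})/(x_I)\simeq R/(x_{I\cap J})$, observe that close pairs on the A-side correspond exactly to $I\cap J\neq\emptyset$, send $f_{S,S'}\mapsto x_{I\setminus J}$ and check compositions; the paper carries out the last two steps by running through the four inequality cases for $\{a,b\}$ vs.\ $\{a',b'\}$ and verifying the combinatorial identity $x_{I\setminus J}\cdot x_{J\setminus K}=x_{I\setminus K}\cdot\prod_{i\in(J\setminus(I\cup K))\cup(I\cap K\setminus J)}x_i$ against \eqref{generators-composition-eq}, rather than reducing to ``elementary moves.''

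One small slip worth flagging: the anti-involution $\BB^{\circ\circ}\simeq(\BB^{\circ\circ})^{op}$ should match $\a\{I,J\}$ with $\b\{J,I\}$, not $\b\{I,J\}$. Indeed $\a\{I,J\}\in\Hom(P_I,P_{I\sqcup J})$ needs to be sent into $\Hom(P_{I\sqcup J},P_I)$, which is where $\b\{J,I\}$ (multiplication by $x_J$ on $R/(x_I)\to R/(x_{I\sqcup J})$) lives, whereas $\b\{I,J\}\in\Hom(P_{I\sqcup J},P_J)$ has the wrong target. The paper sidesteps the $\a$/$\b$ bookkeeping altogether by describing the anti-involution as the identity under the common identification of both $\Hom$-spaces with $R/(x_{I\cap J})$. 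Also, your ``generators and relations'' framing is dispensable: since both algebras are already presented by explicit $R$-module $\Hom$-spaces with a known composition law, it suffices (and is cleaner) to write down the $R$-module isomorphisms $\cA(S,S')\to\Hom(P_I,P_J)$ directly and verify multiplicativity, avoiding the need to argue that the elementary $\a$'s and $\b$'s generate with no extra relations.
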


\Pf . We have
$$\Hom_{\BB^{\circ\circ}}(P_I,P_J)\simeq\Hom_R(R/(x_J),R/(x_I))\simeq (x_{I\setminus J})/(x_I)=
x_{I\setminus J}\cdot R/(x_{I\cap J}).$$
It is easy to see that mapping $\Hom_{\BB^{\circ\circ}}(P_I,P_J)$ to
$\Hom_{\BB^{\circ\circ}}(P_J,P_I)$ using these identifications gives an isomorphism of the algebra
$\BB^{\circ\circ}$ with its opposite algebra (see below for the computation of compositions).

On the other hand, for $S=[0,k]\setminus\{a,b\}$ and $S'=[0,k]\setminus\{a',b'\}$ we have the
following four possibilities, and we can calculate the subintervals $I_\bullet$, $J_\bullet$ of
Corollary \ref{pair-partition-cor} in each of them.

\noindent
{\it Case 1}. $a<a'$, $b<b'$. If $a'\ge b$ then $S$ and $S'$ are not close, so we can assume that $a'<b$.
In this case we have $J_1=[a,a']$, $J_2=[b,b']$, so using Theorem \ref{A-side-morphisms-general-thm}(ii) we get
\begin{align*}
&\cA(S,S')=\k[x_1,\ldots,x_a]\ot \k[x_{a+1},\ldots,x_{a'}]\ot \k[x_{a'+1},\ldots,x_b]/(x_{[a'+1,b]})\\
&\ot \k[x_{b+1},\ldots,x_b'] \ot \k[x_{b'+1},\ldots,x_k] \cdot f_{S,S'} = 
R/(x_{[a'+1,x_b]})\cdot f_{S,S'}.
\end{align*}

\noindent
{\it Case 2}. $a<a'$, $b\ge b'$. In this case we have $J_1=[a,a']$, $I_1=[b',b]$ ($I_1$ should be omitted if $b=b'$), so
by Theorem \ref{A-side-morphisms-general-thm}(ii) we get
\begin{align*}
&\cA(S,S')=\k[x_1,\ldots,x_a]\ot \k[x_{a+1},\ldots,x_{a'}]\ot \k[x_{a'+1},\ldots,x_{b'}]/(x_{[a'+1,b']})\\
&\ot \k[x_{b'+1},\ldots,x_b]\ot \k[x_{b+1},\ldots,x_k] \cdot f_{S,S'} = 
R/(x_{[a'+1,x_{b'}]})\cdot f_{S,S'}.
\end{align*}

\noindent
{\it Case 3}. $a\ge a'$, $b<b'$. In this case we have $I_1=[a',a]$, $J_1=[b,b']$ ($I_1$ should be omitted if $a=a'$), so
by Theorem \ref{A-side-morphisms-general-thm}(ii), we get
\begin{align*}
&\cA(S,S')=\k[x_1,\ldots,x_{a'}]\ot \k[x_{a'+1},\ldots,x_{a}]\ot \k[x_{a+1},\ldots,x_{b}]/(x_{[a+1,b]})\\
&\ot \k[x_{b+1},\ldots,x_{b'}]\ot \k[x_{b'+1},\ldots,x_k] \cdot f_{S,S'} = 
R/(x_{[a+1,x_{b}]})\cdot f_{S,S'}.
\end{align*}

\noindent
{\it Case 4}. $a\ge a'$, $b\ge b'$. If $a\ge b'$ then $S$ and $S'$ are not close, so we can assume that $a<b'$.
In this case we have $I_1=[a',a]$, $I_2=[b',b]$ (where $I_1$ is omitted if $a=a'$ and $I_2$ is omitted if $b=b'$).
By Theorem \ref{A-side-morphisms-general-thm}(ii), we get
\begin{align*}
&\cA(S,S')=\k[x_1,\ldots,x_{a'}]\ot \k[x_{a'+1},\ldots,x_{a}]\ot \k[x_{a+1},\ldots,x_{b'}]/(x_{[a+1,b']})\\
&\ot \k[x_{b'+1},\ldots,x_{b}]\ot \k[x_{b+1},\ldots,x_k] \cdot f_{S,S'} = 
R/(x_{[a+1,x_{b'}]})\cdot f_{S,S'}.
\end{align*}

In all of these cases we deduce that 
$$\cA(S,S')\simeq R/(x_{I\cap J})\cdot f_{S,S'},$$
where  $I=[a+1,b]$, $J=[a'+1,b']$.
So we get an identification of $R$-modules 
$$\cA(S,S')\simeq\Hom(P_I,P_J)$$
sending $f_{S,S'}$ to $x_{I\setminus J}$.

To check the compatibility with the composition, we note that for three intervals $I,J,K\sub [1,k]$ one has
$$x_{I\setminus J}\cdot x_{J\setminus K}=x_{I\setminus K}\cdot \prod_{i\in (J\setminus(I\cup K))\cup (I\cap K\setminus J)}x_i.$$
Thus, the assertion follows from Theorem \ref{A-side-morphisms-general-thm}(iii) and the equality
\begin{equation}\label{comb-triple-eq}
\{i:\ |\ [i-1,i]\sub T(S,S',S'')\}=(J\setminus(I\cup K))\cup (I\cap K\setminus J),
\end{equation}
where $L_S$, $L_{S'}$ and $L_{S''}$ correspond to $I$, $J$ and $K$ under \eqref{Lagr-intervals-correspondence}.
The proof of \eqref{comb-triple-eq} is a straightforward but tedious check, so we will consider only one of the cases.
Let 
\begin{align*}
&S=[0,k]\setminus\{a,b\}, \ \ S'=[0,k]\setminus\{a',b'\}, \ \ S''=[0,k]\setminus\{a'',b''\}, \\ 
&I=[a+1,b], \ \ J=[a'+1,b'], \ \ K=[a''+1,b''].
\end{align*}
Assume that $a<a'$, $b<b'$, $a''<a'$, $b''<b'$. Then we have $J_1=[a,a']$, $J_2=[b,b']$, $I'_1=[a'',a']$, $I'_2=[b'',b']$,
hence,
$$T(S,S',S'')=[\max(a,a''),a'] \cup [\max(b,b''),b'].$$
On the other hand, 
$$I\cap K\setminus J=[\max(a+1,a''+1),a'], \ \ J\setminus(I\cup K)=[\max(b,b'')+1,b'],$$
so the equality \eqref{comb-triple-eq} follows in this case.
The other cases are considered similarly.
\ed

\subsection{Equivalences of categories}

\begin{thm}\label{main-thm} 
There are equivalences of enhanced triangulated categories over $\k$,
\begin{equation}\label{2stop-A-B-equivalence}
\WW(\PP_{k-1},\La_Z)\simeq \Perf(\BB^{\circ\circ}_{[1,k]}),
\end{equation}
\begin{equation}\label{1stop-A-B-equivalence}
\WW(\PP_{k-1},\La_{1})\simeq \Perf(\BB^{\circ}_{[1,k]}),
\end{equation}
Assume that the base ring $\k$ is regular. Then we also have an equivalence
\begin{equation}\label{0stop-A-B-equivalence}
\WW(\PP_{k-1})\simeq D^b(R_{[1,k]}),
\end{equation}
The above equivalences are compatible with localization functors on the A-side and the restriction functors $r^{\BB^{\circ\circ}}_{\BB^{\circ}}$ and $r^{\BB^{\circ}}_R$.
Furthermore, the first equivalence sends the Lagrangian
$L_S=\prod_{a\in S} L_a$ for $S=[0,k]\setminus\{i,j\}$, where $i<j$, to the projective module $P_{[i+1,j]}$.
Here the Fukaya categories are equipped with the $\Z$-grading coming from a unique grading structure on 
$\PP_{k-1}=M_{k-1,k}$ such that $\deg(x_i)=0$ for $i=1,\ldots,k$ (see Theorem \ref{A-side-morphisms-general-thm}).
\end{thm}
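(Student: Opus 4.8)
The plan is to deduce the three equivalences from the algebra identification in Lemma \ref{mirror-lem} together with the formality result of Theorem \ref{A-side-morphisms-general-thm}(iv) and the localization statements on both sides. First I would establish the first equivalence \eqref{2stop-A-B-equivalence}. By Theorem \ref{A-side-morphisms-general-thm}(iv), the dg-algebra $\bigoplus_{S,S'}\homs(L_S,L_{S'})$ is formal, so it is quasi-isomorphic to $\cA^{\circ\circ}$ viewed as a dg-algebra concentrated in degree $0$ (with the $\Z$-grading given by $\deg(x_i)=0$, which forces every $f_{S,S'}$ into degree $0$ as well; one checks the cocycle condition from Theorem \ref{A-side-morphisms-general-thm}(iv) is satisfied by the all-zero assignment). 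Since the Lagrangians $L_S$ generate $\WW(\PP_{k-1},\La_Z)$ by Auroux's theorem, and their endomorphism dg-algebra is formal and isomorphic to $(\BB^{\circ\circ})^{op}$ by Lemma \ref{mirror-lem}, we obtain a quasi-equivalence between the full dg-subcategory on the $L_S$ and the dg-category of projective $\BB^{\circ\circ}$-modules $P_I$ (using $(\BB^{\circ\circ})^{op}\simeq\BB^{\circ\circ}$ and $\End(P_I)$ computations from Section \ref{B-sec}). Passing to split-closed pre-triangulated envelopes gives \eqref{2stop-A-B-equivalence}, with $L_S\mapsto P_{[i+1,j]}$ for $S=[0,k]\setminus\{i,j\}$ as recorded.

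Next I would obtain \eqref{1stop-A-B-equivalence} and \eqref{0stop-A-B-equivalence} by matching the localizations. On the A-side, Theorem \ref{A-localization-thm} identifies $\WW(\PP_{k-1},\La_1)$ with $\WW(\PP_{k-1},\La_Z)/\DD_2$ and $\WW(\PP_{k-1})$ with the quotient by $\langle\DD_1,\DD_2\rangle$; Proposition \ref{stop-resolutions-prop} expresses the generating objects $T_i\times X$ of $\DD_1,\DD_2$ as explicit iterated cones on the $L_S$. On the B-side, Theorem \ref{B-loc-thm} identifies $\Perf(\BB^{\circ})$ with $\Perf(\BB^{\circ\circ})/\ker(r^{\BB^{\circ\circ}}_{\BB^{\circ}})$ and $D^b(R)$ with $D^b(\BB^{\circ\circ})/\ker(r^{\BB^{\circ\circ}}_R)$, and gives generators for these kernels in terms of the modules $M\{[i],[i+1,j]\}$ and $M\{[j],[i,j-1]\}$. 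The key step is then to check that under the equivalence \eqref{2stop-A-B-equivalence} the subcategory $\DD_2$ (resp.\ $\langle\DD_1,\DD_2\rangle$) matches $\ker(r^{\BB^{\circ\circ}}_{\BB^{\circ}})$ (resp.\ $\ker(r^{\BB^{\circ\circ}}_R)$). For this it suffices to match generators: I would show that the cone expression for $T_2\times X$ in Proposition \ref{stop-resolutions-prop}, when $X$ is chosen so that the complement indices are $\{i,i+1,\ldots,j\}$ or similar, corresponds under $L_S\mapsto P_{[\bullet]}$ precisely to the defining exact sequences (Lemma \ref{M-mod-lem}) of the modules $M\{I,J\}$; the differentials $v_{[\cdot,\cdot]}$ on the A-side match the morphisms $\b\{\cdot,\cdot\}$ on the B-side under Lemma \ref{mirror-lem}. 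Since both sides are generated (as triangulated/abelian subcategories) by these matching objects, the kernels agree, and the induced functors on the Verdier quotients are equivalences. For \eqref{0stop-A-B-equivalence}, the regularity hypothesis on $\k$ is what allows Theorem \ref{B-loc-thm}(ii) to apply and identifies $D^b(R)$ with $D^b\mathrm{Coh}$ of the singular variety.

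I expect the main obstacle to be the precise bookkeeping in the second step: translating the combinatorial indexing of the cone decompositions in Proposition \ref{stop-resolutions-prop} (which are phrased in terms of the complementary indices $j_1<\cdots<j_{k-n+2}$ of a product factor $X$) into the interval notation $M\{I,J\}$ on the B-side, and verifying that the connecting morphisms literally correspond under the isomorphism of Lemma \ref{mirror-lem}, including signs and the $\Z$-grading (recall the subtlety flagged in Section \ref{review} that gradings in $\WW(M_{n,k},\Lambda)$ are not additive in the factors). Once the dictionary between $\{T_i\times X\}$ and $\{M\{I,J\}\}$ is set up carefully, the rest is formal: generation on the A-side is Auroux's theorem, generation of the kernels on the B-side is Theorem \ref{B-loc-thm}, and the compatibility of Verdier localization with dg-quotients (using that everything is enhanced) gives the claimed equivalences and their compatibility with the localization functors.
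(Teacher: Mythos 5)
Your proposal is correct and follows essentially the same route as the paper: use Theorem \ref{A-side-morphisms-general-thm}(iv) to normalize the grading so that $\cA^{\circ\circ}$ sits in degree $0$, invoke Lemma \ref{mirror-lem} plus Auroux's generation theorem for \eqref{2stop-A-B-equivalence}, then match the cones from Proposition \ref{stop-resolutions-prop} with the resolutions of the $M\{I,J\}$ and apply Theorems \ref{A-localization-thm} and \ref{B-loc-thm} to get the other two equivalences. The only slip is purely in wording: setting $\deg(x_i)=0$ does not by itself ``force'' $\deg(f_{S,S'})=0$ — one first has to shift the grading of each individual $L_S$ (i.e.\ use the coboundary freedom $d_{S,S'}\mapsto d_{S,S'}+d_{S'}-d_S$ in part (iv)) to make the all-zero assignment the one actually realized by the grading structure, which is exactly what the paper says.
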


\Pf . Note that by Theorem \ref{A-side-morphisms-general-thm}(iv), shifting the graded structures on each $L_S$ if needed,
we can achieve that the algebra $\cA^{\circ\circ}$ is concentrated in degree $0$.
Since the Lagrangians $(L_S)$ generate the wrapped Fukaya category,
the first equivalence is an immediate consequence of the isomorphism of algebras proved in Lemma \ref{mirror-lem}.

The second and third equivalences are obtained from the first by localization, using Theorem \ref{A-localization-thm} 
on the A-side and
Theorem \ref{B-loc-thm} on the B-side. Namely, we observe that under the equivalence
\eqref{2stop-A-B-equivalence}, the resolutions of objects $T_2\times X$ (resp., of $T_1\times X$)
from Proposition \ref{stop-resolutions-prop}
(in the case $n=k-1$) correspond to the complexes defining modules $M\{[i,j],[j+1,m]\}$ (resp., $M\{[j+1,m],[i,j]\}$).
Thus, under \eqref{2stop-A-B-equivalence} the subcategory 
$\DD_2\sub \WW(M_{k-1,k})$ (resp., $\lan\DD_1,\DD_2\ran$) 
corresponds to the subcategory $\ker(r^{\BB^{\circ\circ}}_{\BB^{\circ}})\sub D^b(\BB^{\circ\circ})$
generated by all the modules $M\{[i,j],[j+1,m]\}$ (resp., $\ker(r^{\BB^{\circ\circ}}_R)$ generated by all $M\{I,J\}$).
Hence, using Theorems \ref{A-localization-thm} and \ref{B-loc-thm}, we derive the equivalences
\eqref{1stop-A-B-equivalence} and \eqref{0stop-A-B-equivalence}.
\ed

\begin{rmk}  It follows from Remark \ref{ov-P-res-rem} that
under equivalence \eqref{1stop-A-B-equivalence} of Theorem \ref{main-thm}, 
 the object $\overline{P}_{i}$ defined in Proposition \ref{B-semiorth-dec-prop} (involved in a semi-orthogonal decomposition of $D^b(\mathcal{B}^\circ_{[1,k]})$) 
 corresponds to $L_S = \prod_{a \in S} L_a$ with $S = [0,k] \setminus \{i-1,i\}$.
  \end{rmk}

\begin{rmk} The semiorthogonal decomposition 
$$D^b(\BB^{\circ\circ})=\lan \im(\Phi_k),\ldots,\im(\Phi_2),\im(\Phi_1)\ran$$ 
constructed in Theorem \ref{A-semiorth-dec-thm} has a natural geometric meaning on the A-side.
Namely, let $\CC_i\sub \WW(M_{k-1,k},\La_Z)$ be the subcategory corresponding to $\im(\Phi_i)$.
Then $\CC_1$ is generated by the Lagrangians $L_S$ with $S\sub [1,k]$, and for each $i=2,\ldots,k$,
the subcategory $\CC_i$ is generated by the Lagrangians of the form $T_2\times L_{[0,i-3]\sqcup S'}$ with $S'\sub [i,k]$,
$|S'|=k-i$. The required semiorthogonalities can be easily seen on the A-side. Also, the natural functor 
$$\CC_1\to \WW(M_{k-1,k},\La_1)$$ 
is an equivalence, which corresponds to the equivalence of $\im(\Phi_1)$ with $D^b(\BB^{\circ})$.
On the other hand, for each $i=2,\ldots,k$, there is an equivalence
$$\CC'_i\rTo{\sim}\CC_i: L\mapsto T_2\times L$$
from the subcategory $\CC'_i\sub \WW(M_{k-2,k},\La_Z)$ generated by all $L_{[0,i-3]\sqcup S'}$ with $S'\sub [i,k]$.
Furthermore, since $\End(L_{[0,i-3]})\simeq k[x_1,\ldots,x_{i-2}]$, one can see that 
there is an equivalence
$$\CC''_i\otimes_k k[x_1,\ldots,x_{i-2}]\rTo{\sim} \CC'_i: L\mapsto L_{[0,i-3]}\times L,$$ 
where $\CC''_i\sub \WW(M_{k-i,k},\La_Z)$ is generated by $L_{S'}$ with $S'\sub [i,k]$. Finally,
we have a natural equivalence
$$\CC''_i\rTo{\sim} \WW(M_{k-i,k-i+1},\La_1)$$
induced by filling in the first $i-1$ interior holes and removing one of the stops.
\end{rmk}

\subsection{Homological mirror symmetry for abelian covers}

We are going to deduce from Theorem \ref{main-thm}, a similar equivalence involving wrapped Fukaya categories
of finite abelian covers of $M_{k-1,k}=\PP_{k-1}$.

Namely, as is well known, for $k>2$, there is a natural isomorphism $\pi_1(\PP_{k-1})\simeq \Z^k$,
where as generators we can take products of loops $\ga_i=u_iv_i$, $i=1,\ldots,k$, in $\Sigma$ with $k-2$ points in $\Sigma$.
We denote these generators as $(\hat{\ga}_i)$.

Now let us fix a homomorphism
$$\phi:\pi_1(\PP_{k-1})\simeq \Z^k\to \Gamma,$$
where $\Gamma$ is a finite abelian group.
It is determined by the $k$ elements
$$\phi_i:=\phi(\hat{\ga}_i)\in \Gamma, \ i=1,\ldots,k.$$
Let 
$$\pi:M\to\PP_{k-1}$$
be the finite covering associated with $\phi$, so that $\Gamma$ acts on $M$ as a group of deck transformations.

Then $M$ has the induced symplectic structure, the induced stops $\pi^{-1}(\La)$ and the induced grading structure,
so we can consider the category $\WW(M,\pi^{-1}(\La))$.


Note that each of the Lagrangians $L_S\sub \PP_{k-1}$, for $S=[0,k]\setminus\{i,j\}$ can be lifted to
a connected Lagrangian $\wt{L}_S\sub M$ (since it is simply connected). Let us fix one such lift $\wt{L}_S$ 
for each $S$ (we'll have to adjust it later). Note that all other connected lifts of $L_S$ are of the form $\ga\wt{L}_S$
for some $\gamma\in \Gamma$, where $\Gamma$ acts on $M$ by deck transformations.


As explained in \cite[Sec.\ 8b]{Seidel-quartic}, 
a choice of liftings $(\wt{L}_S)$ defines a $\Gamma$-grading on each space $\hom(L_S,L_{S'})$. 
Namely, a Reeb chord $x:[0,1]\to \PP_{k-1}$ with $x(0)\in L_S$, $x(1)\in L_{S'}$ lifts to a path $\wt{x}:[0,1]\to M$,
such that $\wt{x}(0)\in \wt{L}_S$ and $\wt{x}(1)\in \gamma \wt{L}_{S'}$, for some $\gamma\in \Gamma$, and we set
$\deg^A_\Gamma(x)=\gamma$.

This gives a certain $\Gamma$-grading on the algebra $\cA^{\circ\circ}$.
Note that $\Gamma$-degrees of the elements $x_i\in \Endlc(L_S)$ are just given by
$$\deg^A_\Gamma(x_i)=\phi_i.$$

Note that endomorphism algebra of the objects $(\gamma\wt{L}_S)$ is obtained from the $\Gamma$-grading on
$\cA^{\circ\circ}$ by the following formal construction: it is given by
\begin{equation}\label{A-Gamma-graded-eq}
\bigoplus_{\ga,\ga'\in\Gamma} \cA^{\circ\circ}_{\ga'-\ga},
\end{equation}
with the natural product. 

Let 
$$G:=\Gamma^*$$ be the finite commutative group scheme dual to $\Gamma$ (so if $\Gamma=\prod \Z/d_i\Z$
then $G=\prod \mu_{d_i}$). Then $\Gamma$ can be identified with the set of all algebraic characters of $G$.
Suppose $A$ is an associative algebra with a $\Gamma$-grading. Then we can form a new associative algebra
similarly to \eqref{A-Gamma-graded-eq} by setting
$$A_{\Ga}:=\bigoplus_{\ga,\ga'\in\Gamma} A_{\ga'-\ga}.$$
On the other hand, we can use the $\Gamma$-grading to define an algebraic action of $G$ on $A$ and consider the
category of $A$-modules with compatible $G$-action, or equivalently, $\Gamma$-graded $A$-modules.
For every $\ga\in\Gamma$ we have a natural such module $A\ot\ga$,
and we have an identification
$$A_{\Ga}\simeq \bigoplus_{\ga,\ga'} \Hom_A(A\ot\ga,A\ot\ga')^G.$$
Thus, the category described by the algebra $A_{\Ga}$ is equivalent to the perfect derived category of $A$-modules
with $G$-action (one can also explicitly identify $A_{\Ga}$ with the crossed product ring $A[G]$).

\begin{prop} The collection of Lagrangians $\gamma\tilde{L}_S$ for $\gamma \in
\Gamma$ and $S = [0,k] \setminus \{i,j\}$, $i<j$ generates the wrapped Fukaya
category $\mathcal{W}(M, \pi^{-1}(\Lambda_Z))$.  \end{prop}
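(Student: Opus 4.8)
The plan is to deduce the statement from the generation result for $\WW(\PP_{k-1},\La_Z)$ (Auroux's theorem \cite[Theorem 1]{aurouxicm}, recalled in Section \ref{review}) by transporting generators up the covering $\pi\colon M\to\PP_{k-1}$. Since $\pi$ is finite, it induces a pullback $A_\infty$-functor
$$\pi^*\colon\WW(\PP_{k-1},\La_Z)\to\WW(M,\pi^{-1}(\La_Z)),$$
sending a Lagrangian $L$ to its total preimage $\pi^{-1}(L)$: holomorphic polygons upstairs project to holomorphic polygons downstairs, and conversely a polygon downstairs lifts uniquely once a lift of one boundary puncture is fixed, so the structure maps match (compare \cite[Sec.\ 8b]{Seidel-quartic}); as $\pi^{-1}(\La_Z)$ is by definition the preimage of $\La_Z$, a polygon avoids the stops upstairs iff its projection avoids them downstairs, so $\pi^*$ is indeed defined on the partially wrapped categories, and it is exact. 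Because each $L_S$ is simply connected and $\Gamma$ acts freely on $M$ by deck transformations, the preimage $\pi^{-1}(L_S)$ is a single free $\Gamma$-orbit of embedded Lagrangians, so in $\WW(M,\pi^{-1}(\La_Z))$ one has $\pi^*L_S\simeq\bigoplus_{\gamma\in\Gamma}\gamma\wt{L}_S$, and each summand $\gamma\wt L_S$ is recovered back from this object. Since the $(L_S)$ generate $\WW(\PP_{k-1},\La_Z)$ and $\pi^*$ is triangulated, the split-closed pre-triangulated subcategory generated by $\{\gamma\wt L_S\}$ equals the one generated by the essential image of $\pi^*$.

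It then remains to see that $\pi^*$ is essentially surjective after passing to pre-triangulated, split-closed envelopes. For this I would use the pushforward (transfer) functor $\pi_!\colon\WW(M,\pi^{-1}(\La_Z))\to\WW(\PP_{k-1},\La_Z)$ of the finite cover, together with the projection formula
$$\pi^*\pi_!\,\wt K\simeq\bigoplus_{\gamma\in\Gamma}\gamma\wt K$$
for any connected exact Lagrangian $\wt K\subset M$, again an elementary consequence of the lifting bijection for holomorphic polygons and the deck-group bookkeeping. In particular $\wt K$ is a direct summand of $\pi^*(\pi_!\wt K)$, while $\pi_!\wt K\in\WW(\PP_{k-1},\La_Z)=\lan(L_S)\ran$, so $\wt K\in\lan\gamma\wt L_S\ran$. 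By the conventions adopted in the paper, $\WW(M,\pi^{-1}(\La_Z))$ is the split-closed pre-triangulated envelope of the category of graded exact Lagrangians; running the above over all connected exact $\wt K$ therefore yields the proposition.

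The main point to nail down is the existence of the functors $\pi^*$ and $\pi_!$ in the partially wrapped setting and the projection formula $\pi^*\pi_!\simeq\bigoplus_{\gamma\in\Gamma}\gamma(-)$; all of this is standard for finite covers (the curve counts upstairs and downstairs are identified via the lifting correspondence, and summing over sheets produces exactly the direct sum over $\Gamma$), and in our situation one may invoke the formalism of \cite[Sec.\ 8b]{Seidel-quartic}, with $\pi_!$ realized as the pushforward of (possibly immersed) exact Lagrangian branes, the immersed wrapped Fukaya category being quasi-equivalent to the usual one. A small compatibility to record along the way is that all Lagrangians, stops and grading structures on $M$ are pulled back from $\PP_{k-1}$, so $\pi^*$ and $\pi_!$ respect the $\Z$-gradings and the $\gamma\wt L_S$ are honestly graded objects. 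A purely algebraic alternative would be to first establish $\WW(M,\pi^{-1}(\La_Z))\simeq\Perf(\cA^{\circ\circ}_\Ga)$ with $\cA^{\circ\circ}_\Ga$ the crossed product of \eqref{A-Gamma-graded-eq} and then note that the $\gamma\wt L_S$ are the evident projective generators; but that proves more than is asked and presupposes the very comparison this proposition is a step toward.
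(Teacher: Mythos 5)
Your argument takes a genuinely different route from the paper. The paper establishes generation by factoring the Lefschetz fibration $\varpi_{k-1}\colon\Sym^{k-1}(\Sigma)\to\mathbb{D}$ through the cover to get a Lefschetz fibration $\varpi_{k-1}\circ\pi\colon M\to\mathbb{D}$ whose thimbles are the lifts $\gamma\tilde\Delta_S$, invokes Seidel's thimble-generation theorem, and then removes extra stops via the $\Gamma$-graded lift of the exact triangles (this is the role of Lemma~\ref{nextlemma}). You instead try to transport generation directly across the cover using $\pi^*$, a pushforward $\pi_!$, and a projection formula $\pi^*\pi_!\simeq\bigoplus_{\gamma\in\Gamma}\gamma(-)$.

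The conceptual idea is appealing, but there are gaps you wave at rather than fill, and I do not think they can be dismissed as standard in the partially wrapped setting the paper works in. First, $\pi_!$ of a connected embedded Lagrangian $\tilde K\subset M$ is in general an \emph{immersed} Lagrangian $\pi(\tilde K)\subset\PP_{k-1}$; your argument needs both (a) that such immersed branes define objects of the partially wrapped Fukaya category with stops $\Lambda_Z$, and (b) that they lie in the split-closed pre-triangulated envelope generated by the embedded $L_S$. Neither follows from Auroux's generation theorem as recalled in Section~\ref{review}, which is stated for (products of) embedded arcs; "the immersed wrapped Fukaya category is quasi-equivalent to the usual one" is an assertion, not a cited result, and it is precisely the kind of foundational statement the paper deliberately avoids relying on. Second, the projection formula itself, while geometrically plausible from the lifting correspondence for polygons, needs to be proved as a statement about $A_\infty$-functors in the partially wrapped category (with the correct handling of stops and grading), and the reference to \cite[Sec.~8b]{Seidel-quartic} covers $\pi^*$ for unbranched covers but not a transfer $\pi_!$ with immersed targets. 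The paper's route is heavier on setup (choice of branched covering $\varpi$, Seidel's generation theorem, the stop-removal bookkeeping, and Lemma~\ref{nextlemma} to control how exact triangles lift to $\Gamma$-graded modules) but stays entirely within the embedded, thimble-generated framework where all the tools are already established. If you can supply a clean construction of $\pi_!$ for finite covers in the partially wrapped immersed setting together with the projection formula, your argument would be shorter and more conceptual; as written, those are genuine gaps.
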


\begin{proof} The proof follows along the lines of \cite{aurouxicm} where Auroux proves generation of $\mathcal{W}(\mathcal{P}_{k-1}, \Lambda_Z)$ by the Lagrangians $L_S$. Namely, one first constructs a simple branched $m$-fold covering of the unit disk, a Lefschetz fibration with zero-dimensional fibers, $\varpi : \Sigma \to \mathbb{D}$ for some $m\geq k$ and such that the collection of Lagrangians $L_i$ for $i=0,\ldots, k$ is a subset of a distinguished set of thimbles $\{\Delta_i\}_{i=0}^{m}$ for $\varpi$ and $Z$ is a subset of $\varpi^{-1}(-1)$. Note that, by re-indexing if necessary, we can assume that $L_i = \Delta_i$ for $i=0,\ldots k$. The fact that such a simple branched covering exists is elementary (see \cite[Sec. 3.3.4]{aurouxicm}). A useful observation here is that the number of stops in $\varpi^{-1}(-1) \setminus Z$ is the same as $m-k$. Furthermore, localizing along points in $\varpi^{-1}(-1) \setminus Z$ corresponds to removing extra objects in the collection $\{ \Delta_i \}_{i=0}^m$ to finally reduce it to the collection $\{ L_i \}_{i=0}^k$.

Next, Auroux associates a Lefschetz fibration $\varpi_n : \Sym^n(\Sigma) \to \mathbb{D}$ whose thimbles are given by the collection of products Lagrangians $\Delta_S$ where $S$ runs through subsets of $[0,m]$ of size $n$, and applies a celebrated result of Seidel \cite[Thm. 18.24] {seidelbook} to show that this collection generates the wrapped Fukaya category $\mathcal{W}(\Sym^n(\Sigma), \Lambda_{\varpi^{-1}(-1)})$. Finally, to deduce generation of $\mathcal{W}(\Sym^n(\Sigma), \Lambda_Z)$ by the collection of products of Lagrangian $L_S$ where $S$ runs through subsets of $[0,k]$ of size $n$, Auroux applies the localization functors associated to removing the extra stops in $\Lambda_{\varpi^{-1}(-1)} \setminus \Lambda_Z$. Namely, localizing along the extra stops $\Lambda_{\varpi^{-1}(-1) \setminus Z}$ corresponds to removing extra objects in the collection $\{ \Delta_S \}$ for $S \subset [0,m]$ to finally reduce it to the collection $L_S$ for $S \subset [0,k]$. This is thanks to the existence of basic exact triangles recalled in equation \eqref{exacttri} associated to exact triangles in $\mathcal{W}(\Sigma, Z)$.  

The key observation that allows us to apply the same strategy to generate $\mathcal{W}(M, \pi^{-1}(\Lambda_Z))$ is that the composition 
\[ \varpi_{k-1} \circ \pi : M \to \mathbb{D} \]
is again a Lefschetz fibration and the stops $\pi^{-1}(\Lambda_Z)$ is a subset of $(\varpi_{k-1} \circ \pi)^{-1}(-1)$. Moreover, the thimbles of $\varpi_{k-1} \circ \pi$ are given by lifts $\{ \gamma \tilde{\Delta}_S \}_{\gamma, S}$ of thimbles $\Delta_S$ after fixing connected lifts $\tilde{\Delta}_S$ of $\Delta_S$. Thus, again by Seidel's theorem, the set of these lifted thimbles generate the partially wrapped Fukaya category $\mathcal{W}(M, (\varpi_{k-1} \circ \pi)^{-1}(-1))$. 

Finally, removing stops in $\varpi_{k-1}^{-1}(-1) \setminus Z$ corresponds to localization
of lifts of the extra thimbles $\{\Delta_S\}$ that are not in the collection $\{ L_S \}$. To justify this we apply Lemma \ref{nextlemma} which completes the proof.
\end{proof}

\begin{lem} \label{nextlemma} Suppose that $L_1, L_1', L_1'' \subset \Sigma$ such that $L_1''$ is the arc obtained by sliding $L_1$ along $L_1'$ avoiding a set of stops $Z$ so that we have an exact triangle
\[  L_1 \to L_1' \to L_1'' \to L_1[1] \]
in $\mathcal{W}(\Sigma, Z)$. Let $L_2, \ldots, L_{k-1}$ be disjoint arcs in $\Sigma$ 
and let $L_S = L_1 \times L_2 \times \ldots L_{k-1}$, $L'_S = L_1' \times L_2 \times \ldots L_{k-1}$ and $L''_S = L_1'' \times L_2 \times \ldots L_{k-1}$ be Lagrangians in $\mathcal{P}_{k-1}$ so that we have the corresponding exact triangle
\begin{equation}\label{LS-exact-triangle-lift-eq} 
L_S \to L_S' \to L_S'' \to L_S[1] 
\end{equation}
in $\mathcal{W}(\Sigma, \Lambda_Z)$.
Let $\pi : M \to \mathcal{P}_{k-1}$ be a covering and let $\tilde{L}_S$, $\tilde{L}'_S$ and $\tilde{L}''_S$ be connected lifts of these Lagrangians to $M$. Then, in $\mathcal{W}(M, \pi^{-1}(\Lambda_Z))$, the Lagrangian $\tilde{L}_S$ is generated by the collection $\{ \gamma \tilde{L}'_S \}_{\gamma \in\Gamma}  \cup \{ \gamma \tilde{L}''_S \}_{\gamma \in \Gamma}$.  
\end{lem}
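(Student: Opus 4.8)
The plan is to lift the exact triangle \eqref{LS-exact-triangle-lift-eq} to the cover $M$ directly. First I would observe that the arc-sliding operation that produces $L_1''$ from $L_1$ and $L_1'$ is a purely local move in $\Sigma$, and hence lifts to the cover: choosing the connected lift $\tilde{L}_1$ (the first component of $\tilde{L}_S$) and the connected lift $\tilde{L}_1'$ of $L_1'$, the sliding of $\tilde{L}_1$ along $\tilde{L}_1'$ produces an arc in the cover of $\Sigma$ underlying $M$; since $L_1''$ is connected and simply connected it has a unique connected lift up to deck transformation, so sliding yields $\gamma_0 \tilde{L}_1''$ for some $\gamma_0 \in \Gamma$. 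Thus, applying the basic exact triangle \eqref{exacttri} on the cover (which is again a symmetric product of a punctured surface, with the product Lagrangians $\tilde{L}_S$, $\tilde{L}_S'$ built from these lifted arcs), we get an exact triangle
\[
\tilde{L}_S \to \tilde{L}_S' \to \gamma_0 \tilde{L}_S'' \to \tilde{L}_S[1]
\]
in $\mathcal{W}(M, \pi^{-1}(\Lambda_Z))$. This exhibits $\tilde{L}_S$ as the shift of the cone of $\tilde{L}_S' \to \gamma_0 \tilde{L}_S''$, so $\tilde{L}_S$ lies in the triangulated subcategory generated by $\tilde{L}_S'$ and $\gamma_0 \tilde{L}_S''$, which is contained in the subcategory generated by $\{\gamma \tilde{L}_S'\}_{\gamma} \cup \{\gamma \tilde{L}_S''\}_{\gamma}$.

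The key technical points to verify along the way are: (a) the composition $M \to \mathcal{P}_{k-1}$ restricted to the relevant neighbourhood in $\Sigma$ is an honest covering of surfaces so the arc slide lifts, and that the exceptional divisor stops $\pi^{-1}(\Lambda_Z)$ are exactly the preimages, so that the move avoids them; (b) the morphism $\tilde{L}_S \to \tilde{L}_S'$ obtained from the lift is the one covering the morphism $u \otimes \id$ from \eqref{exacttri}, so that the lifted triangle really is an exact triangle and not merely a candidate; this follows because the Floer-theoretic exact triangle of \cite[Lemma 5.2]{aurouxggt} is constructed from a local model near the sliding region, which pulls back faithfully under the covering. Concretely, one can argue that the pre-image under $\pi$ of a small Weinstein neighbourhood of the relevant portion of $\mathcal{P}_{k-1}$ is a disjoint union of copies indexed by $\Gamma/\mathrm{(stabilizer)}$, and the triangle of \cite{aurouxggt} occurs inside the single copy containing $\tilde{L}_S$.

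The main obstacle I expect is bookkeeping the deck-transformation ambiguity: a priori the lift of $L_1''$ could be $\gamma_0 \tilde{L}_1''$ rather than $\tilde{L}_1''$ on the nose, and more importantly one must make sure that after fixing lifts $\tilde{L}_S$, $\tilde{L}_S'$ the third vertex of the triangle is \emph{some} $\Gamma$-translate of $\tilde{L}_S''$ rather than an object outside the collection $\{\gamma \tilde{L}_S''\}$. But since $L_1''$ is connected and simply connected, all its lifts are deck translates of one another, so this ambiguity only moves us within the stated generating collection and causes no harm; this is precisely why the lemma is phrased in terms of the full $\Gamma$-orbits rather than individual lifts. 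A secondary subtlety is checking that grading structures are compatible — the induced grading on $M$ restricts the exact triangle \eqref{exacttri} to a graded exact triangle — but this is automatic since the grading structure on $M$ is pulled back from $\mathcal{P}_{k-1}$ and the exact triangle on the base is graded.
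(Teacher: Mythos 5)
The paper's proof is algebraic, while yours is geometric, so the two are genuinely different routes. The paper observes that the full subcategory of $\mathcal{W}(M,\pi^{-1}(\Lambda_Z))$ generated by all the translates $\gamma\tilde{L}_T$ is (as explained just before the lemma) equivalent to the perfect derived category of $\Gamma$-graded modules over the endomorphism algebra of the $L_T$'s downstairs; since all three morphisms in the triangle \eqref{LS-exact-triangle-lift-eq} are $\Gamma$-homogeneous (they are single Reeb chords), the triangle lifts verbatim to a triangle of $\Gamma$-graded modules $P_S\to P_{S'}\otimes\gamma'\to P_{S''}\otimes\gamma''\to P_S[1]$, and that is the whole proof. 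Your approach instead tries to lift the \emph{geometry} — the arcs and the arc-slide — to the covering space and then apply the surgery exact triangle \eqref{exacttri} directly upstairs.

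The gap in your argument is the premise that $M$ ``is again a symmetric product of a punctured surface, with the product Lagrangians $\tilde{L}_S$, $\tilde{L}'_S$ built from these lifted arcs.'' This is false in general: $\pi\colon M\to\mathcal{P}_{k-1}$ is a covering of $\Sym^{k-1}(\Sigma)$ classified by a homomorphism $\pi_1(\Sym^{k-1}\Sigma)\simeq\Z^k\to\Gamma$, and such a cover is typically not itself a symmetric product (nor is it of the form $\Sym^{k-1}$ of a cover of $\Sigma$, since $\Sym^{k-1}(\tilde\Sigma)\to\Sym^{k-1}(\Sigma)$ is ramified along the diagonal). Consequently there is no ``first component $\tilde L_1$'' of $\tilde{L}_S$, no lifted arc-slide inside a cover of $\Sigma$, and the exact triangle \eqref{exacttri} of \cite[Lemma 5.2]{aurouxggt} — which is stated and proved for product Lagrangians inside symmetric products — cannot be invoked on $M$ as such. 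You gesture at salvaging this by saying the surgery triangle ``pulls back faithfully under the covering'' because it is built from a local model; this is plausible and is the geometric content underlying the algebraic argument, but as written it is an assertion, not a proof: one would need an argument that the cone of $\tilde{L}_S\to\gamma'\tilde{L}'_S$ in $\mathcal{W}(M,\pi^{-1}(\Lambda_Z))$ is quasi-isomorphic to a translate of $\tilde{L}''_S$, for which the cited reference does not apply. The paper's passage to $\Gamma$-graded modules is precisely the device that converts this geometric plausibility into a clean algebraic fact (exact triangles with homogeneous arrows lift to the graded category) without ever having to discuss the geometry of $M$; that is what the algebraic route buys you here. Your deck-transformation bookkeeping (that the ambiguity in the lift of $L''_S$ only moves you within the $\Gamma$-orbit, hence is harmless) is correct and is the same observation that appears in the paper as the freedom to twist by $\gamma''$.
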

\begin{proof}
As we explained before, we can describe the subcategory of $\WW(M,\pi^{-1}(\Lambda_Z))$, generated by 
$(\gamma\wt{L}_S)$, by the algebra \eqref{A-Gamma-graded-eq}, and hence, identify it with
the perfect derived category of $\Gamma$-graded $\cA^{\circ\circ}$-modules, such that $\gamma\wt{L}_S$ corresponds
to $P_S\ot\gamma$, for the natural projective  $\cA^{\circ\circ}$-modules $P_S$

Now we observe that all the morphisms in the exact triangle \eqref{LS-exact-triangle-lift-eq} are homogeneous with respect to the 
$\Gamma$-grading (i.e., they all have some degree in $\Gamma$). This implies that this exact triangle can be
viewed as an exact triangle of $\Gamma$-graded $\cA^{\circ\circ}$-modules of the form
$$P_S\to P_{S'}\ot \gamma'\to P_{S''}\ot \gamma''\to P_S[1].$$
The corresponding exact triangle in $\WW(M,\pi^{-1}(\Lambda_Z))$ proves our assertion.
\end{proof} 

On the B-side, we can consider the homomorphism
$$G\to \G_m^k,$$ 
dual to the homomorphism $\Z^k\to \Gamma$.
The group $\G_m^k$ acts on the ring $R=R_{[1,k]}=\k[x_1,\ldots,x_k]/(x_1\ldots x_k)$ by rescaling the coordinates,
and on the $R$-modules $R/(x_I)$, so we get the induced action on the algebras $\BB^\circ=\BB^\circ_{[1,k]}$ and 
$\BB^{\circ\circ}=\BB^{\circ\circ}_{[1,k]}$.
Via the homomorphism $G\to\G_m^k$ we get algebraic $G$-actions on $R$, $\BB^\circ$ and $\BB^{\circ\circ}$.

Let $\Perf_G(\BB)$ (resp., $D^b_G(R)$)
denote the perfect derived category of $G$-equivariant $\BB$-modules (resp., bounded derived catogory of
finitely generated modules $G$-equivariant $R$-modules). 

\begin{thm}\label{abelian-cover-thm} 
We have equivalences of enhanced triangulated categories over $\k$,
\begin{equation}\label{2stop-Gamma-A-B-equivalence}
\WW(M,\pi^{-1}(\La))\simeq \Perf_G(\BB^{\circ\circ}_{[1,k]}),
\end{equation}
\begin{equation}\label{1stop-Gamma-A-B-equivalence}
\WW(M,\pi^{-1}(\La_{1}))\simeq \Perf_G(\BB^{\circ}_{[1,k]}).
\end{equation}
Assume that $\k$ is a regular ring.
Then we have an equivalence
\begin{equation}\label{0stop-Gamma-A-B-equivalence}
\WW(M)\simeq D^b_G(R_{[1,k]}),
\end{equation}
where the Fukaya categories are equipped with the $\Z$-grading induced by the grading structure on $\PP_{k-1}$ considered
in Theorem \ref{main-thm}.
\end{thm}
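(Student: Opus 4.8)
The plan is to deduce Theorem \ref{abelian-cover-thm} from Theorem \ref{main-thm} by running the ``equivariant upgrade'' of the same argument. The starting point is the $\Gamma$-grading on the algebra $\cA^{\circ\circ}$ defined via a choice of connected lifts $\wt{L}_S\sub M$, together with the observation already recorded in the excerpt that, with respect to this grading, $\deg^A_\Gamma(x_i)=\phi_i$ and the generators $f_{S,S'}$ are homogeneous. First I would verify that, after adjusting the lifts $\wt{L}_S$ (equivalently, twisting the $\Gamma$-grading by a coboundary $d_{S,S'}\mapsto d_{S,S'}+d_{S'}-d_S$, which corresponds to replacing $\wt{L}_S$ by a $\Gamma$-translate), the $\Gamma$-graded algebra $\cA^{\circ\circ}$ matches the $\G_m^k$-action (hence the induced $G$-action via $G\to\G_m^k$) on $\BB^{\circ\circ}_{[1,k]}$ under the isomorphism of Lemma \ref{mirror-lem}: concretely, $\Hom_{\BB^{\circ\circ}}(P_I,P_J)\simeq x_{I\setminus J}\cdot R/(x_{I\cap J})$ carries the $\G_m^k$-weight of $x_{I\setminus J}$ times the weight on $R/(x_{I\cap J})$, and one checks this equals the $\Gamma$-degree $\deg^A_\Gamma(f_{S,S'})=\sum_{i\in I\setminus J}\phi_i$ prescribed by the lifts, with the matching compositions following from \eqref{comb-triple-eq} exactly as in the proof of Lemma \ref{mirror-lem}. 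Since the grading structure $\eta$ on $\PP_{k-1}$ with $\deg(x_i)=0$ lifts to $M$ and makes $\cA^{\circ\circ}$ concentrated in degree $0$ as a $\Z$-graded algebra, the $\Gamma$-grading and the $\Z$-grading are compatible.

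The second step is the formal comparison: by the Proposition preceding the theorem, $\WW(M,\pi^{-1}(\La))$ is generated by the collection $\{\gamma\wt{L}_S\}_{\gamma\in\Gamma,\,S}$, and by Auroux's formula for endomorphism algebras of products together with the discussion around \eqref{A-Gamma-graded-eq}, the endomorphism algebra of this collection is precisely $\cA^{\circ\circ}_\Gamma=\bigoplus_{\ga,\ga'}\cA^{\circ\circ}_{\ga'-\ga}$. On the B-side, the discussion in the excerpt identifies the category of $\Gamma$-graded (equivalently, $G$-equivariant) $\BB^{\circ\circ}$-modules with the perfect derived category of the algebra $(\BB^{\circ\circ})_\Gamma$, with $P_I\ot\gamma$ corresponding to $\gamma\wt{L}_S$. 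Combining this with Step 1 and the fact (Theorem \ref{main-thm}, via Theorem \ref{A-side-morphisms-general-thm}(iv)) that the full dg-algebra is formal—formality is inherited by $\cA^{\circ\circ}_\Gamma$ since it is built functorially from the formal $\Gamma$-graded dg-algebra—gives the equivalence \eqref{2stop-Gamma-A-B-equivalence}.

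For the equivalences \eqref{1stop-Gamma-A-B-equivalence} and \eqref{0stop-Gamma-A-B-equivalence} I would again localize. On the A-side, the subcategory of $\WW(M,\pi^{-1}(\La))$ generated by lifts of the objects $T_2\times X$ (resp.\ $T_1\times X$, $T_2\times X$) is the preimage under $\pi$ of $\DD_2$ (resp.\ $\langle\DD_1,\DD_2\rangle$), and by the stop-removal theorem \cite[Theorem 1.16]{GPS2} quotienting by it yields $\WW(M,\pi^{-1}(\La_1))$ (resp.\ $\WW(M)$)—the resolutions of Proposition \ref{stop-resolutions-prop} lift to $M$ since all maps in them are $\Gamma$-homogeneous, exactly as in Lemma \ref{nextlemma}. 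On the B-side, the $G$-action preserves the subcategories $\ker(r^{\BB^{\circ\circ}}_{\BB^{\circ}})$ and $\ker(r^{\BB^{\circ\circ}}_R)$ of Theorem \ref{B-loc-thm} (these are generated by the $G$-equivariant modules $M\{I,J\}$), so the localizations descend to the equivariant categories, giving $\Perf_G(\BB^{\circ})$ and, when $\k$ is regular, $D^b_G(R_{[1,k]})$; regularity of $\k$ is used exactly as in Theorem \ref{B-loc-thm}(ii) for the identification with the full derived category of finitely generated modules, and passing to $G$-equivariant objects is harmless since $G$ is a finite (linearly reductive when $\k$ is a field, but here just flat finite) group scheme. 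The main obstacle I anticipate is Step 1: pinning down the precise $\Gamma$-grading induced by the geometric lifts and checking it agrees on the nose with the weights of the $\G_m^k$-action on $\BB^{\circ\circ}$, including the sign/orientation conventions in $\deg^A_\Gamma$ and the bookkeeping of which coboundary twist of the lifts is needed; once that dictionary is in place the rest is a formal transcription of the non-equivariant proof.
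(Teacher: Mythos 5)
Your proposal follows essentially the same route as the paper's proof: lift Lagrangians to $M$, regard the $G$-action on $\BB^{\circ\circ}$ as a $\Gamma$-grading and match it (after adjusting the lifts by a coboundary) with the geometric $\Gamma$-grading on $\cA^{\circ\circ}$, pass to the crossed-product algebra $\cA^{\circ\circ}_\Gamma$ to get \eqref{2stop-Gamma-A-B-equivalence}, and then localize using equivariant lifts of the modules $M\{I,J\}$ and the $\Gamma$-homogeneity of the resolutions of $T_i\times X$. The localization step in particular is carried out the same way as in the paper.

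The one place where your outline leaves a genuine gap is precisely the ``Step 1'' you flag as the main obstacle. You write that ``one checks'' that the lift-induced grading equals $\deg^A_\Gamma(f_{S,S'})=\sum_{i\in I\setminus J}\phi_i$; but nothing forces this to hold for an arbitrary choice of connected lifts $\wt{L}_S$, and there is no direct formula available for $\deg^A_\Gamma(f_{S,S'})$ in terms of the $\phi_i$'s without first knowing that the lifts have been chosen compatibly. The paper dissolves this by invoking the abstract uniqueness statement of Theorem \ref{A-side-morphisms-general-thm}(iv) with $D=\Gamma$: since both $\deg^A_\Gamma$ and the grading $\deg^B_\Gamma$ pulled back from $\BB^{\circ\circ}$ via Lemma \ref{mirror-lem} assign $\deg(x_i)=\phi_i$, they must differ by a coboundary $d_{S,S'}\mapsto d_{S,S'}+\gamma_{S'}-\gamma_S$; this is where the simple-connectedness of the simplicial complex $X(n,k)$ proved in \ref{A-side-morphisms-general-thm}(iv) is used, and it is exactly what guarantees the existence of the adjustment $\wt{L}_S\mapsto\gamma_S\wt{L}_S$ you need. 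So the missing ingredient is not a new idea but an appeal to a result you already have; replacing the proposed direct verification by this citation closes the gap and makes the rest of your argument go through verbatim.
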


\begin{proof} 
The action of $G$ on $\BB^{\circ\circ}$ can also be viewed as a $\Gamma$-grading on it, $\deg^B_\Gamma$,
which satisfies 
$$\deg^B_\Gamma(x_i)=\phi_i.$$
By the isomorphism of Lemma \ref{mirror-lem}, it can be viewed as another $\Gamma$-grading on $\cA^{\circ\circ}$.
Applying Theorem \ref{A-side-morphisms-general-thm}(iv), we deduce the existence of $(\ga_S)$ such that
$$\deg^B_\Gamma(f_{S,S'})=\deg^A_\Gamma(f_{S,S'})+\ga_{S'}-\ga_S.$$
This means that by adjusting the choices of lifts $\wt{L}_S$ (by the action of $\ga_S$),
we can achieve that the gradings $\deg^B_\Gamma$ and $\deg^A_\Gamma$ are the same.

Now as we have observed before, the category $\Perf_G(\BB^{\circ\circ}_{[1,k]})$ 
is equivalent to the perfect derived category of the algebra
$$\BB^{\circ\circ}_{\Ga}=\bigoplus_{\ga,\ga'\in\Gamma} \BB^{\circ\circ}_{\ga'-\ga}.$$
Since the isomorphism between $\cA^{\circ\circ}$ and $\BB^{\circ\circ}$ is compatible with the $\Gamma$-gradings, the latter algebra is isomorphic to
\eqref{A-Gamma-graded-eq}, which implies the equivalence \eqref{2stop-Gamma-A-B-equivalence}.

  To deduce equivalences \eqref{1stop-Gamma-A-B-equivalence} and \eqref{0stop-Gamma-A-B-equivalence}
we first observe that an analog of Theorem \ref{B-loc-thm} holds for equivariant categories, where the modules
$M\{I,J\}$ are equipped with natural $G$-equivariant structure, and to generate the kernels of 
$r^{\BB^{\circ\circ}}_{\BB^{\circ}}$ and $r^{\BB^{\circ\circ}}_R$ we use appropriate modules twisted by all possible elements
of $\Gamma$. Thus, we derive these equivalences as in Theorem \ref{main-thm} using localization on both sides.
\end{proof}

As an application we deduce a version of homological mirror symmetry for invertible polynomials (see the recent paper \cite{LU} for background on this topic). Let 
\begin{align}
 \w = \sum_{i=1}^k \prod_{j=1}^k x_j^{a_{ij}}
\end{align}
be an \emph{invertible} polynomial, which is a weighted homogeneous polynomial that has an isolated critical point at 
the origin and is described by some integer matrix 
$
 A = (a_{ij})_{i, j=1}^k
$
with non-zero determinant.

Let 
$$M_{\w}:= \{ (x_1,\ldots,x_k)\in (\C^*)^{\times k} \ |\ \w(x_1,\ldots, x_k) = 1 \}$$ 
be the ``punctured'' Milnor fiber. We have a covering map 
\[ \pi : M_{\w} \to \mathcal{P}_{k-1} \]
given by $(x_1,x_2, \ldots, x_k) \to ( \prod_{j=1}^k x_j^{a_{1j}}, \prod_{j=1}^k x_j^{a_{2j}}, \ldots, \prod_{j=1}^k x_j^{a_{kj}})$ where we view $\mathcal{P}_{k-1}$ as a hypersurface in $(\C^*)^{\times k}$ via the identification
\[ \mathcal{P}_{k-1} = \{ (x_1,\ldots, x_k) \in (\C^*)^{\times k} : x_1 + x_2 + \ldots + x_k = 1 \}. \]
The group of deck transformations of this covering map is
\[ \Gamma = \{ (t_1,t_2,\ldots, t_k) \in \mathbb{G}_m^{\times k} : \forall i, t_1^{a_{i1}} t_2^{a_{i2}} \ldots t_k^{a_{ik}} = 1  \}, \]
which is exactly the group of diagonal symmetries of $\w$.

Let $G= Hom( \Gamma, \mathbb{G}_m)$ be the dual group. As an immediate corollary to Theorem \ref{abelian-cover-thm}, we get the following homological mirror symmetry statement for $M_{\w}$.

\begin{cor} Assume $\k$ is regular. Then $\WW(M_\w)\simeq D^b_G(R_{[1,k]})$. 
\end{cor}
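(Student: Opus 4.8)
The plan is to recognize the punctured Milnor fiber $M_\w$ as one of the finite abelian covers of $\PP_{k-1}$ treated in Theorem \ref{abelian-cover-thm}, and then read off the conclusion from the equivalence \eqref{0stop-Gamma-A-B-equivalence}. We work with the realization of $\PP_{k-1}=M_{k-1,k}$ as the hyperplane complement $\{(x_1,\dots,x_k)\in(\C^*)^k : x_1+\dots+x_k=1\}$ (which carries the same wrapped Fukaya category as the symmetric-product model, as recalled in Section \ref{review}), with $\pi_1(\PP_{k-1})\simeq\Z^k$ generated by the meridians $\hat\gamma_i$ of the divisors $\{x_i=0\}$.

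\textbf{Step 1: $\pi$ is the finite abelian covering with deck group $\Gamma$.} Observe that $\pi$ is the restriction to $M_\w$ of the monomial map $\rho\colon(\C^*)^k\to(\C^*)^k$, $x\mapsto x^A=(\prod_j x_j^{a_{1j}},\dots,\prod_j x_j^{a_{kj}})$. Since $\det A\neq 0$, $\rho$ is a finite holomorphic covering of tori whose deck group is $\{t\in\G_m^k : t^A=1\}$, i.e.\ the group $\Gamma$ of diagonal symmetries of $\w$; moreover $\rho^{-1}(\PP_{k-1})=\{x\in(\C^*)^k : \w(x)=1\}=M_\w$, so $\pi$ is itself a finite covering with deck group $\Gamma$, hence the covering associated with a homomorphism $\phi\colon\pi_1(\PP_{k-1})\to\Gamma$. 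Being a finite holomorphic cover of a Stein manifold, $M_\w$ inherits a Weinstein structure, the stops $\pi^{-1}(\Lambda)$, and --- since $c_1(M_\w)=\pi^*c_1(\PP_{k-1})=0$ --- the pulled-back grading structure; thus $\WW(M_\w)$ is precisely the category $\WW(M)$ of Theorem \ref{abelian-cover-thm}.

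\textbf{Step 2: identifying $\phi$ and concluding.} Lifting a small meridian of $\{x_i=0\}$ through $\pi$ and inverting $\rho$ locally by means of $A^{-1}$, one finds $\phi(\hat\gamma_i)=\gamma^{(i)}\in\Gamma\subset\G_m^k$ with $\gamma^{(i)}_j=\exp\!\bigl(2\pi\sqrt{-1}\,(A^{-1})_{ji}\bigr)$. Now apply Theorem \ref{abelian-cover-thm} with $\Gamma$ the group of diagonal symmetries of $\w$ and this $\phi$: its dual $G=\Gamma^*=\Hom(\Gamma,\G_m)$ is exactly the group appearing in the corollary, and the $G$-action on $R_{[1,k]}$ produced by the theorem --- obtained from the homomorphism $G\to\G_m^k$ dual to $\phi$ together with the rescaling action of $\G_m^k$ on the coordinates $x_i$ --- is the one implicit in the statement. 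Hence, for $\k$ regular, \eqref{0stop-Gamma-A-B-equivalence} yields $\WW(M_\w)\simeq D^b_G(R_{[1,k]})$.

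The only input that is not pure bookkeeping, and the step I would treat most carefully, is the claim in Step 1 that the wrapped Fukaya category of the affine variety $M_\w$ may be computed from the Weinstein structure pulled back along $\pi$: this uses that $\WW$ of a smooth affine variety is independent of the chosen compatible K\"ahler form, that a finite holomorphic cover of a Stein domain is Stein, and that $M_\w=\pi^{-1}(\PP_{k-1})$ as a complex manifold; the stops and grading transport along $\pi$ for the same reasons. Everything else is an elementary computation with $\pi_1$ and Cartier duality.
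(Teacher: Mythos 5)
Your proposal is correct and follows essentially the same route as the paper: the authors treat the corollary as an immediate consequence of Theorem \ref{abelian-cover-thm}, whereas you make explicit the bookkeeping they leave implicit (that $\pi=\rho|_{M_\w}$ with $\rho(x)=x^A$ is a finite covering with deck group $\Gamma$, the explicit monodromy $\phi(\hat\gamma_i)_j=\exp(2\pi\sqrt{-1}\,(A^{-1})_{ji})$, and the transport of the Weinstein/grading data along $\pi$). The caveat you flag in your closing paragraph is a reasonable thing to be careful about, but it is the same implicit input the paper relies on, so there is no genuine gap.
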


\begin{rmk}\label{1dim-coverings-rem}
There is also an analog of Theorem \ref{abelian-cover-thm} for arbitrary finite
connected coverings of the $3$-punctured sphere $\Sigma=\PP_1$.
Namely, we have an identification of $\pi_1(\Sigma)$ with the free group with two generators $\ga_1$, $\ga_2$, so
such a covering $\pi:M\to \Sigma$ corresponds to a finite set $S$ with a pair of permutations $\si_1,\si_2\in\Aut(S)$,
generating a transitive action of $\pi_1(\Sigma)$ on $S$ (so that $\ga_i$ acts as $\si_i$). 
Choosing a lifting of the Lagrangians $L_0,L_1,L_2$
to the universal covering of $\Sigma$, we get as above, a $\pi_1(\Sigma)$-grading on the endomorphism algebra
$A$ of $(L_0,L_1,L_2)$ in the partially wrapped Fukaya category $\WW(\Sigma,Z,\eta)$ where $Z=q_1\cup q_2$
(see Figure \ref{pop}).
Now a similar reasoning to Theorem \ref{abelian-cover-thm} shows that $\WW(M,\pi^{-1}(Z),\eta)$ is equivalent
to the perfect derived category of $S$-graded modules over $A$ (where the $S$-grading on a module is compatible
with the $\pi_1(\Sigma)$-grading on $A$). Furthermore, we can choose liftings of the Lagrangians in such a way 
that the $\pi_1(\Sigma)$-grading on $A$, which is identified with the Auslander order over $\k[x_1,x_2]/(x_1x_2)$,
is induced by the grading $\deg(x_i)=\ga_i$, $i=1,2$. Localizing we get a similar connection between the fully wrapped
Fukaya category of $M$ and $S$-graded modules over $\k[x_1,x_2]/(x_1x_2)$.

For example, taking $\Ga=\Z/d$ (for $d>0$) and the homomorphism $\ga_1\mapsto 1\mod (d)$, $\ga_2\mapsto -1\mod(d)$, corresponds
to the $d:1$ covering $\wt{\Sigma}\to \Sigma$, that can be identified with the $(d+2)$-punctured sphere. Namely, if we realize $\Sigma$ with $\P^1\setminus \{0,1,\infty\}$
in such a way that the puncture with stops corresponds to $1$, then we can take the covering $\pi:\P^1\to \P^1:z\mapsto z^d$, ramified at $0$ and $\infty$, and
identify $\wt{\Sigma}$ with 
$$\P^1\setminus \pi^{-1}(\{0,1,\infty\})=\P^1\setminus \{0,\infty,(\exp(2\pi i m/d))_{m=0,\ldots,d-1}\}.$$
The above equivalence relates the partially wrapped Fukaya category of $\wt{\Sigma}$ (with respect to the preimages of two stops) with modules over the
Auslander order on the corresponding stacky curve, the quotient of $\Spec(\k[x_1,x_2]/(x_1x_2))$ by the action of $\mu_d$, the group of $d$th roots of unity,
where $\zeta$ acts on $(x_1,x_2)$ by $(\zeta x_1,\zeta^{-1}x_2)$. This equivalence is a particular case of \cite[Thm.\ A]{LP}.
\end{rmk}


\begin{thebibliography}{99}

\bibitem{abouzseidel} Abouzaid, M. , Seidel, P., {\it An open string analogue of Viterbo functoriality}  Geom. Topol. 14 (2010), no. 2, 627--718. 

\bibitem{aurouxggt} Auroux, D., {\it Fukaya categories of symmetric products and bordered Heegaard-Floer homology}, J. G\"okova Geom. Topol. GGT 4 (2010), 1--54. 

\bibitem{aurouxicm} Auroux, D., {\it Fukaya categories and bordered Heegaard-Floer homology}, Proceedings of the International Congress of Mathematicians. Volume II, 917--941, Hindustan Book Agency, New Delhi, 2010. 

\bibitem{aurouxspec} Auroux, D., {\it Speculations on homological mirror symmetry for hypersurfaces in $\mathbb{C}^\times$ }. Surveys in differential geometry 2017. Celebrating the 50th anniversary of the Journal of Differential Geometry, 1--47, Surv. Differ. Geom., 22, Int. Press, Somerville, MA, 2018. 

\bibitem{BLL} A.~Bondal, M.~Larsen, V.~Lunts, {\it
Grothendieck ring of pretriangulated categories}, Int. Math. Res. Not. 29 (2004), 1461--1495.

\bibitem{BurbanDrozd} Burban, I., Drozd, Y., {\it Tilting on non-commutative rational projective curves},
Math. Ann. 351 (2011), no. 3, 665--709.

\bibitem{DFI} Dao, H., Faber, E., Ingalls, C., {\it Noncommutative (crepant) desingularizations and the global spectrum
of commutative rings},  Algebr. Represent. Theory 18 (2015), no. 3, 633--664.

\bibitem{GamNad} Gammage B., Nadler D., {\it Mirror symmetry for honeycombs }. Trans. Amer. Math. Soc. 373 (2020), no. 1, 71--107. 

\bibitem{ganatra} Ganatra S. {\it Symplectic  cohomology  and  duality  for  the  wrapped  Fukaya  category}, Thesis (Ph.D.)--Massachusetts Institute of Technology. 2012.   Preprint arXiv:1304.7312v1.

\bibitem{GPS1}  Ganatra, S., Pardon, J., Shende V., {\it Covariantly functorial wrapped Floer theory on Liouville sectors}, to appear in Publ. Math. IHES (2019). Preprint,  arXiv:1706.03152. 

\bibitem{GPS2} Ganatra S., Pardon J., Shende V., {\it Structural results in wrapped Floer theory}. Preprint, arXiv:1809.0342. 

\bibitem{GPS3} Ganatra S., Pardon J., Shende V., {\it  Microlocal Morse theory of wrapped Fukaya categories}. Preprint, arXiv:1809.08807. 
 
\bibitem{HKK} Haiden F., Katzarkov L., Kontsevich M., {\it Flat surfaces and stability
    structures}, Publ. IHES 126 (2017), 247--318.

\bibitem{Isik} Isik M.~U., {\it Equivalence of the derived category of a variety with a singularity category}, 
IMRN 2013, no. 12, 2787--2808.

\bibitem{Kuz-Lunts} Kuznetsov, A., Lunts, V., {\it Categorical resolutions of irrational singularities}, IMRN 13 (2015), 4536--4625.

\bibitem{LPer} Lekili, Y., Perutz, T., {\it Fukaya categories of the torus and Dehn surgery}, Proc. Natl. Acad. Sci. USA 108 (2011), no. 20, 8106--8113. 

\bibitem{LP} Lekili, Y., Polishchuk, A., {\it Auslander orders over nodal stacky curves and partially wrapped Fukaya categories}, Journal of Topology 11 (2018), no. 3, 615--644. 

\bibitem{LPgentle} Lekili Y., Polishchuk A., {\it Derived equivalences of gentle algebras via Fukaya categories}, to appear in Math. Ann. (2020). Preprint, arXiv:1801.06370.

\bibitem{LU} Lekili Y., Ueda K., {\it Homological mirror symmetry for K3 surfaces via moduli of $A_\infty$-structures}. Preprint, arXiv:1806.04345. 

\bibitem{LOT} Lipshitz, R., Oszv\'ath P., Thurston D., {\it Bordered Heegaard-Floer homology}. Mem. Amer. Math. Soc. 254 (2018), no. 1216, viii+279 pp.

\bibitem{Lunts} Lunts, V., {\it Categorical resolution of singularities}, J. Algebra 323 (2010), no. 10, 2977--3003.

\bibitem{mikhalkin} Mikhalkin, G., {\it Decomposition into pairs-of-pants for complex algebraic hypersurfaces.}
Topology 43 (2004), no. 5, 1035--1065. 

\bibitem{miyachi} Miyachi, J.~I., {\it Localization of Triangulated Categories and Derived Categories}, J. Algebra 141 (1991),
463--483.

\bibitem{nadler} Nadler, D., {\it Wrapped microlocal sheaves on pairs of pants}, Preprint arXiv:1604.00114

\bibitem{Orlov2004} Orlov, D.,
{\it Triangulated categories of singularities and D-branes in Landau-Ginzburg models},
Proc. Steklov Inst. Math. 2004, no. 3 (246), 227--248.

\bibitem{OS} Ozsv\'ath, P., Szab\'o, Z., {\it Kauffman states, bordered algebras and a bigraded knot invariant}, preprint arXiv:1603.06559.

\bibitem{perutz} Perutz, T., {\it Hamiltonian handleslides for Heegaard Floer homology} Proceedings of G\"okova Geometry-Topology Conference 2007, 15--35, G\"okova Geometry/Topology Conference (GGT), G\"okova, 2008. 



\bibitem{seidelgraded} Seidel, P., {\it Graded Lagrangian submanifolds}, Bull. Soc. Math. France 128 (2000), no. 1, 103--149.

\bibitem{Seidel-quartic} Seidel, P., {\it Homological mirror symmetry for the quartic surface}, Mem. AMS 236 (2015), no. 1116.

\bibitem{seidelbook}
Seidel, P.,
\newblock {\em Fukaya categories and {P}icard-{L}efschetz theory}.
\newblock Zurich Lectures in Advanced Mathematics. European Mathematical
  Society (EMS), Z\"urich, 2008.

\bibitem{sheridan} Sheridan, N., {\it On the homological mirror symmetry conjecture for pairs of pants}, J. Differential Geom. 89 (2011), no. 2, 271--367.

\bibitem{Shipman} Shipman, I., {\it A geometric approach to Orlov's theorem}, Compos. Math. 148 (2012), no. 5, 1365--1389.

\bibitem{TV} To\"en, B., Vaqui\'e, M., {\it Moduli of objects in dg-categories}, Ann. Sci. ENS 40 (2007), no. 3, 387--444.

\end{thebibliography}
\end{document}